\documentclass[12pt]{amsart}
\usepackage{amsmath, amssymb, amsthm, bbm}
\usepackage{enumitem}
\usepackage{tikz}
\usepackage{graphicx}
\usepackage{xcolor}
\usepackage[UKenglish]{isodate}
\usepackage{graphicx}
\usepackage[font=small,labelfont=it]{caption}
\usepackage{subcaption}
\usepackage{hyperref}
\usepackage{enumitem}
\usepackage{multirow}
\usepackage{fancyvrb}
\usepackage{geometry}
\usepackage{algorithm}
\usepackage{algpseudocode}
\usepackage{algcompatible}
\usepackage{mathtools}

\mathtoolsset{showonlyrefs}

\newcommand{\m}{\mathbbm{m}}
\newcommand{\cL}{\mathcal{L}}

\renewcommand{\phi}{\varphi}
\newcommand{\eps}{\varepsilon}

\newcommand{\E}{\mathsf{E}}
\newcommand{\N}{\mathbb{N}}
\renewcommand{\P}{\mathsf{P}}

\newcommand{\R}{\mathbb{R}}

\newcommand{\ud}{\mathrm{d}}

\def \cC{\mathcal{C}}

\def \cS{\mathcal{S}}
\def \cH{\mathcal{H}}
\def \cF{\mathcal{F}}

\def \cL{\mathcal{L}}

\def \P{\mathsf P}

\def \E{\mathsf E}

\def \N{\mathbb{N}}
\def \R{\mathbb{R}}

\def \ud{\mathrm{d}}
\def \e{\mathrm{e}}

\newtheorem*{theorem*}{Theorem}
\newtheorem{theorem}{Theorem}[section]

\newtheorem{proposition}[theorem]{Proposition}
\newtheorem{corollary}[theorem]{Corollary}
\newtheorem{remark}[theorem]{Remark}
\newtheorem{lemma}[theorem]{Lemma}

\newtheorem{definition}[theorem]{Definition}

\makeatletter
\@namedef{subjclassname@2020}{\textup{2020}Mathematics Subject Classification} 
\makeatother

\title[Integral equations for MFGs]{Integral equations for the optimal boundary surface \\ of a mean-field game of capacity expansion}\thanks{{\bf Acknowledgements}: T.\ De Angelis was partially supported by EU -- Next Generation EU -- PRIN2022 (2022BEMMLZ) CUP: D53D23005780006 and PRIN-PNRR2022 (P20224TM7Z) CUP: D53D23018780001.}

\subjclass[2020]{Primary: 91A16. Secondary: 93E20, 60G40, 45D05, 65R20.}
\keywords{Mean field games; capacity expansion; singular stochastic control; optimal stopping; free-boundary problems; optimal boundary surface; Volterra integral equations; Itô formula; smooth fit; numerical approximation; Picard iteration.}

\author[De Angelis]{Tiziano De Angelis} 
\author[Livieri]{Giulia Livieri}
\author[Ghio]{Maddalena Ghio} 
\address{T.\ De Angelis: School of Management and Economics, Dept.\ ESOMAS, University of Torino, Corso Unione Sovietica, 218 Bis, 10134, Torino, Italy; Collegio Carlo Alberto, Piazza Arbarello 8, 10122, Torino, Italy.}
\email{\href{mailto:tiziano.deangelis@unito.it}{tiziano.deangelis@unito.it}}
\address{G.\ Livieri: The London School of Economics and Political Science, Dept.\ Statistics, Columbia House, 69 Aldwych, London WC2B 4RR.}
\email{\href{mailto:g.livieri@lse.ac.uk}{g.livieri@lse.ac.uk}}
\date{\today}

\begin{document}
\begin{abstract}
We prove that the optimal boundary surface  that splits the action and inaction regions in a mean-field game of capacity expansion studied in (Campi et al.,\ Ann.\ Appl.\ Probab.,\ {\bf 32}(5),\, pp.\,3674-3717, 2022) is the unique continuous solution of a nonlinear integral equation of Volterra type. In order to do that, we first establish continuity of the optimal surface. Then we develop an extension of It\^o's formula which weakens assumptions required in the existing literature on the first-order time-derivative and/or second-order space derivative of the value function. The paper also provides an algorithm for the numerical solution of the integral equation and we compute optimal controls numerically for the mean-field game. 
\end{abstract}

\maketitle

\newpage

\section{Introduction}

We obtain a nonlinear integral equation of Volterra type for the optimal boundary surface that splits the action and inaction regions in a mean-field game (MFG) of capacity expansion with singular controls (SCs). The MFG was studied by Campi et al.\ in \cite{CDAGLAAP2021}, who showed the existence of the optimal surface but left open questions about its continuity and possible methods for its numerical computation. Our work is mostly theoretical and involves the study of fine properties of the optimal surface and a generalisation of It\^o's formula which we believe to be new. However, we also show that the integral equations we obtain are amenable to numerical solution. Therefore, we are able to compute numerically the optimal control in the MFG, which is not a common occurrence in the literature, especially when dealing with SCs.

The derivation of integral equations is a familiar procedure for optimal stopping (OS) boundaries; cf.\ Peskir and Shiryaev \cite{PS2006} for many solved examples. This is also used in SC, although to a lesser extent, via a well known connection between OS and SC; cf.\ \cite{DeAFF} for an early example in capacity expansion problems. The overarching idea is to obtain a probabilistic expression for the value function $V$ of the stochastic control/stopping problem in which the optimal boundary features explicitly.  Then, in OS, evaluating $V$ at the optimal boundary one obtains an integral equation for the boundary itself; for SC problems instead $V$ must be replaced by a suitable derivative of $V$ (further details in Section \ref{sec::TheIntegralEquation}). The first step requires an application of It\^o's formula, which is generally not straightforward because of a lack of regularity of both $V$ and the optimal boundary.

The approach described in the paragraph above, to our knowledge, has not been explored in the MFG literature and this paper is a first attempt in that direction. In particular, the main technical difficulties we face in transferring ideas from the classical optimal stopping literature into our framework are: (i) lack of sufficient regularity for the value function of the MFG (denoted $v(t,x,y)$ in what follows); (ii) lack of knowledge concerning fine properties of the optimal surface (denoted $b(t,y)$ hereafter). 

Concerning (i), the regularity of the value function can only be addressed in part. From the connection between SC and OS we know that $u\coloneqq\partial_y v$ is the value function of an auxiliary OS problem (which still features the mean-field interaction term from the SC problem). The variable $y$ enters $u(t,x,y)$ as a parameter, in the sense that there is no dynamics associated to that variable. Then, an application of classical It\^o's formula would require continuity of derivatives $\partial_t u$, $\partial_x u$, $\partial^2_{xx} u$. Generalisations of It\^o's formula as in, e.g., \cite{peskir2005change} would require continuity of $\partial_t u$, $\partial_x u$, $\partial^2_{xx} u$ in the inaction/continuation region, plus some control on $\partial^2_{xx} u$ along the boundary $b(t,y)$ (convexity of $u$ in $x$ would suffice). Crucially, this is missing in our context.

As for (ii), one contribution of this paper is to prove that the mapping $(t,y)\mapsto b(t,y)$ is continuous (Theorem \ref{prop:jointlycontinuityb}), which was not known before. The continuity of optimal boundaries as functions of more than one variable is a rather delicate issue. In some cases, PDE theory can be used but not in our case, because the underlying dynamics in the MFG is completely degenerate in the $y$-direction (i.e., no diffusive component). Outside the PDE realm only a few general results are known in this direction (cf.\ Cai et al. \cite{CDeAP}). Continuity is crucial for establishing that the optimal surface $b(t,y)$ is indeed the {\em unique} solution to the integral equation (Theorem \ref{th:UniquenessTheoremIntegralEquation}), which in turn yields some theoretical guarantees for the numerical solution of that equation. 

Going back to (i), we are able to prove that $\partial_x u$ is a continuous function (Proposition \ref{prop:continuityspatialderivative}) but we lack valuable insight into the regularity of $\partial_t u$ and $\partial^2_{xx}u$---the two things are connected, of course. In particular, we only know that $\partial_t u$ and $\partial^2_{xx}u$ exist in the inaction/continuation region as weak derivatives in $L^p$, but we do not control their bounds/integrability across the optimal boundary $b(t,y)$. Thus, we devise a new extension of It\^o's formula
that refines and improves the one presented in Cai and De Angelis \cite{CDAarXiv2022}, where stronger assumptions of continuity of $\partial_t u$ and $\partial_x u$ are required but in a $d$-dimensional setting with any $d\ge 1$.

In summary, our main contributions are: (a) we prove that the optimal  boundary surface in the MFG from \cite{CDAGLAAP2021} is the unique continuous solution of a nonlinear integral equation of Volterra type; (b) we devise a new extension of It\^o's formula which is tailored for applications in OS and SC problems, relaxing assumptions on time-derivative and second-order space-derivative of the value function which are generally difficult to verify in practice; (c) we solve numerically the integral equations and obtain the optimal surface and the optimal control for the MFG.

\subsection{Contribution to the literature}

In this subsection, we discuss our contributions to the literature on generalisations of It\^o's formula and numerical methods for MFGs.

Extensions of It\^o's formula have been the object of intense study for many years. A full review of the subject falls outside of the scope of this introduction and we will instead focus on some papers that propose change-of-variable formulae for specific applications in OS or SC (an interested reader may refer to the Introduction of \cite{CDAarXiv2022} for a broader overview). It is often the case that for applications in stochastic control and OS one wishes to derive change-of-variable formulae that resemble Tanaka's formula as closely as possible. One reason is that the so-called smooth-fit condition (i.e., the continuity of spatial derivatives across the boundary) is used to select the optimal boundary; as a result the local-time terms in Tanaka's formula vanish for the correct ``candidate'' optimal boundary. A widely used formula in this spirit is due to Peskir \cite{peskir2005change} (see also \cite{Pe07} for multi-dimensional versions). As mentioned above, Peskir's formula requires continuity of derivatives in the continuation/inaction set plus some knowledge about spatial convexity/concavity of the value function (or a control on the second order spatial derivative at the boundary). Elworthy et al.\ \cite{ETZ} obtain change-of-variable formulae of a similar kind but under the assumption that left-derivatives in time and space of the value function be of bounded variation. Popular in stochastic control is also It\^o-Krylov formula (\cite[Ch.\ 2, Sec.\ 10]{Kr}), which is tailored for solutions of Hamilton-Jacobi-Bellman equations. It requires that the value function belongs to a Sobolev space with time/space gradient and spatial Hessian matrix whose all entries are functions in $L^p$ for suitable $p>0$. None of the key assumptions in the papers mentioned here are satisfied in our setting. Thus, we devise a change-of-variable formula that complements existing ones and may be useful in other problems of stochastic control. We also refer the reader to Remark \ref{rem:cvf} below for a detailed technical comparison between our result and the one obtained in \cite{CDAarXiv2022}. 

Numerical methods play a key role in a wide range of applications of MFGs, especially because most models do not have explicit or semi-explicit solutions. A review of several aspects concerning numerical methods for MFGs and mean field controls with regular controls\footnote{By regular controls we mean those having a bounded impact on the velocity of the underlying dynamics.} is provided by Lauriere \cite{LAMS2021}. Numerical methods for MFGs with singular controls are less studied and present additional difficulties due to unbounded changes in the velocity of the underlying process and possible discontinuities in the state trajectories. In particular, to our knowledge, existing works have mainly focused on proving existence of solutions for general classes of MFGs involving singular controls (e.g., \cite{DFFNMOR2023, FSICON2017, FHSICON2017}), on establishing a relation between MFGs of singular controls and the associated $N$-player games (e.g., \cite{CGLNACO2023,CGSPA2022}) and on developing a probabilistic framework for extended MFGs of singular controls (e.g., \cite{DH2025SICON,FSICON2017}). Nonetheless, since the seminal paper by Cardaliaguet and Hadikhanloo \cite{CH2017ESAIM}, which focuses on regular controls, only a limited number of papers have tackled the convergence of learning procedures for MFGs involving singular controls---specifically, iterative schemes that provide a constructive solution method. These include the study of Dianetti et al. \cite{DFFNMOR2023} based on submodularity of the game's structure and the use of Tarski's fixed point theorem, the work by Campi et al. \cite{CDAGLAAP2021} which leverages the connection between OS and SC and is similar in spirit to the construction used in Gu{\'e}ant \cite{GMMMAS2012} in an analytical study of MFG equations
with quadratic Hamiltonian, and the very recent work of Dianetti et al. \cite{dianetti2025entropy} where the authors establish the convergence of the fictitious play scheme for MFGs arising from SC problems associated with stochastic optimal control models. In particular, the numerical experiments carried out in Section \ref{sec:numericalexperiments} of this paper build on the learning procedure proposed in Campi et al. \cite{CDAGLAAP2021}.

\subsection{Structure of the paper} The paper is organised as follows. In Section \ref{sec:model} we recall the setting and main results of the MFG studied by Campi et al.\ \cite{CDAGLAAP2021}. We collect there only the essential facts that are necessary for our own analysis. In Section \ref{sec:properties_vf_and_boundaries} we prove new results concerning the value function and the optimal boundary of the MFG. In particular, we prove continuity of the function $\partial_x u$ in Proposition \ref{prop:continuityspatialderivative} and continuity of the boundary $b(t,y)$ in Theorem \ref{prop:jointlycontinuityb}. In Section \ref{sec::TheIntegralEquation} we obtain the main result of the paper, i.e., the characterisation of $b(t,y)$ as the unique solution of an integral equation (Theorem \ref{prop:Integralequation}). This is where we develop our main technical contribution, with a suitable extension of It\^o's formula. Finally, in Section \ref{sec:numericalexperiments} we present in detail a numerical algorithm for the solution of the integral equation. We also present numerous plots of the optimal surface, of the optimal control and of the overall efficiency of the algorithm.

\section{Model description and summary of known results}\label{sec:model}
In this section, we review the model of \textit{mean-field games of finite fuel capacity expansion with singular controls} proposed in \cite{CDAGLAAP2021}. 
Let $\Pi=(\Omega, \mathcal{F}, \mathbb{F}=(\mathcal{F}_t)_{t\geq 0}, \bar{\P})$ be a filtered probability space satisfying the usual conditions and supporting a one-dimensional $\mathbb{F}$-Brownian motion $W$; the initial $\sigma$-field $\mathcal{F}_0$ is not necessarily trivial. Let $\Sigma:=\R\times[0,1]$ and $\Sigma^\circ:=\R\times(0,1)$. We denote by $\mathcal P(\Sigma)$ the class of probability measures on $\Sigma$ and by $\mathcal P_2(\Sigma)$ the subclass of probability measures with finite second moment. Let $(X_0, Y_{0-})$ be a two-dimensional $\mathcal{F}_0$-measurable random variable with joint law $\nu\in\mathcal{P}(\Sigma)$ independent of the Brownian motion, and let $\xi\in\Xi^{\Pi}(Y_{0-})$ be an admissible strategy, where for a $\mathcal{F}_0$-measurable random variable $Z \in [0,1]$ the set $\Xi^\Pi(Z)$ is defined as 
\begin{equation*}
\begin{aligned}
    \Xi^\Pi(Z):= \big \{ \xi:\:&\text{$(\xi_t)_{t\ge 0}$ is $\mathbb F$-adapted, non-decreasing,  right-continuous,}\nonumber\\
    &\text{ with $\xi_{0-}=0$ and $Z+\xi_t\in [0,1]$ for all $t\in[0,T]$, $\bar{\P}$-a.s.} \big \}.
    \end{aligned}
\end{equation*}
Then, given a bounded Borel measurable function $m : [0,T]\mapsto [0,1]$, for $t \in [0,T]$ we define the two-dimensional, degenerate, controlled dynamics:
\begin{equation}\label{eq:Xdynamics}
\begin{aligned}
    X_t         &= X_0 + \int_{0}^{t}a(X_s, m(s))\,\ud s + \int_{0}^{t}\sigma(X_s)\,\ud W_s,\\
    Y_t^{\xi}   &= Y_{0-}+\xi_t,
\end{aligned}
\end{equation}
where functions $a(\cdot)$ and $\sigma(\cdot)$ will be specified later. 

The goal of the ``representative player" consists of maximizing over the set of all admissible strategies $\xi \in \Xi^{\Pi}(Y_{0-})$ the following objective functional
\begin{equation}\label{eq:objfunctional}
    \mathcal{J}(\xi) = \bar{\E}\left[\int_{0}^{T}\e^{-rt}f(X_t, Y_t^{\xi})\,\ud t-\int_{[0,T]}\e^{-r t} c_0\,\ud\xi_t\right],
\end{equation}
where $\bar{\E}[\cdot]$ is the expectation under $\bar{\P}$, $f$ is a running payoff function which we specify later, $c_0>0$ is a cost, and $r \geq 0$ a discount rate. 
The integral with respect to the positive random measure $\ud \xi$ includes possible atoms at the initial and terminal time (corresponding to jumps of $\xi$). The value of the optimization problem for the representative player is denoted by
\begin{equation*}
    V^{\nu} = \sup_{\xi \in \Xi(Y_{0-})} J(\xi).
\end{equation*}
Now we recall the definition of solution for the MFG of capacity expansion, provided in \cite{CDAGLAAP2021}.
\begin{definition}[Solution of the MFG of capacity expansion]\label{def:MFGsol} 
A solution of the MFG of capacity expansion with initial condition $(X_0,Y_{0-})\sim\nu\in\mathcal P_2(\Sigma)$ is a pair $(m^* , \xi^*)$ with $m^* :[0,T]\rightarrow[0,1]$ a measurable function and $\xi^* \in \Xi(Y_{0-})$ such that:
\begin{enumerate}
\item[(i)] {\em(Optimality property)} $\xi^*$ is optimal, i.e.,
\begin{equation}
J(\xi^* )=V^{\nu}=\sup_{\xi \in\Xi}\bar{\mathsf{E}}\left[\int_0^{T}\e^{-rt}f(X^*_t,Y^\xi_t)\,\ud t-\int_{[0,T]} \e^{-rt}c_0\,\ud \xi_t\right],
\nonumber
\end{equation}
where $(X^*,Y^\xi)$ is a solution of  \eqref{eq:Xdynamics} associated to $(m^*,\xi)$.
\item[(ii)] {\em(Mean-field property)} Letting $(X^* , Y^* )$ be the solution of \eqref{eq:Xdynamics} associated to $(m^*, \xi^*)$, the consistency condition holds:
\[ 
m^* (t)=\bar{\mathsf{E}}[Y^*_t],\quad\text{for each $t\in[0,T]$.}
\] 
\end{enumerate}
We say that a solution $\xi^*$ of the MFG is in \emph{feedback form} if $\xi^*_t=\eta(t,X^*,Y_{0-})$, $t\in[0,T]$, for some non-anticipative measurable mapping 
\[
\eta:[0,T]\times C([0,T]; \mathbb R)\times[0,1] \to [0,1]
\]
(i.e., such that $\eta(t,X^*,Y_{0-})=\eta(t,X^*_{\cdot\wedge t},Y_{0-})$).
\label{def:solMFG}
\end{definition}
For the solution of the MFG it is convenient to embed the problem in a Markovian framework. Let us set
\begin{equation*}
    \P_{0, x, y}(\,\cdot\,):=\bar{\P}(\,\cdot\,|X_0=x, Y_{0-}=y),
\end{equation*}
and assume that the mapping $(x,y)\mapsto \P_{0, x, y}(A)$ is measurable for any $A\in\cF$.
Since $(X_0, Y_{0-})\overset{d}{\sim} \nu$ then
\begin{equation*}
    \bar{\P}(\,\cdot\,)=\int_{\Sigma}\P_{0, x, y}(\,\cdot\,)\nu(\ud x, \ud y)\quad\quad\text{and}\quad\quad\bar{\E}[\,\cdot\,]=\int_{\Sigma} \mathsf{E}_{0, x, y}[\,\cdot\,]\nu(\ud x, \ud y).
\end{equation*}
The dynamics in \eqref{eq:Xdynamics} conditional upon the initial data $(X_t,Y^\xi_{t-})=(x, y)\in \Sigma$ at time $t\in[0,T]$ reads
\begin{equation}\label{eq:XYdynamicsnew}
\begin{aligned}
    X_{t+s} &= x + \int_{0}^{s}a(X_{t+u}, m(t+u))\,\ud u + \int_{0}^{s}\sigma(X_{t+u})\,\ud W_{t+u}\\
    Y_{t+s}^{\xi}   &= y+(\xi_{t+s}-\xi_{t-}),\quad s\in[0,T-t],
\end{aligned}
\end{equation}
where we notice that $\ud W_{t+u}=\ud (W_{t+u}-W_{t})$.
To keep track of the initial data, we use the notation $(X^{t,x}_{t+s},Y^{t,x,y;\xi}_{t+s})_{s\in[0,T-t]}$ where the $Y$-dynamics may depend on $x$ via the control $\xi$. 
For the process started at time zero (i.e., $t=0$), we use the simpler notation $(X_s^x, Y_s^{x,y;\xi})_{s \in [0,T]}$.

Consistently  with the notation introduced so far, we use $\P_{t, x, y}(\cdot) = \bar{\P}(\,\cdot\, | X_t=x, Y_{t-}=y)$ for simplicity,
and $\P_{x,y}=\P_{0,x,y}$ for the special case $t=0$. When no confusion arises we drop the subscript from $\P_{t, x, y}$ and $\E_{t, x, y}$ and simply use $\P$ and $\E$. 

For each initial condition $(t,x,y)\in  [0,T]\times\Sigma$ we introduce the optimization problem. 
\begin{equation}\label{eq:P0}
\begin{split}
v(t,x,y) &:= \sup_{\xi \in \Xi_{t,x}(y)}\,J(t,x,y ; \xi)\quad\text{with}\\
J(t,x, y; \xi) &:= \E_{t,x,y}\left[\int_{0}^{T-t}\!\! \e^{- r s} f(X_{t+s}, Y_{t+s}^{\xi})\,\ud s -\, \int_{[0,T-t]}  \e^{-r s} c_0 \ud\xi_{t+s}\right],
\end{split}
\end{equation}
where the set $\Xi_{t,x}(y)$ of admissible controls is defined by 
\begin{equation}\label{eq:Xitxy}
\begin{aligned}
&\Xi_{t,x}(y):= \big \{ \xi:\: \text{$\xi_{u}=0$ for $u \in [0,t)$,}\\
&\qquad\qquad\qquad\:\text{$(\xi_{t+s})_{s\ge 0}$ is $(\mathcal F_{t+s})_{s\ge 0}$-adapted, non-decreasing, right-continuous,}\\
&\qquad\qquad\qquad\:\:\text{with $y+\xi_{t+s}\in [0,1]$ for all $s\in[0,T-t]$, $\P_{t,x,y}$-a.s.} \big \}.
\end{aligned}
\end{equation}

The link between the problem above and the original MFG was established in \cite{CDAGLAAP2021} and it reads as follows:
\[
V^\nu=\int_\Sigma v(0,x,y)\nu(\ud x,\ud y).
\]
Moreover, letting
$\mu_t^{x,y;\xi}:=\mathcal{L}(X_t^{x}, Y_t^{x, y;\xi})\in\mathcal{P}(\Sigma)$ be the law of the pair $(X_t^{x}, Y_{t}^{x, y; \xi})$ for an arbitrary control $\xi$, the consistency condition between $m(t)$ and $Y^\xi$ can be expressed as
\begin{equation*}
    m(t) = \int_{\Sigma}\mathsf{E}_{x,y}[Y_t^{\xi}]\nu(\,\ud x,\,\ud y)=\int_{\Sigma}\int_{\Sigma}y'\mu_t^{x,y;\xi}(\,\ud x',\,\ud y')\nu(\,\ud x,\,\ud y),\quad t\in[0,T].
\end{equation*}

\subsection{Assumptions and solution of the MFG of capacity expansion.}\label{subsec::assumptions}
We now introduce our assumptions. We first give assumptions on the coefficients of the SDE (see \ref{itm:A1}) and on the profit function (see \ref{itm:A2}). Then, we impose some integrability assumptions (see \ref{itm:A3}).
\begin{enumerate}[label=(A\arabic*)]
\item\label{itm:A1} ({\bf Dynamics}) The functions $a\,:\,\Sigma\rightarrow\mathbb{R}$ and $\sigma\,:\,\mathbb{R}\rightarrow(0,\infty)$ satisfy the following conditions:
\begin{itemize}
    \item[(\,i\,)] There is $L(a)>0$ such that
    \[
    \big|a(x,m)-a(x',m')\big|\le L(a)\big(|x-x'|+|m-m'|\big), 
    \]
    for $x,x'\in\R$ and $m,m'\in[0,1]$. Moreover,
$x\mapsto a(x,m)\in C^{1}(\mathbb{R})$ for each $m\in[0,1]$.
\item[(\,ii\,)] $\sigma(\cdot)\in C^1(\R)$ with $|\partial_{x}\sigma(x)|\le L(\sigma)$ for some $L(\sigma)>0$.
    \item[(\,iii\,)] The mapping $m \mapsto a(x, m)$ is non-decreasing on $[0, 1]$ for all $x \in \mathbb{R}$.
\end{itemize}
\end{enumerate}

Assumptions \ref{itm:A1}-(i) and \ref{itm:A1}-(ii) guarantee existence and uniqueness of a strong solution to \eqref{eq:XYdynamicsnew} which induces a $\P$-a.s.\ continuous flow 
$(t, x, s) \mapsto X^{t,x}_{t+s}$ (e.g., \cite{KS1988}, pp.~397--398, or \cite{B2017}, Theorem 9.9). Moreover, the spatial regularity of $a(\cdot, m)$ and $\sigma(\cdot)$ ensures that the stochastic flow $x \mapsto X^{t,x}$ is continuously differentiable with $Z^{t,x}:=\partial_x X^{t,x}$ given by (see \cite{PP1990}, Chapter V.7)
\begin{equation*}
\begin{aligned}
Z^{t,x}_{t+s}&=\exp\left(\int_{0}^{s}\Big(\partial_x a(X_{t+u}^{t, x}, m(t+u))-\tfrac12[\partial_x\sigma(X^{t,x}_{t+u})]^2\Big)\,\ud u+\int_0^s\partial_x\sigma(X^{t,x}_{t+u})\ud W_{t+u} \right)\\
&=:\exp\left(\int_{0}^{s}\partial_x a(X_{t+u}^{t, x}, m(t+u))\,\ud u\right)\mathcal{M}^{t,x}_{s},
\end{aligned}
\end{equation*}
where $\mathcal{M}^{tx}_{s}$ is a true martingale thanks to boundedness of $\partial_x \sigma$. 
It is clear that $(t, x,s) \mapsto Z^{t,x}_s$ is a continuous flow by continuity of the flow $(t, x,s) \mapsto X^{t,x}_s$. 
Finally, \ref{itm:A1}-(iii) is a technical assumption required by \cite{CDAGLAAP2021}
in the construction of the optimal control in the MFG. 
\begin{enumerate}[label=(A\arabic*), start=2]
   \item\label{itm:A2} ({\bf Running cost}) We have $f \in C^{2}(\Sigma^{\circ})$ and either $\sigma(x) = \sigma$ is constant or $x \mapsto \partial_{x y}f(x, y)$ non-increasing. Furthermore, we assume
\begin{itemize}
    \item[(\,i\,)] \textit{Monotonicity:} $x\mapsto f(x,y)$, $y \mapsto f(x,y)$ are increasing and $\partial_{x y} f > 0$ on $\Sigma^\circ$ with
    \begin{equation*}
        \lim_{x\rightarrow -\infty}\partial_{y} f(x,y)<r c_0< \lim_{x\rightarrow +\infty}\partial_{y} f(x,y),\quad\forall y\in[0,1].
    \end{equation*}
    \item[(\,ii\,)] \textit{Concavity:} $y\mapsto f(x,y)$ is strictly concave $\forall x \in \mathbb{R}$.
    \end{itemize}
\smallskip

\item\label{itm:A3} ({\bf Integrability}) The following integrability conditions hold:
\begin{itemize}
    \item[(\,i\,)] There exists $p>1$ such that, given any Borel measurable $m : [0,T]\rightarrow [0,1]$ and letting $X$ be the associated solution of the SDE \eqref{eq:Xdynamics} we have
\begin{equation*}
    \mathsf{E}_{t, x, y}\left[\int_{0}^{T-t}\e^{-r s}\left(|f(X_{t+s}, y)|^{p}+|\partial_y f(X_{t+s}, y)|^{p}\right)\,\ud s\right]<\infty,
\end{equation*}
for all $(t, x, y)\in [0,T]\times \mathbb{R} \times [0,1]$.
    \item[(\,ii\,)] For any compact $K \subset [0, T] \times \Sigma^{\circ}$,
    \begin{equation*}
        \sup_{(t, x, y) \in K} \mathsf{E}_{t, x}\left[\int_{0}^{T-t} \e^{-r s} \left(|\partial_{yy} f(X_{t+s}, y)|+(1+Z_{t+s})|\partial_{x y} f(X_{t+s},y)|\right)\ud s\right] < \infty.
    \end{equation*}
\end{itemize}
\end{enumerate}
The set of assumptions in \ref{itm:A2} is in line with the literature on irreversible investment and it is fulfilled, for example, by profit functions of Cobb-Douglas type (i.e., $f(x,y)=x^{\alpha}y^{\beta}$ with $\alpha \in [0,1]$, $\beta \in (0,1)$ and $x > 0$). The integrability condition in \ref{itm:A3}-(i) guarantees that the problem is well posed and will allow us the use of the dominated convergence theorem in some of the technical steps of the proofs. 
\vspace{0.1cm}

\subsection{Solution of the MFG of capacity expansion}\label{sec:solution}
 
In this section we review the main result from \cite{CDAGLAAP2021} (cf.~\cite{CDAGLAAP2021}, Theorem 2.5) and some of the steps in its proof, which we are going to use in the present paper. Our Assumption \ref{itm:A1} is more restrictive than what is required by \cite{CDAGLAAP2021}, but for simplicity we state the theorem under such stronger assumption.

\begin{theorem}[Solution of the MFG of capacity expansion]\label{teo:existenceSolMFG} Suppose Assumptions \ref{itm:A1}, \ref{itm:A2}, and \ref{itm:A3}-(i) hold. 
Then, there exists an upper-semicontinuous function $c: [0,T] \times \R \to [0,1]$, with $t\mapsto c(t,x)$ and $x\mapsto c(t,x)$ both non-decreasing, such that the pair $(m^*, \xi^*)$ with
\[ \xi_t ^* := \sup_{0\le s\le t} (c(s,X^* _s)-Y_{0-})^+, \quad m^* (t) := \int_\Sigma \E_{x,y} \left[Y_t ^* \right] \nu(\ud x,\,\ud y), \quad t \in [0,T],\]
is a solution of the MFG as in Definition \ref{def:solMFG}.
\end{theorem}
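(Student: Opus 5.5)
The plan is to follow the construction of \cite{CDAGLAAP2021}: reduce the MFG to a fixed-point problem in the mean-field term $m$. For fixed $m$ the control problem is solved through optimal stopping, and the fixed point is then obtained from a monotone (Tarski-type) iteration.

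\textbf{Step 1 (fixed $m$, link to optimal stopping).} Fix a measurable $m:[0,T]\to[0,1]$ and let $X=X^m$ solve \eqref{eq:Xdynamics}. For each level $y\in[0,1]$, consider the stopping problem
\[
u^m(t,x,y)=\sup_{\tau\in[0,T-t]}\E_{t,x}\Big[\int_0^\tau \e^{-rs}\partial_y f(X_{t+s},y)\,\ud s - \e^{-r\tau}c_0\Big].
\]
By \ref{itm:A3}-(i) it is well posed. Concavity in \ref{itm:A2}-(ii) makes $y\mapsto u^m$ non-increasing, and $\partial_{xy}f>0$ makes $x\mapsto u^m$ non-decreasing. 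The stopping region $\{u^m=-c_0\}$ is therefore described by a boundary $b^m(t,y)$ that is non-increasing in $y$ (via the comparison principle and the time-homogeneity structure). The limits in \ref{itm:A2}-(i) ensure that $b^m$ is finite.

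Define the generalised inverse $c^m(t,x):=\sup\{y\in[0,1]: x\ge b^m(t,y)\}$ (with $\sup\emptyset=0$). Monotonicity of $b^m$ gives that $c^m$ is non-decreasing in $x$ and upper-semicontinuous, because the stopping set is closed. Monotonicity in $t$ I would obtain from the time-monotonicity of $b^m$.

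Next, use the standard link between singular control and stopping (\cite{DeAFF}): the reflection policy $\xi^m_t=\sup_{s\le t}(c^m(s,X_s)-y)^+$ is optimal, and $\partial_y v^m=u^m$. This is verified by integrating $\partial_y$ of the payoff over $y$ and using, for each level $y$, the optimality of the hitting time of $\{x\ge b^m(t,y)\}$.

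\textbf{Step 2 (monotonicity in $m$).} Assumption \ref{itm:A1}-(iii) and comparison for SDEs give $X^{m}\le X^{m'}$ when $m\le m'$. Hence $u^m\le u^{m'}$, so $b^{m'}\le b^m$ and $c^m\le c^{m'}$. As a result, the map
\[
\Phi(m)(t):=\int_\Sigma\E_{x,y}[Y^{m}_t]\,\nu(\ud x,\ud y)
\]
is non-decreasing on the complete lattice of measurable $m:[0,T]\to[0,1]$ under the pointwise order.

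\textbf{Step 3 (fixed point).} Tarski's theorem gives a fixed point $m^*$. Constructively, one iterates from $m_0\equiv0$ and passes to the limit of the increasing sequence $m_n\uparrow m^*$. Setting $c=c^{m^*}$ and $\xi^*=\xi^{m^*}$, the pair $(m^*,\xi^*)$ satisfies both (i) and (ii) of Definition \ref{def:MFGsol}.

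\textbf{Main obstacle.} I expect the hardest part to be Step 1: proving rigorously that the reflection policy at $c^m$ is optimal when $c^m$ is only upper-semicontinuous and $X$ has state-dependent coefficients. This needs a careful pathwise argument over the levels $y$, identifying $\{\xi^m_t> z-y\}$ with $\{\tau^z\le t\}$ for the hitting times $\tau^z$ of the stopping sets. I would try the layer-cake representation $\int_{[0,T]}\e^{-rt}\ud\xi_t=\int_y^1 \e^{-r\tau^z}\,\ud z$ first.
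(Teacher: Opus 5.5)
The architecture (freeze $m$, solve the control problem through a stopping problem, show $m\mapsto\Phi(m)$ is monotone, take a lattice fixed point) is viable, but Step~1 is wrong as written. Your stopping problem $\sup_\tau\E_{t,x}\bigl[\int_0^\tau\e^{-rs}\partial_yf(X_{t+s},y)\,\ud s-\e^{-r\tau}c_0\bigr]$ is degenerate. Since $\partial_yf>0$ on $\Sigma^\circ$ and $r\ge0$, the quantity inside the expectation is pathwise non-decreasing in $\tau$. Hence $\tau=T-t$ is always optimal and $\{u^m=-c_0\}$ contains no point with $t<T$. Your $c^m$ is then identically $0$, and the resulting ``reflection'' policy is never to invest, which is not optimal in general. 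The marginal value of capacity is the value of a \emph{minimisation} problem, $u^m=\inf_\tau\E_{t,x}\bigl[\int_0^\tau\e^{-rs}\partial_yf(X_{t+s},y)\,\ud s+c_0\e^{-r\tau}\bigr]\in[0,c_0]$ as in \eqref{eq:un}, with stopping set $\{u^m=c_0\}$. Because $u^m$ is non-decreasing in $x$ and non-increasing in $y$, this set is $\{x\ge b^m(t,y)\}$ with $b^m$ \emph{non-decreasing} in $y$ (cf.\ \eqref{eq:SSn}). If $b^m$ were non-increasing in $y$, as you state, your $c^m(t,x)=\sup\{y:x\ge b^m(t,y)\}$ could only take the values $0$ and $1$. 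Once the signs are fixed, the rest of Step~1 matches the paper's treatment of $\textbf{SC}^{[n]}_{t,x,y}$ and $\textbf{OS}^{[n]}_{t,x,y}$ for frozen $m^{[n-1]}$: levelwise hitting times, $\partial_yv^m=u^m$, and upper semicontinuity of $c^m$ from closedness of the stopping set.

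For the equilibrium step you depart from the paper. Following \cite{CDAGLAAP2021}, the paper iterates $m^{[n-1]}\mapsto m^{[n]}$ from the top element $m^{[-1]}\equiv1$ and proves convergence; you invoke Tarski, as in \cite{DFFNMOR2023}. That gives existence without any continuity of $\Phi$, but two repairs are needed. First, Borel functions $[0,T]\to[0,1]$ under the pointwise order are not a complete lattice: for non-measurable $A$, the family $\{\mathsf{1}_{\{a\}}\}_{a\in A}$ has no least measurable upper bound. Work instead on non-decreasing $m$. That lattice is complete, and $\Phi$ maps it into itself because $t\mapsto Y^m_t$ is non-decreasing. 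You need this restriction anyway for monotonicity of $t\mapsto c^m(t,x)$. That monotonicity comes from a later start seeing a larger drift $a(x,m(t+\cdot))$ and a shorter horizon (cf.\ Proposition~\ref{prop:properties_vf_1}-(iii)), and it fails for general measurable $m$. Second, your ``constructive'' iteration from $m_0\equiv0$ is unjustified. Tarski says nothing about Kleene iterates without order-continuity of $\Phi$, and from below there is a concrete obstruction. Even if $u^{m_n}\uparrow u^{m_\infty}$, the union $\bigcup_n\{u^{m_n}=c_0\}$ can be strictly smaller than $\{u^{m_\infty}=c_0\}$, so $\lim_nc^{m_n}$ need be neither upper semicontinuous nor equal to $c^{m_\infty}$. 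From the top this cannot happen. Since $u_n\le c_0$ and $u_n\downarrow u$, one has exactly $\{u=c_0\}=\bigcap_n\{u_n=c_0\}$. The decreasing limit of the usc functions $c_n$ is usc, and $\sup_{s\le t}$ commutes with decreasing limits of usc functions. The approximating sequence $(u_n,b_n,m^{[n]})$ obtained this way is also what the rest of the paper relies on (Corollary~\ref{cor:unifconv}, Corollary~\ref{cor:inteq} and the numerical scheme), and Tarski alone does not supply it.
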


For the proof of the theorem, \cite{CDAGLAAP2021} propose an iterative scheme which is shown to converge to the MFG solution.  
The scheme is initialized by setting $m^{[-1]}(t) \equiv 1$ for $t \in [0,T]$. At the $n$-th step, given a non-decreasing and right-continuous function $m^{[n-1]}:[0,T]\rightarrow [0,1]$ the following dynamics is introduced
\begin{equation}\label{eq:dynamicsn}
\begin{aligned}
   X_{t+s}^{[n]; t, x} &= x + \int_{0}^{s}a(X_{t+u}^{[n]; t, x}, m^{[n-1]}(t+u))\,\ud u + \int_{0}^{s}\sigma(X_{t+u}^{[n]; t, x})\,\ud W_{t+u} \\
   Y_{t+s}^{[n]; t, x,y} &= y + \xi_{t+s},
\end{aligned}
\end{equation}
for $(x,y)\in\Sigma$, $s \in [0,T-t]$, $t \in [0,T]$ and $\xi \in \Xi_{t, x}(y)$. 
Then, the authors of \cite{CDAGLAAP2021} study the following singular control problem (indicated by $\textbf{SC}^{[n]}_{t,x,y}$): 
\begin{equation}
\begin{aligned}
v_n(t,x,y):= & \sup_{\xi\in\Xi_{t,x}(y)}J_n(t,x,y;\xi)\qquad\text{with}\label{SCn-1}\\
J_n(t,x,y;\xi):= &\,\E_{t,x,y}\Big[\int_0^{T-t}\e^{-rs}f(X^{[n]}_{t+s},y + \xi_{t+s})\,\ud s-\int_{[0,T-t]}\e^{-rs}c_0\,\ud \xi_{t+s}\Big].
\end{aligned}
\end{equation}
It is shown that $\partial_y v_n(t,x,y)=u_n(t,x,y)$ with $u_n$ the value function of the following optimal stopping problem (indicated by $\textbf{OS}^{[n]}_{t,x,y}$):
\begin{equation}\label{eq:un}
\begin{aligned}
u_n(t,x,y):= & \inf_{\tau\in\mathcal{T}_t}U_n(t,x,y;\tau)\qquad\text{with}\\
U_n(t,x,y;\tau):= &\,\E_{t,x}\left[\int_0^{\tau}\e^{-rs}\partial_yf(X_{t+s}^{[n]},y)\,\ud s+c_0\e^{-r\tau}\right],\quad\text{for $\tau\in\mathcal{T}_t$},
\end{aligned}
\end{equation}
where $\mathcal{T}_t$ is the set of stopping times for the filtration generated by $(W_{t+s}-W_t)_{s\geq 0}$, 
with values in $[0, T-t]$. 
The associated continuation region, $\mathcal{C}^{[n]}$, and stopping region, $\mathcal{S}^{[n]}$, read
\begin{equation}
\begin{aligned}
\mathcal{C}^{[n]}&:= \lbrace (t,x,y) \in [0,T] \times \Sigma\,:\,u_n(t,x,y)<c_0 \rbrace,\\
\mathcal{S}^{[n]}&:= \lbrace (t,x,y) \in [0,T] \times \Sigma\,:\,u_n(t,x,y)=c_0 \rbrace.
\end{aligned}
\end{equation}
Then, the minimal optimal stopping time for ${\bf OS}^{[n]}_{t,x,y}$ is found to be
\begin{equation}\label{eq:stoppingtime}
\begin{aligned}
    \tau_{*}^{[n]}(t, x, y)&=\inf\{s \in [0,T-t]\,:\,u_n(t+s, X_{t+s}^{[n]},y)= c_0\}\\
    &=\inf\{s \in [0,T-t]\,:\,c_n(t+s, X_{t+s}^{[n]})\ge y\},
\end{aligned}
\end{equation}
for a function $(t,x)\mapsto c_n(t,x)$ which is upper semi-continous with $t\mapsto c_n(t,x)$ and $x\mapsto c_n(t,x)$ right-continuous and non-decreasing. It is then shown that the optimal control for $\textbf{SC}^{[n]}_{t,x,y}$ reads
\[
\xi_{t+s} ^{[n]*} := \sup_{0\le u\le s} (c(t+u,X^{[n]}_{t+u})-Y_{0-})^+.
\]
For $t=0$, setting $Y^{[n]*}_s:=y+\xi^{[n]*}_s$ one defines
\[
m^{[n]}(s):=\int_\Sigma\E_{x,y}\left[Y^{[n]*}_s\right]\nu(\,\ud x,\,\ud y).
\] 
The map $s\mapsto m^{[n]}(s)$ is non-decreasing and right-continuous (by dominated convergence) with values in $[0,1]$, so it can be used to define processes $(X^{[n+1]}, Y^{[n+1]})$ and the value function $v_{n+1}$ of problem $\textbf{SC}^{[n+1]}_{t,x,y}$ by iterating the above construction.

It is shown in \cite{CDAGLAAP2021} that the sequence $(u_n)_{n\ge 0}$ is decreasing and it converges to a {\em continuous} function $u$. Likewise, they show that the sequence $(c_n)_{n\ge 0}$ is decreasing and it converges to a function $c$. Using these facts they then prove that the iterative scheme converges to the solution of the MFG, in the sense that $(X^{[n]},Y^{[n]*},m^{[n]})$ converges pointwise to $(X^*,Y^*,m^*)$ from Definition \ref{def:solMFG}. In the limiting procedure they also prove that $(Y^*,m^*)$ are expressed as in Theorem \ref{teo:existenceSolMFG}. An ancillary result concerns the pair $(u,c)$: $u(t,x,y)$ is the value function of an optimal stopping problem of the form as in \eqref{eq:un} but with $X^{[n]}$ therein replaced by $X^*$; the function $c(t,x)$ is the optimal stopping boundary for such problem.

The main object of interest in this paper is the boundary $c(t,x)$ that determines the optimal actions of the representative agent in the MFG. Such boundary arises from the study of the optimal stopping problems ${\bf OS}^{[n]}_{t,x,y}$ and therefore it is convenient to recall a few facts obtained in \cite{CDAGLAAP2021}, concerning the value functions $u_n$. Again, the results in \cite{CDAGLAAP2021} are proven under slightly more general assumptions than \ref{itm:A1}--\ref{itm:A2} but for simplicity we restrict to our setup.
\begin{proposition}\label{prop:properties_vf_1}
    Let Assumptions \ref{itm:A1}, 
        \ref{itm:A2}, and \ref{itm:A3}-(i) hold. Then the value function of the optimal stopping problem {\em $\textbf{OS}_{t,x,y}^{[n]}$} has the following properties:
    \begin{itemize}
        \item[(i)] $0\leq u_n(t, x, y) \leq c_0$ for $(t,x,y)\in[0,T]\times\Sigma$; 
        \item[(ii)] The map $x\mapsto u_n(t, x, y)$ is non-decreasing for each fixed $(t,y)\in [0,T]\times[0,1]$ and $y\mapsto u_n(t, x, y)$ is non-increasing for each $(t,x)\in[0,T]\times\mathbb{R}$;
        \item[(iii)] The map $t\mapsto u_n(t, x, y)$ is non-decreasing for each fixed $(x,y)\in\Sigma$;
        \item[(iv)] We have $u_n \in C([0,T]\times\Sigma')$ with $\Sigma'=\R\times(0,1]$. 
    \end{itemize}
\end{proposition}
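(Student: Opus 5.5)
Each of the four properties follows from the representation \eqref{eq:un} of $u_n$ as the value of an optimal stopping problem, using only comparison arguments on $U_n$ together with the structural assumptions in \ref{itm:A1}--\ref{itm:A2}. Throughout, the drift coefficient $m^{[n-1]}$ is a fixed deterministic non-decreasing function, so $X^{[n]}$ is a time-inhomogeneous diffusion with a strong solution and a continuous flow. This is the only place where the time dependence enters.

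For (i), taking $\tau=0$ gives $u_n\le c_0$. For the lower bound, note that $\partial_y f\ge 0$ because $y\mapsto f(x,y)$ is increasing by \ref{itm:A2}-(i). Hence $U_n(t,x,y;\tau)\ge c_0\e^{-r\tau}\ge 0$ for every $\tau$, and so $u_n\ge 0$. Integrability of the running term is guaranteed by \ref{itm:A3}-(i).

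For (ii), monotonicity in $y$ follows directly from strict concavity (\ref{itm:A2}-(ii)). Indeed, $y\mapsto\partial_y f(x,y)$ is non-increasing, so $U_n(t,x,y;\tau)$ is non-increasing in $y$ for every fixed $\tau$, and taking the infimum preserves this. For monotonicity in $x$, I use the comparison principle for one-dimensional SDEs with Lipschitz coefficients: if $x\le x'$, then $X^{[n];t,x}_{t+s}\le X^{[n];t,x'}_{t+s}$ for all $s$ a.s. (pathwise uniqueness via Yamada--Watanabe/Ikeda--Watanabe comparison). Since $\partial_{xy}f>0$, the map $x\mapsto \partial_yf(x,y)$ is increasing. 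Therefore $U_n(t,x,y;\tau)\le U_n(t,x',y;\tau)$ for each $\tau\in\mathcal T_t$, which is the same class for both starting points, and the infimum inherits the order.

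For (iii), I compare starting times $t<t'$. The problem started at $t'$ has horizon $T-t'$ and drift $a(\cdot,m^{[n-1]}(t'+\cdot))$. The problem started at $t$ has a longer horizon and drift $a(\cdot,m^{[n-1]}(t+\cdot))$, which by \ref{itm:A1}-(iii) and monotonicity of $m^{[n-1]}$ is pointwise smaller. Realise both on the same Brownian increments, with $\hat X_s$ the process started at time $t$ and $\tilde X_s$ the one started at $t'$, both from $x$. The drift comparison theorem then gives $\hat X_s\le\tilde X_s$. Any stopping time $\tau\le T-t'$ is admissible in both problems, so
\[
u_n(t,x,y)\le \E\Big[\int_0^\tau \e^{-rs}\partial_yf(\hat X_s,y)\ud s+c_0\e^{-r\tau}\Big]\le \E\Big[\int_0^\tau \e^{-rs}\partial_yf(\tilde X_s,y)\ud s+c_0\e^{-r\tau}\Big].
\]
Taking the infimum over such $\tau$ yields $u_n(t,x,y)\le u_n(t',x,y)$. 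The one subtlety is the identification of the filtrations, which is handled by the strong Markov/shift property of Brownian increments.

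For (iv), I would prove continuity as follows.
\begin{enumerate}
\item Bound $|U_n(t,x,y;\tau)-U_n(t',x',y';\tau)|$ uniformly in $\tau$ using $\E\int_0^{T}\e^{-rs}\big|\partial_yf(X^{t,x}_{t+s},y)-\partial_yf(X^{t',x'}_{t'+s},y')\big|\ud s$, plus a term coming from the horizon mismatch $|t-t'|$. The horizon mismatch is handled by stopping at $\tau\wedge(T-t')$ and bounding the excess integral by $\E\int_{T-t'}^{T-t}|\partial_yf|$.
\item Show the bound in step 1 tends to zero using joint continuity of the flow $(t,x,s)\mapsto X^{t,x}_{t+s}$, continuity of $\partial_yf$ on $\Sigma'$, and uniform integrability from \ref{itm:A3}-(i) with $p>1$ (Vitali).
\end{enumerate}
The main obstacle is the restriction $y>0$. $\partial_y f$ may blow up at $y=0$ (for example in the Cobb--Douglas case), so continuity in $y$ requires local uniform integrability near fixed $y>0$. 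I would obtain this by monotonicity of $\partial_yf$ in $y$: for $y\ge y_0>0$, $|\partial_yf(x,y)|\le|\partial_yf(x,y_0)|$, which is $L^p$-integrable by \ref{itm:A3}-(i), giving the required dominating family.
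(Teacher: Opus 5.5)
The paper gives no argument of its own for this result; it only refers to Campi et al.\ (Proposition 3.2). Your arguments for (i)--(iii) are the standard ones and are correct:
\begin{itemize}
\item for (i), $\partial_y f\ge 0$;
\item for (ii), concavity in $y$ and the comparison principle in $x$;
\item for (iii), the drift comparison $a(\cdot,m^{[n-1]}(t+\cdot))\le a(\cdot,m^{[n-1]}(t'+\cdot))$ on a common Brownian motion, together with the fact that stopping times with values in $[0,T-t']$ are admissible in both problems.
\end{itemize}

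The gap is in (iv). The sentence ``$|\partial_yf(x,y)|\le|\partial_yf(x,y_0)|$, which is $L^p$-integrable by (A3)-(i), giving the required dominating family'' does not supply what Vitali (or dominated convergence) needs. Monotonicity in $y$ only removes the dependence on $y'$. You are still left with the family $\partial_yf(X^{t',x'}_s,y_0)$, indexed by $(t',x')$ near $(t,x)$. Assumption (A3)-(i) is only a pointwise finiteness statement: it gives no bound on $\E\int_0^{T-t'}\e^{-rs}|\partial_yf(X^{t',x'}_s,y_0)|^p\,\ud s$ that is uniform over a neighbourhood of $(t,x)$.

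You can close this with the comparison you already use. Fix $x_1>x$ and $y_0\in(0,y)$, and let $\bar X^{x_1}$ solve the SDE from $x_1$ with drift $a(\cdot,1)$, driven by the same Brownian motion. Since $m^{[n-1]}\le 1$ and $a$ is non-decreasing in $m$, you get $X^{t',x'}_s\le \bar X^{x_1}_s$ for all $t'$, all $x'\le x_1$ and all $s\in[0,T-t']$. Hence
\[
0\le \partial_yf(X^{t',x'}_s,y')\le \partial_yf(X^{t',x'}_s,y_0)\le \partial_yf(\bar X^{x_1}_s,y_0),\qquad y'\ge y_0 .
\]
Applying (A3)-(i) with $m\equiv 1$ at $(0,x_1,y_0)$ makes the right-hand side a single integrable dominating function. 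The same bound also controls your horizon-mismatch term $\E\int_{T-t'}^{T-t}\e^{-rs}\partial_yf\,\ud s$.

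A second, related point: (iv) needs one consistent realisation. To use the same $\tau$ in $U_n(t,x,y;\tau)$ and $U_n(t',x',y';\tau)$, you need the common-Brownian-motion realisation from (iii). The flow $(t,x,s)\mapsto X^{t,x}_{t+s}$ whose continuity the paper quotes is driven by $W_{t+\cdot}-W_t$. Each $t$ therefore carries a different filtration, and a common $\tau$ is not admissible in both problems.

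In the common realisation, continuity in $t$ must be proved separately, and this is where the possible jumps of $m^{[n-1]}$ enter. By Burkholder--Davis--Gundy and Gronwall, with the supremum over the common horizon $[0,T-(t\vee t')]$,
\[
\E\sup_{s}|X^{t',x'}_s-X^{t,x}_s|^2\le C\Big(|x'-x|^2+\int_0^{T-(t\vee t')}|m^{[n-1]}(t'+u)-m^{[n-1]}(t+u)|\,\ud u\Big).
\]
The last integral is at most $|t'-t|$, because $m^{[n-1]}$ is monotone with values in $[0,1]$. Convergence in probability together with the domination above then gives $u_n\in C([0,T]\times\Sigma')$.
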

\begin{proof}
    See \cite{CDAGLAAP2021}, Proposition 3.2.
\end{proof}

Given a set $U$, we denote $W^{1,2;p}(U)$, $1\le p<\infty$, the class of functions that are in $L^p(U)$ along with their weak derivatives of first order in time and space and of second order in space; we denote $C^{0,1;\alpha}(U)$, $\alpha\in(0,1)$, the class of $\alpha$-H\"older continuous functions with $\alpha$-H\"older-continuous space derivative; finally, we use $W^{1,2;p}_{\ell oc}(U)$ and $C^{0,1;\alpha}_{\ell oc}(U)$ for functions belonging to $W^{1,2;p}(K)$ and $C^{0,1;\alpha}(K)$, respectively, for any compact set $K\subset U$.

Let $\mathcal{C}_{y}^{[n]}:=\{(t, x):(t, x, y) \in \mathcal{C}^{[n]}\}$. In the proof of \cite[Lemma A.1]{CDAGLAAP2021}, it is shown that $u_n(\,\cdot\,,y) \in W_{\ell oc}^{1, 2; p}(\mathcal{C}_{y}^{[n]})$, for any $p\in[1,\infty)$, and it solves 
\begin{equation}\label{eq:weak_PDE}
   \left(\partial_t u_n + \frac{\sigma^2(\,\cdot\,)}{2} \partial_{x x} u_n + a(\,\cdot\,,m^{[n-1]}(\,\cdot\,))\partial_{x} u_n - r u_n\right)(\,\cdot\,,y) = -\partial_{y} f(\,\cdot\,,y),\quad\text{a.e.\ in $\cC^{[n]}_y$}.
\end{equation}
By Sobolev embedding $W_{\ell oc}^{1, 2;p}(\mathcal{C}_y^{[n]}) \hookrightarrow \mathcal{C}_{\ell oc}^{0,1;\alpha}(\mathcal{C}_y^{[n]})$ for $\alpha = 1 - 3/p$ with $p > 3$. Then, we deduce  $u(\,\cdot\,,y) \in \mathcal{C}_{\ell oc}^{0,1;\alpha}(\mathcal{C}_y^{[n]})$. 

Finally, it is worth recalling the relationship between $\cC^{[n]}$ and the boundary $c_n$. As it transpires from the above discussion, we have
\[
\cC^{[n]}=\{(t,x,y): c_n(t,x)<y\}\quad\text{and}\quad\cS^{[n]}=\{(t,x,y): c_n(t,x)\ge y\}.
\]
Analogously, we have
\begin{equation*}
\begin{aligned}
\cC&\coloneqq\{(t,x,y): u(t,x,y)<c_0\}=\{(t,x):c(t,x)<y\},\\
\cS&\coloneqq\{(t,x,y): u(t,x,y)=c_0\}=\{(t,x):c(t,x)\ge y\}.
\end{aligned}
\end{equation*}
The function $u$ enjoys the same regularity as functions $u_n$ and it solves the same PDE as \eqref{eq:weak_PDE} but with $\cC_y$ in place of $\cC^{[n]}_y$ and $m^*$ in place of $m^{[n-1]}$ (cf.\ \cite[Lemma A.1]{CDAGLAAP2021}).

\section{Refined properties of value function and optimal boundaries in $\textbf{OS}_{t,x,y}^{[n]}$}\label{sec:properties_vf_and_boundaries}
In this section we prove new results concerning functions $u_n$ and $c_n$ which are required for our numerical scheme. 
We prove continuity of $\partial_x u_n$ on $[0,T] \times \Sigma^{\circ}$ (Proposition \ref{prop:continuityspatialderivative}) and continuity of the generalized left-continuous inverse of $c_n(t,\cdot)$ (Theorem \ref{prop:jointlycontinuityb}).

The proof of Proposition \ref{prop:continuityspatialderivative} below requires the following result\footnote{In writing the present paper, we realized that the statement of Lemma 4.4 in \cite{CDAGLAAP2021} is not accurate. In the present work, for the sake of completeness, we report the precise statement. In particular, the null set in \eqref{eq:convergencetau} depends on the sequence $(t_k, x_k, y_k)_{k \in \mathbb{N}}$.} from \cite[Lemma 4.4]{CDAGLAAP2021}.
\begin{lemma}\label{lemm:continuityoftau}
    Fix $(t, x, y) \in [0,T] \times \Sigma^{\circ}$ and let $(t_k, x_k, y_k) \rightarrow (t, x, y)$ as $k \rightarrow \infty$. Then, 
\begin{equation}\label{eq:convergencetau}
         \lim_{k \rightarrow \infty} \tau_{*}^{[n]}(t_k, x_k, y_k) = \tau_{*}^{[n]}(t, x, y),\quad \P-\emph{a.s.}
    \end{equation}
     with $\tau_{*}^{[n]}(t, x, y) = 0$, $\P$-\emph{a.s.} for $(t, x, y) \in \partial \mathcal{C}^{[n]}$.
\end{lemma}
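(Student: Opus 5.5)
We prove the convergence \eqref{eq:convergencetau} by a lower-semicontinuity argument and an upper-semicontinuity argument, both carried out pathwise on a single null set that depends on the chosen sequence. Throughout, $\tau_*^{[n]}(t,x,y)=\inf\{s\in[0,T-t]: c_n(t+s,X^{[n];t,x}_{t+s})\ge y\}$. We use the a.s.\ continuity of the flow $(t,x,s)\mapsto X^{[n];t,x}_{t+s}$ and the monotonicity and semicontinuity properties of $c_n$. Fix a full-measure set $\Omega_0$ on which the flow is jointly continuous; on $\Omega_0$ we also place the countably many events specified below. It is convenient to view the stopping times on the absolute time scale, $\sigma_k:=t_k+\tau_*^{[n]}(t_k,x_k,y_k)$ and $\sigma:=t+\tau_*^{[n]}(t,x,y)$, and to show $\sigma_k\to\sigma$.

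\emph{Lower semicontinuity}, i.e.\ $\liminf_k\sigma_k\ge\sigma$. Take a subsequence along which $\sigma_k\to\bar\sigma$. By right-continuity of paths and closedness of $\mathcal S^{[n]}$ (which holds because $u_n$ is continuous on $[0,T]\times\Sigma'$ by Proposition \ref{prop:properties_vf_1}-(iv)), we have $u_n(\sigma_k,X^{[n];t_k,x_k}_{\sigma_k},y_k)=c_0$. Passing to the limit with joint continuity of the flow and of $u_n$ gives $u_n(\bar\sigma,X^{[n];t,x}_{\bar\sigma},y)=c_0$, so $\bar\sigma\ge\sigma$. This step is routine; the only care needed is at $\bar\sigma=T$, where stopping is automatic and the inequality is trivial.

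\emph{Upper semicontinuity}, i.e.\ $\limsup_k\sigma_k\le\sigma$. This is the main obstacle. Entering $\mathcal S^{[n]}$ at time $\sigma$ does not by itself force nearby paths to enter, because the path could merely touch the boundary. The plan is to show that a.s.\ the path $s\mapsto(s,X^{[n];t,x}_s)$ enters the \emph{interior} $\{(s,x'): c_n(s,x')>y\}$ immediately after $\sigma$, i.e.\ at times arbitrarily close to $\sigma^+$. Monotonicity of $c_n$ in $x$ and $s$ reduces this to a regularity statement. At $\sigma$ we have $c_n(\sigma,X_\sigma)\ge y$. For any $\varepsilon>0$, on $[\sigma,\sigma+\varepsilon]$ the Brownian-driven diffusion (nondegenerate since $\sigma(\cdot)>0$) exceeds $X_\sigma+\delta$ for some $\delta>0$, by the law of the iterated logarithm applied via the strong Markov property at $\sigma$. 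At such a time $s'$ we get $c_n(s',X_{s'})\ge c_n(\sigma,X_\sigma+\delta)$.

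To turn this into a strict inequality we use strict monotonicity of $u_n$ in $y$. By \ref{itm:A2}-(ii), $\partial_y f$ is strictly decreasing, which should give $c_n(\sigma,X_\sigma+\delta)>y'$ for every $y'<y$ sufficiently close. Alternatively, one can replace $y$ by $y-\eta$, apply the argument, and then let $\eta\downarrow0$ using $y_k\to y$. Once the limit path is known to be strictly inside the stopping region at some time $s'\in(\sigma,\sigma+\varepsilon)$, joint continuity of the flow and lower semicontinuity of the set $\{c_n>y\}$ give $\sigma_k\le s'$ for large $k$. Since $\varepsilon$ is arbitrary, $\limsup_k\sigma_k\le\sigma$. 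We use countably many rational $\varepsilon,\eta$ so that the exceptional set remains a null set depending on the sequence, as the footnote indicates.

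Finally, if $(t,x,y)\in\partial\mathcal C^{[n]}$, the same argument with $\sigma=t$ shows that the path immediately enters the interior of $\mathcal S^{[n]}$, so $\tau_*^{[n]}(t,x,y)=0$ a.s.
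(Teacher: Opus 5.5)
Your lower-semicontinuity step is fine, and your strategy for the upper bound is the right one: immediate entry after the hitting time via the law of the iterated logarithm, combined with monotonicity of $c_n$ in $t$ and $x$. (The final claim is in fact immediate: $\mathcal C^{[n]}$ is open, so $\partial\mathcal C^{[n]}\subset\mathcal S^{[n]}$, and $\tau_*^{[n]}=0$ there by definition.)

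The gap is exactly at the point you flag as the main obstacle. To get $t_k+\tau_*^{[n]}(t_k,x_k,y_k)\le s'$ for large $k$ you need $c_n(s',X^{[n];t_k,x_k}_{s'})\ge y_k$. Since $y_k$ may approach $y$ from above, monotonicity only helps if $c_n(\sigma,X_\sigma+\delta)>y$ strictly, where $\sigma=t+\tau^{[n]}_*(t,x,y)$ is your absolute time. Your justification does not deliver this, for three reasons.
\begin{enumerate}
\item The inequality $c_n(\sigma,X_\sigma+\delta)>y'$ for $y'<y$ is trivial, since it already follows from $c_n(\sigma,X_\sigma)\ge y$. It is useless when $y_k>y$.
\item Strict monotonicity of $u_n$ in $y$ on $\mathcal C^{[n]}$ cannot exclude a flat stretch $c_n(\sigma,\cdot)\equiv y$ on $[X_\sigma,X_\sigma+\delta]$. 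Such a stretch means $u_n(\sigma,x',y)=c_0>u_n(\sigma,x',y'')$ for all $y''>y$, which is perfectly consistent with strict monotonicity.
\item Replacing $y$ by $y-\eta$ goes the wrong way. Because $\tau_*^{[n]}$ is non-decreasing in $y$, lowering $y$ gives no upper bound on $\tau_*^{[n]}(t_k,x_k,y_k)$ when $y_k>y$. You would need $y+\eta$ and then right-continuity of $\eta\mapsto\tau_*^{[n]}(t,x,y+\eta)$, which is the same missing fact.
\end{enumerate}

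The ingredient that closes the argument is that $x\mapsto c_n(t,x)$ has no flat pieces at levels in $(0,1)$. Equivalently, $y\mapsto b_n(t,y)$ is continuous on $(0,1]$, which the paper records in Section 3 (from Step 2 of Lemma A.1 in the cited work of Campi et al.). With it, the argument runs as follows.
\begin{enumerate}
\item Since $c_n(\sigma,X_\sigma)\ge y$, we have $b_n(\sigma,y)\le X_\sigma$.
\item By continuity in $y$ you can choose $y''\in(y,1]$ with $b_n(\sigma,y'')<b_n(\sigma,y)+\delta\le X_\sigma+\delta$.
\item Hence $c_n(\sigma,X_\sigma+\delta)\ge y''>y$.
\item By flow continuity and monotonicity, $c_n(s',X^{[n];t_k,x_k}_{s'})\ge y''>y_k$ for all large $k$, so the upper bound follows.
\end{enumerate}
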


\begin{proposition}\label{prop:continuityspatialderivative}
    Let Assumptions \ref{itm:A1}, 
        \ref{itm:A2}, and \ref{itm:A3} hold. Then, $\partial_{x} u_n\in C([0, T] \times \Sigma^{\circ})$. 
\end{proposition}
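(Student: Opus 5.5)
We identify $\partial_x u_n$ probabilistically, via the optimal stopping time $\tau^{[n]}_*$ and the derivative flow $Z$, and then get continuity from Lemma \ref{lemm:continuityoftau} and dominated convergence. Fix $(t,y)$ and write $\tau=\tau_*^{[n]}(t,x,y)$. The plan is to show that $\partial_x u_n(t,x,y)$ exists everywhere in $[0,T]\times\Sigma^\circ$ and equals
\begin{equation}
\Phi(t,x,y):=\E\left[\int_0^{\tau}\e^{-rs}\partial_{xy}f(X^{[n];t,x}_{t+s},y)Z^{t,x}_{t+s}\,\ud s\right].
\end{equation}

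\textbf{Two-sided bounds.} For $\eps>0$, use the stopping time $\tau=\tau_*^{[n]}(t,x,y)$ in $u_n(t,x\pm\eps,y)$. This works because the stopping times in $\mathcal T_t$ are adapted to the Brownian filtration and do not depend on $x$. Suboptimality of $\tau$ then gives
\begin{equation}
u_n(t,x+\eps,y)-u_n(t,x,y)\le \E\left[\int_0^\tau \e^{-rs}\big(\partial_yf(X^{t,x+\eps}_{t+s},y)-\partial_yf(X^{t,x}_{t+s},y)\big)\ud s\right],
\end{equation}
and the analogous inequality for $x-\eps$. Dividing by $\eps$, the mean value theorem produces $\partial_{xy}f$ evaluated at an intermediate point, multiplied by $Z$ evaluated at an intermediate initial point. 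The flow is $C^1$ in $x$, and \ref{itm:A3}-(ii) gives uniform integrability on compacts: it controls $\E\int|\partial_{xy}f|(1+Z)$ uniformly over compacts, and a $p>1$ moment or a Vitali-type argument would upgrade this if needed. Passing $\eps\downarrow0$ therefore shows $\limsup$ of the right difference quotient $\le\Phi$ and $\liminf$ of the left difference quotient $\ge\Phi$.

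\textbf{Reverse bounds.} For the opposite inequalities, use instead $\tau_\eps:=\tau_*^{[n]}(t,x+\eps,y)$, which is optimal for $u_n(t,x+\eps,y)$, inside $u_n(t,x,y)$. This gives
\begin{equation}
u_n(t,x+\eps,y)-u_n(t,x,y)\ge \E\left[\int_0^{\tau_\eps} \e^{-rs}\big(\partial_yf(X^{t,x+\eps}_{t+s},y)-\partial_yf(X^{t,x}_{t+s},y)\big)\ud s\right].
\end{equation}
By Lemma \ref{lemm:continuityoftau}, $\tau_\eps\to\tau$ a.s., so dominated convergence yields a $\liminf\ge\Phi$. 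Hence $x\mapsto u_n(t,x,y)$ is differentiable with $\partial_xu_n=\Phi$. This also covers the boundary case: there $\tau=0$ and both one-sided derivatives vanish, which is consistent with $u_n=c_0$ in $\mathcal S^{[n]}$ and with smooth fit.

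\textbf{Continuity of $\Phi$.} Take $(t_k,x_k,y_k)\to(t,x,y)$ in $[0,T]\times\Sigma^\circ$. Lemma \ref{lemm:continuityoftau} gives $\tau_*^{[n]}(t_k,x_k,y_k)\to\tau_*^{[n]}(t,x,y)$ a.s. The flows $X^{t_k,x_k}$ and $Z^{t_k,x_k}$ converge uniformly on compacts a.s. by joint continuity of the flows, and $f\in C^2$ gives pointwise convergence of the integrands $\mathbbm{1}_{\{s<\tau_k\}}\e^{-rs}\partial_{xy}f(X^{t_k,x_k}_{t_k+s},y_k)Z^{t_k,x_k}_{t_k+s}$, away from the null set where $s=\tau$. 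Dominated convergence, justified by \ref{itm:A3}-(ii) on a compact neighbourhood of $(t,x,y)$, then gives $\Phi(t_k,x_k,y_k)\to\Phi(t,x,y)$.

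\textbf{Main obstacle.} The hard part is the domination and uniform integrability needed to exchange limits. \ref{itm:A3}-(ii) only provides $\sup_K\E\int|\partial_{xy}f(X,y)|(1+Z)$, a first-moment bound, not an explicit dominating random variable. I would handle this by splitting the integrand with a truncation $\{|\partial_{xy}f|Z\le M\}$, using the continuity of the flow on the truncated part. On the remainder, I would use $\partial_{xy}f>0$ together with Fatou's lemma applied to both $\Phi$ and its complement $\E\int_0^{T-t}(\cdot)-\Phi$: the full-horizon integral is continuous, and lower semicontinuity of both pieces forces continuity of $\Phi$. This positivity trick avoids needing a dominating function at all, so I would try it first.
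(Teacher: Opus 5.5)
Your proposal follows the paper's proof essentially step for step. The upper bound comes from plugging $\tau_*^{[n]}(t,x,y)$ into $u_n(t,x+\eps,y)$, and the lower bound from plugging $\tau_*^{[n]}(t,x+\eps,y)$ into $u_n(t,x,y)$ together with Lemma~\ref{lemm:continuityoftau}. This gives the representation $\partial_x u_n=\E\bigl[\int_0^{\tau_*^{[n]}}\e^{-rs}\partial_{xy}f(X^{[n];t,x}_{t+s},y)Z^{[n];t,x}_{t+s}\,\ud s\bigr]$, and continuity then follows from Lemma~\ref{lemm:continuityoftau}, continuity of the flows and \ref{itm:A3}. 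Using the mean value theorem instead of the paper's fundamental theorem of calculus plus Fubini makes no difference.

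One caveat concerns your closing paragraph. A uniform first-moment bound as in \ref{itm:A3}-(ii) is not uniform integrability. Your positivity/Fatou device does give the $\liminf$ directions for free. For the $\limsup$ direction, however, it only shifts the problem to continuity of the unstopped integral $\E\bigl[\int_0^{T-t}\e^{-rs}\partial_{xy}f(X^{[n];t,x}_{t+s},y)Z^{[n];t,x}_{t+s}\,\ud s\bigr]$. That continuity needs the same uniform integrability that the paper takes from \ref{itm:A3} when it invokes dominated convergence, so the device is a reduction rather than a way around that step.
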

\begin{proof}
Fix an arbitrary $(t, x, y) \in [0,T] \times \Sigma^{\circ}$ and let $\tau_{*}^{[n]} = \tau_{*}^{[n]}(t, x, y)$. For any small $\varepsilon>0$ we have:
\begin{equation*}
    \begin{split}
        &u_n(t, x+\varepsilon, y)-u_n(t, x, y)\\
        &\leq \E\left[\int_{0}^{\tau_{*}^{[n]}}\e^{-r s}\left(\partial_y f(X_{t+s}^{[n]; t, x+\varepsilon},y)-\partial_y f(X_{t+s}^{[n]; t, x}, y)\right)\,\ud s\right]\\
        &=\int_{x}^{x+\varepsilon}\E\left[\int_{0}^{\tau_{*}^{[n]}}\e^{-rs}\partial_{x y} f(X_{t+s}^{[n];t,\eta}, y)Z_{t+s}^{[n];t,\eta}\,\ud s\right]\,\ud \eta,
    \end{split}
\end{equation*}
where the equality holds by the fundamental theorem of calculus and Fubini's theorem.
Dividing by $\varepsilon$ and letting $\varepsilon \rightarrow 0$, we use dominated convergence (cf.\ Assumptions \ref{itm:A3}-(i),(ii)) and continuity of the flows $x \mapsto (X^{[n]; t, x}, Z^{[n];t,x})$ to conclude
\begin{equation*}
    \limsup_{\varepsilon \rightarrow 0} \frac{u_n(t, x+\varepsilon, y)-u_n(t, x, y)}{\varepsilon} \leq \E\left[\int_{0}^{\tau_{*}^{[n]}}\e^{-r s}\partial_{x y}f(X_{t+s}^{[n];t, x}, y)Z_{t+s}^{[n];t,x}\,\ud s\right].
\end{equation*}

Letting $\tau_{*}^{[n] \varepsilon}:=\tau_{*}^{[n]}(t, x+\varepsilon, y)$ and arguing similarly to the above paragraph we have
\begin{equation*}
    \begin{split}
        &u_n(t, x+\varepsilon, y)-u_n(t, x, y)\\
        &\geq \E\left[\int_{0}^{\tau_{*}^{[n]\varepsilon}}\e^{-r s}\left(\partial_y f(X_{t+s}^{[n]; t, x+\varepsilon},y)-\partial_y f(X_{t+s}^{[n]; t, x},y)\right)\,\ud s\right]\\
&=\int_{x}^{x+\varepsilon}\E\left[\int_{0}^{\tau_{*}^{[n]\varepsilon}}\e^{-r s}\partial_{x y}f(X_{t+s}^{[n];t, \eta},y)Z_{t+s}^{[n];t,\eta}\,\ud s\right]\,\ud \eta.
    \end{split}
\end{equation*}
Dividing again by $\varepsilon>0$ and letting $\varepsilon \rightarrow 0$, we can now invoke Lemma \ref{lemm:continuityoftau} to justify that $\tau_{*}^{[n];\varepsilon}\rightarrow\tau_{*}^{[n]}$ and obtain 
\begin{equation*}
    \liminf_{\varepsilon \rightarrow 0} \frac{u_n(t, x+\varepsilon, y)-u_n(t, x, y)}{\varepsilon} \geq \E\left[\int_{0}^{\tau_{*}^{[n]}}\e^{-r s}\partial_{x y}f(X_{t+s}^{[n];t, x},y)Z_{t+s}^{[n];t,x}\,\ud s\right].
\end{equation*}

So, in conclusion we have shown that $\partial_{x} u_n$ exists in $[0,T] \times \Sigma^{\circ}$ and it reads
\begin{equation*}
    \partial_x u_n(t, x, y) = \E\left[\int_{0}^{\tau_{*}^{[n]}}\e^{-r s}\partial_{x y}f(X_{t+s}^{[n]; t, x}, y)Z_{t+s}^{[n];t,x}\,\ud s\right].
\end{equation*}
Continuity of $(t, x, y) \mapsto \tau_{*}^{[n]}(t, x, y)$ (in the sense of Lemma \ref{lemm:continuityoftau}) and of the stochastic flow $(t,s,x) \mapsto (X^{[n];t, x}_s, Z^{[n];t,x}_s)$, combined with dominated convergence implies that $\partial_x u_n$ is continuous on $[0,T] \times \Sigma^{\circ}$.
\end{proof}

It may be tempting to prove regularity of $\partial_{t} u_n$ in a similar fashion as we did for $\partial_x u_n$. 
This would imply taking limits of $(X_{t+s}^{[n]; t, x}-X_{t+s-\varepsilon}^{[n]; t-\varepsilon, x})/\eps$ as $\eps\to 0$, which would require additional regularity conditions on $m^{[n-1]}(t)$. Since the latter depends on the optimal boundary $(t, y)\mapsto c_n(t, y)$, it is unclear that such higher regularity can actually be proven. We will come back to the implications of this issue in the next section.

Now we focus on the study of regularity of the optimal stopping boundaries.
Let 
\begin{equation}\label{eq:setH}
    \mathcal{H}:=\{(x, y)\in \mathbb{R} \times [0,1]\,:\,\partial_{y}f(x,y)-r c_0 < 0\}.
\end{equation}
It is well known in optimal stopping theory that $[0,T)\times\cH\subset\cC^{[n]}$ because for any $(x,y)\in\cH$ and any stopping time $0<\tau\le \inf\{s\ge 0:(X^{[n]}_{t+s},y)\notin\cH\}\wedge(T-t)$,
\[
u_{n}(t,x,y)\le c_0+\E\Big[\int_0^\tau\e^{-rs}\Big(\partial_y f(X^{[n]}_{t+s},y)-rc_0\Big)\ud s\Big]<c_0.
\]

Recall that $\partial_{xy}f>0$. For fixed $y \in [0, 1)$ we define 
\begin{equation}
\label{eq::xbarquantity}
\bar{x}(y):=\inf\{x \in \mathbb{R}\,:\,\partial_{y} f(x, y) - r c_0 > 0\}=\sup\{x\in\mathbb R:(x,y)\in\mathcal H\}.
\end{equation}
We prefer to work with generalized inverses of the optimal boundaries, because for those functions we can establish continuity. 
We define:
\begin{equation}\label{eq:generalized}
    b(t,y):=\sup\{x\in\mathbb{R}\,:\,c(t,x)<y\}\quad\text{and}\quad b_n(t,y):=\sup\{x\in\mathbb{R}\,:\,c_n(t,x)<y\},
\end{equation}
with $\sup\varnothing=-\infty$.
Since $x\mapsto c_n(t,x)$, $x\mapsto c(t,x)$, $t\mapsto c_n(t,x)$ and $t\mapsto c(t,x)$ are non-decreasing and right-continuous, then $t\mapsto b_n(t,y)$ and $t\mapsto b(t,y)$ are non-increasing for $y\in[0,1]$ while $y\mapsto b_n(t,y)$ and $y\mapsto b(t,y)$ are non-decreasing for $t\in[0,T]$. It is proven in Step 2 of the proof of Lemma A.1 in \cite{CDAGLAAP2021} that $y\mapsto b(t,y)$ is also continuous\footnote{Regrettably, the proof in \cite{CDAGLAAP2021} contains some typos which have been fixed in \cite{campi2020mean} (see Step 2 in the proof of Lemma 5.1)} on $(0,1]$. Analogous arguments allow us to conclude that $y\mapsto b_n(t,y)$ is continuous on $(0,1]$. Finally, we can express the sets $\cS$ and $\cS^{[n]}$ in terms of $b$ and $b_n$ as follows:
\begin{equation}\label{eq:SSn}
\cS=\{(t,x,y)\in[0,T]\times \Sigma: x \ge b(t,y) \}\quad \text{and}\quad \cS^{[n]}=\{(t,x,y)\in[0,T]\times \Sigma: x \ge b_n(t,y) \}.
\end{equation}

Our first result concerns continuity of the boundary $b$.

\begin{theorem}\label{prop:jointlycontinuityb}
Let Assumptions \ref{itm:A1}, 
\ref{itm:A2}, and \ref{itm:A3}-(i) hold. Then, for all $n\in\mathbb N$ the mappings $(t, y) \mapsto b_n(t,y)$ and $(t, y) \mapsto b(t,y)$ are jointly continuous on $[0,T]\times(0,1]$ with $b_n(T,y)=b(T,y)=\bar x(y)$ for all $y\in(0,1]$.
\end{theorem}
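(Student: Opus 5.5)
Since $t\mapsto b(t,y)$ is non-increasing and $y\mapsto b(t,y)$ is non-decreasing and continuous on $(0,1]$, joint continuity will follow from continuity in $t$ for each fixed $y$. The argument is the same for $b_n$ (with $u_n$, $m^{[n-1]}$) and for $b$ (with $u$, $m^*$), so I write it for $b_n$. The reduction is the usual one for functions that are monotone in each variable. Take $(t_k,y_k)\to(t,y)$ and $\eps>0$. By continuity in $y$, choose $y'<y<y''$ in $(0,1]$ with $b_n(t,y'')-b_n(t,y')<\eps$; if $y=1$, take $y''=1$ and use $y_k\le 1$. For $k$ large, $y'\le y_k\le y''$, so monotonicity gives
\[
b_n(t_k,y')\le b_n(t_k,y_k)\le b_n(t_k,y''),
\]
and continuity in $t$ at the fixed levels $y',y''$ yields $\limsup_k|b_n(t_k,y_k)-b_n(t,y)|\le\eps$. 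At $t=T$ the same reduction needs continuity of $y\mapsto b_n(T,y)=\bar x(y)$. This follows from the implicit function theorem: $\partial_{xy}f>0$ and $\partial_y f(x,y)$ is continuous, strictly decreasing in $y$ and strictly increasing in $x$.

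Before the time argument I record finiteness and a lower bound. Since $[0,T)\times\cH\subset\cC^{[n]}$, we have $b_n(t,y)\ge\bar x(y)$ for $t<T$. For finiteness from above, I would use $\lim_{x\to\infty}\partial_yf(x,y)>rc_0$: for $x$ very large the drift of $u_n$ makes continuation unprofitable, or, more simply, I would appeal to the finiteness of $c_n$ from \cite{CDAGLAAP2021}. Right-continuity in $t$ then uses closedness of $\cS^{[n]}$, which holds because $u_n$ is continuous on $[0,T]\times\Sigma'$ by Proposition \ref{prop:properties_vf_1}(iv). For $s\downarrow t$ we have $b_n(s,y)\le b_n(t,y)$ and $(s,b_n(s,y),y)\in\cS^{[n]}$. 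Passing to the limit gives $(t,b_n(t+,y),y)\in\cS^{[n]}$, hence $b_n(t,y)\le b_n(t+,y)\le b_n(t,y)$.

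The main obstacle is left-continuity in $t$, including the identification of the limit at $T$. For this I would use the standard contradiction argument adapted to the weak PDE \eqref{eq:weak_PDE}. Suppose $x_1:=b_n(t,y)<x_2:=b_n(t-,y)$ for some $t\in(0,T]$. Here, for $t=T$, $x_1$ is read as $\bar x(y)$, and the claim to refute is $\lim_{s\uparrow T}b_n(s,y)>\bar x(y)$. Then $(t-\delta,t)\times(x_1,x_2)\subset\cC^{[n]}_y$, while $u_n(t,x,y)=c_0$ for $x\in[x_1,x_2]$; for $t=T$ this holds because the terminal value is $c_0$. Proposition \ref{prop:properties_vf_1}(iii) gives $\partial_tu_n\ge0$ a.e.\ in the rectangle. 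Fix $\varphi\in C^\infty_c(x_1,x_2)$ with $\varphi\ge0$, multiply \eqref{eq:weak_PDE} by $\varphi$ and integrate in $x$. This yields, for a.e.\ $s\in(t-\delta,t)$,
\[
\int\Big(\tfrac{\sigma^2}{2}\partial_{xx}u_n+a(\cdot,m^{[n-1]}(s))\partial_xu_n-ru_n+\partial_yf\Big)(s,x,y)\varphi(x)\,\ud x\le0.
\]
I would integrate the second-order term by parts once, moving one derivative onto $\sigma^2\varphi/2$. Every term then involves only $u_n$ and $\partial_xu_n$, which are continuous up to time $t$ by Proposition \ref{prop:continuityspatialderivative}. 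The coefficient $m^{[n-1]}(s)$ is monotone and bounded, so it has a left limit, and $a$ is Lipschitz in $m$. Letting $s\uparrow t$ and using $u_n(t,\cdot,y)\equiv c_0$ on $\mathrm{supp}\,\varphi$, so that $\partial_xu_n(t,\cdot,y)=0$ there, gives
\[
\int_{x_1}^{x_2}\big(\partial_yf(x,y)-rc_0\big)\varphi(x)\,\ud x\le0.
\]
But $x_1\ge\bar x(y)$ and $\partial_{xy}f>0$, so $\partial_yf(x,y)-rc_0>0$ on $(x_1,x_2)$, which is a contradiction. The same argument with $t=T$ shows $b_n(T-,y)=\bar x(y)$, which is the convention for $b_n(T,y)$ in the statement. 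Together with the lower bound $b_n\ge\bar x(y)$ this proves the terminal identity and completes continuity on $[0,T]\times(0,1]$.
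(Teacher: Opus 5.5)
Your proposal is correct and follows essentially the same route as the paper. Both arguments reduce to continuity in $t$ via monotonicity in each variable: the paper cites \cite{KDAMM1969}, while you write out the $\varepsilon$-argument. Both get right-continuity from closedness of $\cS^{[n]}$, and left-continuity from the contradiction argument that tests the weak PDE \eqref{eq:weak_PDE} against $\varphi\ge 0$, drops $\partial_t u_n\ge 0$, and lets time increase to $t$ using continuity of $u_n$ and $\partial_x u_n$ to reach $\int(\partial_y f-rc_0)\varphi\,\ud x\le 0$. Your deviations are cosmetic: you work on a.e.\ time slices instead of averaging over $[t,t+h]$, integrate by parts once instead of twice, invoke $b_n\ge\bar x(y)$ directly, and fold the identification $b_n(T-,y)=\bar x(y)$ into the same contradiction.
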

\begin{proof}
    It suffices to prove the claim for $b$ as the proof is analogous for $b_n$. Since $y\mapsto b(t,y)$ is continuous, if we prove that also $t\mapsto b(t,y)$ is continuous on $[0,T]$ we then obtain joint continuity of $(t,y)\mapsto b(t,y)$ on $[0,T]\times(0,1]$ thanks to monotonicity of the maps $t\mapsto b(t,y)$ and $y\mapsto b(t,y)$ (cf.\ \cite[Prop.\ 1]{KDAMM1969}).

First, we prove that $(t, y) \mapsto b(t, y)$ is lower semi-continuous. It will then follow that $t \mapsto b(t, y)$ is right-continuous for every $y \in [0,1]$, because it is non-increasing. Fix $(t, y)\in[0,T]\times[0,1]$ and take a sequence $(t_k, y_k)_{k \in \mathbb{N}}$ that converges to $(t, y)$ as $k \rightarrow \infty$. We denote $x_k:=b(t_k, y_k)$. Then $(t_k, x_k, y_k) \in \mathcal{S}$ for every $k \in \mathbb{N}$. Since $\mathcal S$ is closed, in the limit we get $\liminf_{k \rightarrow \infty} (t_k, x_k, y_k) =  (t, \liminf_{k \rightarrow \infty} b(t_k, y_k), y) \in \mathcal{S}$. Therefore, $\liminf_{k \rightarrow \infty} b(t_k, y_k) \geq b(t, y)$ by \eqref{eq:SSn}. 

Next, we prove that $t \mapsto b(t, y)$ is left-continuous. We proceed by contradiction as in \cite{DASIAM2015}. Assume that there exists $(t_0, y_0) \in (0, T] \times (0,1)$ such that $b(t_0, y_0) < b(t_{0}-, y_0)$. Let $x_{1}^{0}$ and $x_{2}^{0}$ be two points such that $b(t_0, y_0) < x_{1}^{0} < x_{2}^{0} < b(t_{0}-, y_0)$ and let $R:= [0, t_0) \times (x_{1}^{0}, x_{2}^{0})$. Then, $R \times \{y_0\} \subset \mathcal{C}$ because $t\mapsto b(t,y_0)$ is non-increasing. In particular, \eqref{eq:weak_PDE} holds a.e.\ in $R$. Let $\varphi \in \mathcal{C}_{c}^{\infty}((x_1^{0}, x_2^{0}))$, $\varphi \geq 0$ with $\int\varphi(x)dx=1$ and multiply \eqref{eq:weak_PDE} by $\varphi$. Since $\partial_t u(\,\cdot\,, y) \geq 0$ a.e.\ on $R$ for every $y \in [0,1]$ (Proposition \ref{prop:properties_vf_1}-(iii)) we have a.e.\ on $R$: 
\begin{equation*}
\varphi(\cdot)\left[\frac{\sigma^2(\cdot)}{2}\partial_{xx}u(\cdot,y_0)  + a(\cdot, m^*(\cdot))\partial_{x}u(\cdot, y_0)-r u(\cdot, y_0)\right]\leq -\varphi(\cdot)\partial_{y} f(\cdot, y_0).
\end{equation*}

Then, for arbitrary $0\le t< t_0$ and $0< h<t_0-t$, using integration by parts we obtain:
\begin{equation}
\begin{aligned}
&\frac{1}{h}\int_{t}^{t+h}\int_{x^0_1}^{x^0_2} \left( \frac{\sigma^2(x)}{2}\varphi (x)\partial_{xx}  + a(x, m^*(s))\varphi(x)\partial_x-r\varphi(x) \right) u(s,x,y_0) \ud x\, \ud s\\
&=\frac{1}{h}\int_{t}^{t+h}\int_{x^0_1}^{x^0_2} \left( \tfrac{1}{2}\partial_{xx}\big[\sigma^2(x)\varphi (x)\big]  + a(x, m^*(s))\varphi(x)\partial_x-r\varphi(x) \right)u(s, x, y_0) \ud x\, \ud s\\
& \leq - \frac{1}{h}\int_{t}^{t+h}\int_{x^0_1}^{x^0_2} \partial_y f(x, y_0)\varphi(x) \ud x\, \ud s. 
\end{aligned}
\end{equation}
Thanks to continuity of $u(\cdot,y_0)$, $\partial_x u(\cdot,y_0)$ and $a(\cdot)$, and right-continuity of $m^*(\cdot)$, letting $h\rightarrow 0$ we obtain
\begin{equation*}
\begin{aligned}
&\int_{x^0_1}^{x^0_2} \left( \tfrac{1}{2}\partial_{xx}\big[\sigma^2(x)\varphi (x)\big]  + a(x, m^*(t))\varphi(x)\partial_x-r\varphi(x) \right)u(t, x, y_0) \ud x \\
& \leq - \int_{x^0_1}^{x^0_2} \partial_y f(x, y_0)\varphi(x)\ud x 
\end{aligned}
\end{equation*}
Letting $t\uparrow t_0$ and using that $u$ and $\partial_x u$ are continuous (cf.\ Proposition \ref{prop:continuityspatialderivative})
with $u(t_0,x,y_0)=c_0$ and $\partial_x u(t_0,x,y_0)=0$, and that the limit $m^*(t_0-)=\lim_{t\uparrow t_0}m^*(t)$ is well-defined, we obtain
\begin{equation*}
\int_{x^0_1}^{x^0_2} \left( \tfrac{1}{2}\partial_{xx}\big[\sigma^2(x)\varphi (x)\big]-r\varphi(x) \right)c_0 \ud x  \leq - \int_{x^0_1}^{x^0_2} \partial_y f(x, y_0)\varphi(x)\ud x. 
\end{equation*}
Integrating by parts yields
\begin{equation}\label{final}
\int_{x^0_1}^{x^0_2}(\partial_yf(x,y_0)-rc_0)\varphi(x) \ud x\leq 0.
\end{equation}
Hence $\partial_yf(x,y_0)-rc_0\leq 0$ for all $x\in(x^0_1,x^0_2)$ by arbitrariness of $\varphi\geq 0$ and continuity of $x\mapsto\partial_yf(x,y_0)-rc_0$. However, $\mathcal{S}\subseteq \{(x,y): \partial_yf(x,y)-rc_0\ge 0\}$ (cf.\ the paragraph after \eqref{eq:setH}). Then it must be $\partial_yf(x,y_0)-rc_0= 0$ for all $x\in(x^0_1,x^0_2)$, which contradicts $\partial_{xy}f>0$. This proves that $t\mapsto b(t,y_0)$ is continuous on $[0,T]$. 

In particular, the above result yields continuity of $b(\cdot,y_0)$ at $t_0=T$. Since $b(t,y_0)\ge \bar x(y_0)$ for all $t\in[0,T)$ due to $\mathcal H\subseteq\mathcal C$, then $b(T,y_0)\ge \bar x(y_0)$. If, however $b(T,y_0)>\bar x(y_0)$, then it must be $\partial_yf(x,y_0)-rc_0>0$ for $x\in(\bar x(y_0),b(T,y_0))$ which contradicts \eqref{final}. Thus we also conclude that $b(T,y_0)=\bar x(y_0)$.
\end{proof}

We recall from \cite{CDAGLAAP2021} that the sequence $(c_n)_{n\in\mathbb N}$ is non-increasing and it converges pointwise to $c$.
As a consequence, for all $(t,y)$ the sets $\{x\in\R\,:\,c_n(t,x)<y\}$ are increasing in $n \in \mathbb{N}$ and the corresponding generalised inverses $b_n(t,y)=\sup\{x\in\R\,:\,c_n(t,x)<y\}$ form a non-decreasing sequence. The latter converges (pointwise) to a limit $b_\infty\le b$, where the inequality holds because $c_n \geq c$ implies $b_n\leq b$. Arguing by contradiction let us assume that for some $(t,y)$ we have $b_\infty(t,y)<b(t,y)$ and let us take $b_\infty(t,y)<x<b(t,y)$. Then, it should be $c(t,x)<y<c_n(t,x)$ for all $n\in\N$ but that is impossible because $c_n\downarrow c$. Then, $b_n\uparrow b$ pointwise as $n\to\infty$. 
Since $b_n$ and $b$ are continuous and the convergence is monotone,
by Dini's theorem we have the next simple corollary.
\begin{corollary}\label{cor:unifconv}
The sequence $(b_n)_{n\in\mathbb N}$ is non-decreasing and it converges uniformly to $b$ on any compact subset of $[0,T]\times(0,1]$.
\end{corollary}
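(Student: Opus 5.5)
The corollary has two parts: monotonicity in $n$ and local uniform convergence. Monotonicity and pointwise convergence $b_n\uparrow b$ are already settled in the paragraph preceding the statement. Indeed, $c_n\downarrow c$ implies that the sets $\{x: c_n(t,x)<y\}$ increase in $n$, so $b_n\le b_{n+1}\le b$. If the pointwise limit $b_\infty(t,y)$ were strictly below $b(t,y)$, any $x$ in between would satisfy $c(t,x)<y\le c_n(t,x)$ for every $n$, contradicting $c_n\downarrow c$. What remains is to upgrade pointwise to uniform convergence on compacts, and the plan is a Dini argument on a fixed compact $K\subset[0,T]\times(0,1]$.

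First we need the relevant functions to be real-valued and continuous on $K$. Theorem \ref{prop:jointlycontinuityb} gives joint continuity of $b_n$ and $b$ on $[0,T]\times(0,1]$. We must also check they are finite there. Since $\mathcal H\subset\cC^{[n]}$ on $[0,T)$, we have $b_n(t,y)\ge\bar x(y)>-\infty$ on $[0,T)\times(0,1]$, and at $t=T$ we have $b_n(T,y)=\bar x(y)$. The quantity $\bar x(y)$ is finite by the limits in \ref{itm:A2}-(i). For an upper bound, $b_n\le b\le b(0,y)$ because $t\mapsto b(t,y)$ is non-increasing; on $K$ this is bounded by continuity of $b$. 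Hence $g_n:=b-b_n$ is a sequence of continuous, real-valued, nonnegative functions on $K$, non-increasing in $n$ and converging pointwise to $0$.

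Then I would run Dini's theorem directly. Fix $\eps>0$ and set $A_n:=\{(t,y)\in K: g_n(t,y)\ge\eps\}$. Each $A_n$ is closed in the compact set $K$, hence compact. The sets are nested, $A_{n+1}\subset A_n$, by monotonicity of $g_n$, and $\bigcap_n A_n=\varnothing$ by pointwise convergence. The finite intersection property then gives some $N$ with $A_N=\varnothing$, and so $\sup_K|b-b_n|<\eps$ for all $n\ge N$.

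The only delicate point is ensuring finiteness and continuity up to $t=T$ and for $y$ near $0$. This is exactly why the compact sets are taken inside $[0,T]\times(0,1]$, where Theorem \ref{prop:jointlycontinuityb} applies and the bounds above are available; the $y=0$ edge is excluded. No other obstacle is expected.
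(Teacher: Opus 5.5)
Your proposal is correct and follows the paper's own route: monotonicity and pointwise convergence $b_n\uparrow b$ are derived from $c_n\downarrow c$, and the convergence is then upgraded to uniform convergence on compacts by Dini's theorem, using the continuity of $b_n$ and $b$ from Theorem \ref{prop:jointlycontinuityb}. The only difference is that you write out the finite-intersection proof of Dini's theorem and the finiteness checks, which the paper leaves implicit.
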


\section{The integral equation}\label{sec::TheIntegralEquation}

In this section we derive integral equations for the optimal stopping boundary $b_n$ of problem $\textbf{OS}_{t,x,y}^{[n]}$ (Theorem \ref{prop:Integralequation}) and subsequently for the optimal boundary $b$ arising from the solution to the MFG (Corollary \ref{cor:inteq}). 
Then, Theorem \ref{th:UniquenessTheoremIntegralEquation} proves uniqueness of the solution to such integral equations in suitable classes of functions.

\begin{theorem}[The Integral Equation]\label{prop:Integralequation}
Suppose Assumptions \ref{itm:A1}--\ref{itm:A3} hold. Then, the value function $u_n(t, x, y)$ of the optimal stopping problem $\mathbf{OS}_{t,x,y}^{[n]}$ in \eqref{eq:un} has the following representation:
    \begin{equation}\label{eq:integral_rep}
    \begin{aligned}
    u_n(t,x,y)&=\e^{-r(T-t)}c_0 + \mathsf{E}_{t,x}\Big[\int_{0}^{T-t}\e^{-r s}\partial_{y}f(X_{t+s}^{[n]},y)\,\ud s \Big]\\
            &\quad+ \E_{t,x}\Big[\int_{0}^{T-t}\e^{-rs}\left(r c_0 - \partial_y f(X_{t+s}^{[n]},y)\right)\mathsf{1}_{\{X_{t+s}^{[n]} \geq b_n(t+s,y)\}}\,\ud s\Big],
\end{aligned}
\end{equation}
for all $(t, x, y) \in [0,T]\times\Sigma$. 

The optimal stopping boundary $b_n$ solves the following integral equation:
\begin{equation}\label{eq:integral_eq0}
    \begin{aligned}
    c_0\big(1-\e^{-r(T-t)}\big)&= \mathsf{E}_{t,b_n(t,y)}\Big[\int_{0}^{T-t}\e^{-r s}\partial_{y}f(X_{t+s}^{[n]},y)\,\ud s \Big]\\
            &\quad+ \mathsf{E}_{t,b_n(t,y)}\Big[\int_{0}^{T-t}\e^{-rs}\left(r c_0 - \partial_y f(X_{t+s}^{[n]},y)\right)\mathsf{1}_{\{X_{t+s}^{[n]} \geq b_n(t+s,y)\}}\,\ud s\Big],
\end{aligned}
\end{equation}
for all $(t,y)\in [0,T)\times[0,1]$, with $b_n(T-,y) = \bar{x}(y)$ (cf.\ \eqref{eq::xbarquantity}).
\end{theorem}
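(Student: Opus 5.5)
The plan is to apply a change-of-variable formula to the process $s\mapsto \e^{-rs}u_n(t+s,X^{[n]}_{t+s},y)$ on $[0,T-t]$, with $y$ frozen as a parameter, and then take expectations. The expected outcome is
\[
\e^{-r(T-t)}u_n(T,X^{[n]}_T,y)=u_n(t,x,y)+\int_0^{T-t}\e^{-rs}\big(\partial_t u_n+\mathcal L^{[n]} u_n-ru_n\big)(t+s,X^{[n]}_{t+s},y)\,\ud s+\text{martingale},
\]
where $\mathcal L^{[n]}$ is the generator of $X^{[n]}$. No local-time term should appear, because $\partial_x u_n$ is continuous across $b_n$ by Proposition \ref{prop:continuityspatialderivative}. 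The drift term is then identified region by region. In $\mathcal C^{[n]}_y=\{x<b_n(t,y)\}$, \eqref{eq:weak_PDE} gives $-\partial_yf(\cdot,y)$. In the interior of $\mathcal S^{[n]}_y$, where $u_n\equiv c_0$, it gives $-rc_0$. The boundary $\{x=b_n(t,y)\}$ should carry no time for $X^{[n]}$: $b_n(\cdot,y)$ is continuous by Theorem \ref{prop:jointlycontinuityb} and monotone, so it is of bounded variation, and the law of $X^{[n]}_{t+s}$ has a density because $\sigma>0$. Using $u_n(T,\cdot,y)=c_0$ and writing $-\partial_yf\,\mathsf 1_{\{x<b_n\}}-rc_0\,\mathsf 1_{\{x\ge b_n\}}=-\partial_yf+(\partial_yf-rc_0)\mathsf 1_{\{x\ge b_n\}}$, rearranging yields \eqref{eq:integral_rep}. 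The expectations are finite by \ref{itm:A3}-(i), and the stochastic integral is a true martingale since $\partial_x u_n$ is bounded on compacts; a localisation by exit times from compacts, followed by dominated convergence, handles the unbounded state space.

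The main obstacle is that classical Itô, or Peskir's formula, is not available. We only know $u_n(\cdot,y)\in W^{1,2;p}_{\ell oc}$ inside $\mathcal C^{[n]}_y$, with no control on $\partial_t u_n$ or $\partial_{xx}u_n$ as $x\uparrow b_n$, and no convexity in $x$. I would prove the needed extension by mollification. Let $u^\varepsilon=u_n(\cdot,y)*\rho_\varepsilon$, apply classical Itô to $u^\varepsilon$, and test the PDE against the mollifier. Rather than controlling the second derivative near the boundary, I would integrate by parts in $x$ once and in $t$ once, so that only $u_n$ and $\partial_x u_n$ enter the terms straddling $b_n$; both are continuous and bounded. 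The role played by convexity in Peskir's argument would then be taken by continuity of $\partial_x u_n$ together with the bounded variation of $t\mapsto b_n(t,y)$.

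Concretely, I would write $u_n=u_n\mathsf 1_{\{x<b_n-\delta\}}+\dots$ via a smooth cutoff $\chi_\delta$ vanishing near the boundary. On the cutoff piece, Itô--Krylov applies because $u_n\in W^{1,2;p}$ on compacts away from the boundary. The error terms involve $\partial_x\chi_\delta\,\partial_x u_n$, $\partial_{xx}\chi_\delta\,(u_n-c_0)$ and $\partial_t\chi_\delta\,(u_n-c_0)$. Using $u_n-c_0=O(\delta)\cdot\sup|\partial_x u_n|$ in a $\delta$-strip, together with $\partial_x u_n\to0$ at the boundary by smooth fit and continuity, I expect these error terms to vanish as $\delta\to0$ in expectation, with occupation-time estimates of order $\delta$ for the strip.

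The integral equation then follows directly. Set $x=b_n(t,y)$ in \eqref{eq:integral_rep}; the point $(t,b_n(t,y),y)$ lies in $\mathcal S^{[n]}$, which is closed, so $u_n=c_0$ there. Subtracting $\e^{-r(T-t)}c_0$ from both sides gives \eqref{eq:integral_eq0}. The terminal condition $b_n(T-,y)=\bar x(y)$ is supplied by Theorem \ref{prop:jointlycontinuityb}. For $y=0$ or $y$ where $b_n=\pm\infty$, the statement is read with the obvious conventions, and the same computation, or a limit in $y$ using continuity in $y$, covers it.
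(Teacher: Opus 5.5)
Your plan is correct, but the route you commit to in the third paragraph is genuinely different from the paper's.

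\textbf{The paper's route.} The paper carries out what your second paragraph only sketches. It regularises $u_n$ globally by the one-sided box average over $[t-1/\m,t]\times[x-1/\m,x]$, so that $\partial_t u^{\m}_n$ and $\partial_{xx}u^{\m}_n$ are difference quotients of $u_n$ and $\partial_x u_n$ alone. It splits these according to the position of $b_n$, using the time-inverse $\varphi_n$ and the inner approximations $b_{\eps,n}$. It uses smooth fit, and the PDE only inside $\cC^{[n]}_y$, to arrive at \eqref{eq:pdeun2}. There the commutator $F^{\m}_n$ is bounded on compacts, using the interior $L^p$ bound on $\partial_{xx}u_n$ with $p>2$. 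It vanishes off $\{x=b_n(t,y)\}$ and off the jump times of $m^{[n-1]}$. Classical It\^o for $u^{\m}_n$, dominated convergence and the zero-occupation property \eqref{eq:probab} then finish.

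\textbf{Your route.} You apply It\^o--Krylov to $\chi_\delta(u_n-c_0)$. This is legitimate, because that function is $W^{1,2;p}$ on compacts of $\cC^{[n]}_y$ and $\sigma>0$. It bypasses the commutator and jump-time bookkeeping. Since $\chi_\delta\to\mathsf 1_{\{x<b_n(t,y)\}}$ pointwise, including on the boundary where $\chi_\delta=0$, you do not even need \eqref{eq:probab}.

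\textbf{What you pay, and where care is needed.} The price is a quantitative estimate in the strip, and two points need more care than your sketch suggests.
\begin{enumerate}
\item Since $b_n(\cdot,y)$ is only continuous and monotone, a smooth $\chi_\delta$ must be built from a time-mollified $\tilde b_\delta$ with $\sup|\tilde b_\delta-b_n|<\delta/2$. Then $\partial_t\chi_\delta$ is of order $|\tilde b'_\delta|/\delta$, not $1/\delta$. So the term $\partial_t\chi_\delta\,(u_n-c_0)$ has to be bounded by $\omega(\delta)\int_0^T|\tilde b'_\delta(s)|\,\ud s\le\omega(\delta)\,\mathrm{TV}(b_n(\cdot,y))$. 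Here $\omega(\delta)\to0$ is the smooth-fit modulus of $\partial_x u_n$ within distance $O(\delta)$ of the boundary. This is one place where bounded variation of $b_n(\cdot,y)$ genuinely enters.
\item The terms $\partial_x\chi_\delta\,\partial_x u_n$ and $\partial_{xx}\chi_\delta\,(u_n-c_0)$ are of size $\omega(\delta)/\delta$ on the strip, so you need an expected occupation time of order $\delta$. Krylov's $L^2$ estimate only gives $O(\sqrt\delta)$, which is not enough. The occupation-time formula does give $O(\delta)$ after localisation. Apply it to the semimartingale $X^{[n]}-\tilde b_\delta$, whose local times have expectation bounded uniformly in the level by Tanaka's formula and the total variation of $\tilde b_\delta$. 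This is the second use of bounded variation.
\item Also stop at $T-\eta$ first and then let $\eta\downarrow0$ using continuity of $u_n$. The Sobolev regularity is only available on compacts of $\cC^{[n]}_y$, and these exclude $t=T$.
\end{enumerate}

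\textbf{Comparison.} The paper's device is self-contained and needs only classical It\^o plus elementary real analysis on an explicit kernel. Yours is conceptually shorter, but rests on Krylov's theory together with these strip estimates.
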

\begin{remark}\label{rem:cvf}
A standard approach to obtaining an integral equation for the value function of an OS problem would generally rely on an application of a change of variable formula. The regularity of the value function in OS problems is not sufficient for the use of classical It\^o's formula and various extensions have been considered in the literature (see Subsection 1.1). None of those results is applicable in our case because we only have continuity of $u_n$ and $\partial_x u_n$ but we lack any knowledge of the regularity of $\partial_t u_n$ and $\partial_{xx}u_n$ across the optimal boundary $\partial \cC_y^{[n]}$. Moreover, even inside the continuation set $\cC_y^{[n]}$ we only have weak derivatives $\partial_t u_n$ and $\partial_{xx}u_n$ and 
therefore Peskir's formula (\cite{peskir2005change}) does not directly apply. Instead, we obtain a tailored change of variable formula that extends the one recently obtained in \cite{CDAarXiv2022}.

Notice that results in \cite{CDAarXiv2022} are not applicable to our case because they would require continuity of $\partial_t u_n$ and locally bounded $\partial_{xx}u_n$, whereas we can only rely upon $u \in W^{1,2;p}_{\ell oc}(\mathcal{C}_{y}^{[n]})$. 
\end{remark}

\begin{proof}
Fix $y \in [0,1]$. Let us consider an approximating sequence $(u^{\m}_n)_{\m \geq 1} \subset C^{1,2}([0,T] \times \mathbb{R})$ defined in the following way
\begin{equation}\label{eq:ApproximatingSequence}
    u^{\m}_n(t, x, y):= \m^{2} \int_{(t-\frac{1}{\m})^+}^{t} \int_{x-\frac{1}{\m}}^{x} u_n(s, z, y)\,\ud z\,\ud s.
\end{equation}
For $t\in(0,T]$ and $\m>1/t$,
the derivatives of $u^{\m}_n$ read as:
\begin{align}
    \partial_{t}u^{\m}_n(t, x, y) & = \m^2 \int_{x-\frac{1}{\m}}^{x}\left(u_n(t, z, y)-u_n\left(t-\frac{1}{\m}, z, y\right)\right)\,\ud z.\label{eq:derivativewrttime}\\
    \partial_{x}u^{\m}_n(t, x, y) &= \m^2 \int_{t-\frac{1}{\m}}^{t}\left(u_n(s, x, y)-u_n\left(s, x-\frac{1}{\m},y\right)\right)\,\ud s.\label{eq:firstderivativespace}\\
    \partial_{x x}u^{\m}_n(t, x, y) &= \m^2 \int_{t-\frac{1}{\m}}^{t} \left(\partial_{x} u_n(s, x, y) -\partial_x u_n\left(s, x - \frac{1}{\m}, y\right)\right)\,\ud s.\label{eq:secondderivativespace}
\end{align}

Let us observe that if $(t, x, y) \in [0,T] \times \Sigma$ with $x < b_n(t,y)$ (i.e., $(t,x)\in\cC_y^{[n]}$), then $[0,t] \times (-\infty, x]\subset\cC_y^{[n]}$. Since $u_n(\cdot,y)\in W^{1,2;p}_{\ell oc}(\cC_y^{[n]})$, then 
for each $(t, x) \in \mathcal{C}_{y}^{[n]}$ we can write: 
\begin{align}
    \partial_{t}u^{\m}_n(t, x, y) & = \m^2 \int_{t-\frac{1}{\m}}^{t} \int_{x-\frac{1}{\m}}^{x} \partial_t u_n(s, z, y)\,\ud z \,\ud s\label{eq:derivativewrttimeregularized}\\
    \partial_{x x}u^{\m}_n(t, x, y) & = \m^2 \int_{t-\frac{1}{\m}}^{t} \int_{x-\frac{1}{\m}}^{x} \partial_{xx}u_n(s, z, y)\,\ud z\,\ud s.\label{eq:secondderivativespaceregularized}
\end{align}
In addition, for any compact $K_{y} \subset \mathcal{C}_{y}^{[n]}$ we have:
\begin{equation*}
    \begin{split}
        &\lim_{\m \rightarrow \infty} \sup_{(t, x) \in K_{y}} \left(|u^{\m}_n-u_n|(t, x) + |\partial_x u^{\m}_n -\partial_x u_n|(t, x)\right) = 0,\\
        &\lim_{\m \rightarrow \infty} \partial_{x x}u^{\m}_n(t, x) = \partial_{xx}u_n(t, x),\,a.e.\ (t, x) \in \mathcal{C}_{y}^{[n]},\\
        &\lim_{\m \rightarrow \infty} \partial_t u^{\m}_n(t, x) = \partial_t u_n(t, x),\,a.e.\ (t, x) \in \mathcal{C}_{y}^{[n]}.
    \end{split}
\end{equation*}
We cannot directly write an analogue of \eqref{eq:derivativewrttimeregularized} and \eqref{eq:secondderivativespaceregularized} in the case in which $(t, x, y) \in [0,T] \times \Sigma$ with $x>b_n(t,y)$, due to the lack of regularity across the optimal boundary $b_n$. The procedure we propose in the rest of the proof is valid for every $(t, x, y) \in [0,T]\times\Sigma$.

We now introduce some useful quantities and we make some preliminary observations. First, for fixed $y\in[0,1]$, let
\begin{equation}
    x\mapsto\varphi_n(x, y):=\sup\{t \in [0,T]\,:\,b_n(t,y)>x\},
\end{equation}
be the right continuous (strictly decreasing) inverse of $b_n$ with respect to the time variable $t$. Notice that $x<b_n(t,y)\iff t<\varphi_n(x,y)$ and let $b_{\varepsilon,n} : [0,T] \times [0,1] \rightarrow \mathbb{R}$ be the function defined as:
\begin{equation}\label{eq:definitionbepsilon}
    b_{\varepsilon,n}(t, y) := b_n(t+\varepsilon, y) - \varepsilon.
\end{equation}
Because $b_n$ is non-increasing in the time variable we have
\begin{equation}\label{eq:monotonicityb}
    b_{\varepsilon, n}(t, y) < b_{\varepsilon^{'}, n}(t, y) < b_n(t,y),
\end{equation}
for all $0<\varepsilon'<\varepsilon$. Furthermore, by continuity of $b_n$, the increasing limit holds
\begin{equation}\label{eq:bnconv}
    b_n(t, y)=\lim_{\varepsilon \rightarrow 0}  b_{\varepsilon,n}(t, y),
\end{equation}
uniformly for $(t,y)$ in a compact set. Finally, we introduce the right-continuous (decreasing) inverse of $b_{\eps,n}$ with respect to the time variable as
\begin{equation}\label{eq:varphiepsilon}
   \varphi_{\eps,n}(x,y):=\sup\{t\in[0,T]:b_{\eps,n}(t,y)>x\}. 
\end{equation} 
In particular, we have for every $(x,y) \in \Sigma$ and all $0<\eps'<\eps$, 
\begin{equation*}
   \varphi_{\eps,n}(x,y)<\varphi_{\eps',n}(x,y)<\varphi_n(x,y).
\end{equation*} 
Then, the increasing limit holds
\begin{equation}\label{eq:phiconv}
   \varphi_n(x,y)=\lim_{\varepsilon \rightarrow 0} \varphi_{\eps,n}(x,y).
\end{equation}

At this point, using \eqref{eq:derivativewrttime}, \eqref{eq:firstderivativespace}, and \eqref{eq:secondderivativespace}, which are valid for all $(t, x, y) \in [0,T] \times \Sigma$, we have:
\begin{equation}\label{eq:giulia3}
\begin{aligned} 
        &(\partial_t u^{\m}_n+\frac{\sigma^2(\cdot)}{2}\partial_{xx}u^{\m}_n+a(\cdot,m^{[n-1]}(\cdot))\partial_{x}u^{\m}_n-ru^{\m}_n)(t,x,y)\\
        &=\m^2 \int_{x-\frac{1}{\m}}^{x} \left(u_n\left(t, z, y\right)-u_n\left(t-\frac{1}{\m}, z, y\right)\right)\,\ud z\\
        &\quad+\frac{\sigma^2(x)}{2} \m^2 \int_{t-\frac{1}{\m}}^{t} \left(\partial_{x}u_n\left(s, x, y\right)-\partial_{x}u_n\left(s, x-\frac{1}{\m}, y\right)\right)\,\ud s\\
        &\quad+a(x, m^{[n-1]}(t))\,\m^2\int_{t-\frac{1}{\m}}^{t} \left(u_n(s, x, y)-u_n\left(s, x-\frac{1}{\m}, y\right)\right)\,\ud s\\
        &\quad-r\,\m^2 \int_{t-\frac{1}{\m}}^{t}\int_{x-\frac{1}{\m}}^{x} u_n(s, z, y)\,\ud z \ud s.
\end{aligned}
\end{equation}
Now, we analyse separately the four terms on the right-hand side above. We proceed in four steps.

{\bf Step 1}. 
We rewrite the first integral by considering separately the following three cases:
\begin{equation*}
    t<\varphi_n(z,y),\quad t-\frac{1}{\m}<\varphi_n(z, y) \leq t,\quad\varphi_n(z, y) \leq t-\frac{1}{\m}.
\end{equation*}
Notice that the set $\{z:\varphi_n(z,y)=t\}$ is of zero Lebesgue measure by strict monotonicity of $\varphi_n(\cdot,y)$.
We have
\begin{equation}
\begin{aligned}
    &\m^2 \int_{x-\frac{1}{\m}}^{x} \left(u_n(t, z, y)-u_n\left(t-\frac{1}{\m}, z, y\right)\right)\,\ud z\nonumber\\
    &=\m^2 \int_{x-\frac{1}{\m}}^{x} 1_{\{t<\varphi_n(z,y)\}} \left(u_n(t, z, y)-u_n\left(t-\frac{1}{\m}, z, y\right)\right)\,\ud z\\
    &\quad+ \m^2 \int_{x-\frac{1}{\m}}^{x} 1_{\{t-\frac{1}{\m}<\varphi_n(z,y)<t\}} \left(u_n(t, z, y)-u_n\left(t-\frac{1}{\m}, z, y\right)\right)\,\ud z\\
    &\quad+ \m^2 \int_{x-\frac{1}{\m}}^{x} 1_{\{\varphi_n(z,y)<t-\frac{1}{\m}\}} \left(u_n(t, z, y)-u_n\left(t-\frac{1}{\m}, z, y\right)\right)\,\ud z.
\end{aligned}
\end{equation}
The third integral on the right-hand side is zero because $s>\varphi_n(z,y)\implies (s,z)\in\cC^{[n]}_y$ and
\begin{equation*}
    u_n(t, z, y)-u_n\Big(t-\frac{1}{\m}, z, y\Big)=0,\quad \text{when }\varphi_n(z,y)<t-\frac{1}{\m}.
\end{equation*}
For the second integral we notice that if $t-\frac{1}{\m}<\varphi_n(z,y)<t$, then 
\begin{equation*}
\begin{aligned}
    u_n(t, z, y)-u_n\Big(t-\frac{1}{\m}, z, y\Big)&=c_0-u_n\Big(t-\frac{1}{\m}, z, y\Big)\\
    &=u_n(\varphi_n(z,y), z, y)-u_n\Big(t-\frac{1}{\m}, z, y\Big),
\end{aligned}
\end{equation*}
because, by continuity, $u_n(s,z,y)=c_0$ for $s\in[\varphi_n(z,y),t]$.
Then we can write 
\begin{equation}\label{eq:giulia1}
\begin{aligned}
    &\m^2 \int_{x-\frac{1}{\m}}^{x} \left(u_n(t, z, y)-u_n\left(t-\frac{1}{\m}, z, y\right)\right)\,\ud z\\
    &=\m^2 \int_{x-\frac{1}{\m}}^{x} 1_{\{t<\varphi_n(z,y)\}} \left(u_n(t, z, y)-u_n\left(t-\frac{1}{\m}, z, y\right)\right)\,\ud z\\
    &\quad+ \m^2 \int_{x-\frac{1}{\m}}^{x} 1_{\{t-\frac{1}{\m}<\varphi_n(z,y) \leq t\}} \left(u_n(\varphi_n(z,y), z, y)-u_n\left(t-\frac{1}{\m}, z, y\right)\right)\,\ud z.
\end{aligned}
\end{equation}
In the remainder of this step we are going to show that 
\begin{equation}\label{eq:summary1}
\begin{aligned}
    &\m^2 \int_{x-\frac{1}{\m}}^{x} \left(u_n(t, z, y)-u_n\left(t-\frac{1}{\m}, z, y\right)\right)\,\ud z \\
    &= \lim_{\eps\to0}\m^2 \int_{x-\frac{1}{\m}}^{x} \int_{t-\frac{1}{\m}}^{\left[\varphi_{\varepsilon,n}(z,y) \vee \left(t-\frac{1}{\m}\right) \right] \wedge t}\partial_t u_n(s, z, y)\,\ud s\ud z,
\end{aligned}
\end{equation}
where the function $\varphi_{\varepsilon,n}(z,y)$ is defined in \eqref{eq:varphiepsilon} and the integrand $\partial_t u_n$ is well-defined because $s<\varphi_{\eps,n}(z,y)\implies(s,z)\in\cC^{[n]}_y$.
It is clear by \eqref{eq:phiconv} that:
\begin{equation*}
\begin{aligned}
   &\lim_{\varepsilon \rightarrow 0} \int_{t-\frac{1}{\m}}^{\left[\varphi_{\varepsilon,n}(z,y) \vee \left(t-\frac{1}{\m}\right) \right] \wedge t}\partial_t u_n(s, z, y)\,\ud s\\
   &= u_n\Big(\Big[\varphi_n(z,y) \vee \big(t-\frac{1}{\m}\big) \Big] \wedge t, z, y\Big)-u_n\left(t-\frac{1}{\m}, z, y\right).
\end{aligned}
\end{equation*}
In particular, when $t-\frac{1}{\m}<\varphi_n(z,y)<t$ the right-hand side above reads  
\begin{equation*}
   u_n\big(\varphi_n(z,y), z, y\big)-u_n\Big(t-\frac{1}{\m}, z, y\Big).
\end{equation*}
Moreover, thanks to the increasing limit in \eqref{eq:phiconv} we have, for $z\in\R$,
\begin{equation}
1_{\{t-\frac{1}{\m}<\varphi_n(z,y) \leq t\}}=\lim_{\eps\to0}1_{\{t-\frac{1}{\m}<\varphi_{\eps,n}(z,y)<t\}}\quad\text{and}\quad
1_{\{t<\varphi_n(z,y)\}}=\lim_{\eps\to0}1_{\{t<\varphi_{\eps,n}(z,y)\}}.
\end{equation}
By plugging these expressions into \eqref{eq:giulia1} and using dominated convergence, due to boundedness of $u_n$, we obtain:
\begin{equation*}
\begin{aligned}
    &\m^2 \int_{x-\frac{1}{\m}}^{x} \left(u_n(t, z, y)-u_n\left(t-\frac{1}{\m}, z, y\right)\right)\,\ud z\nonumber\\
    &=\lim_{\eps\to 0}\m^2 \int_{x-\frac{1}{\m}}^{x} 1_{\{t<\varphi_{\eps,n}(z,y)\}} \left(u_n(t, z, y)-u_n\left(t-\frac{1}{\m}, z, y\right)\right)\,\ud z\nonumber\\
    &\quad+ \lim_{\eps\to 0}\m^2 \int_{x-\frac{1}{\m}}^{x} 1_{\{t-\frac{1}{\m}<\varphi_{\eps,n}(z,y)<t\}} \left(u_n(\varphi_{\eps,n}(z,y), z, y)-u_n\left(t-\frac{1}{\m}, z, y\right)\right)\,\ud z\nonumber\\
    &=\lim_{\varepsilon \rightarrow 0} \m^2 \int_{x-\frac{1}{\m}}^{x} 1_{\{t<\varphi_{\varepsilon,n}(z,y)\}}\int_{t-\frac{1}{\m}}^t\partial_t u_n(s, z, y)\,\ud s\,\ud z\nonumber\\
    &\quad+\lim_{\varepsilon \rightarrow 0} \m^2 \int_{x-\frac{1}{\m}}^{x}
    1_{\{t-\frac{1}{\m}<\varphi_{\varepsilon,n}(z,y)<t\}}\int_{t-\frac{1}{\m}}^{\varphi_{\varepsilon,n}(z,y)}\partial_t u_n(s, z, y)\,\ud s\,\ud z\nonumber\\
    &=\lim_{\varepsilon \rightarrow 0} \m^2 \int_{x-\frac{1}{\m}}^{x}1_{\{t<\varphi_{\varepsilon,n}(z,y)\}}\int_{t-\frac{1}{\m}}^{t}
    1_{\{s<\varphi_{\varepsilon,n}(z,y)\}}\partial_t u_n(s, z, y)\,\ud s\,\ud z\nonumber\\
    &\quad+\lim_{\varepsilon \rightarrow 0} \m^2 \int_{x-\frac{1}{\m}}^{x}1_{\{t-\frac{1}{\m}<\varphi_{\varepsilon,n}(z,y)<t\}}\int_{t-\frac{1}{\m}}^{t}1_{\{s<\varphi_{\varepsilon,n}(z,y)\}}\partial_t u_n(s, z, y)\,\ud s\,\ud z
\end{aligned}
\end{equation*}
Now, combining the indicators as $1_{\{t<\varphi_{\varepsilon,n}(z,y)\}}+1_{\{t-\frac{1}{\m}<\varphi_{\varepsilon,n}(z,y)<t\}}=1_{\{t-\frac{1}{\m}<\varphi_{\varepsilon,n}(z,y)\}}$, we obtain
\begin{equation*}
\begin{aligned}
&\m^2 \int_{x-\frac{1}{\m}}^{x} \left(u_n(t, z, y)-u_n\left(t-\frac{1}{\m}, z, y\right)\right)\,\ud z\nonumber\\
    &=\lim_{\varepsilon\rightarrow0}\m^2 \int_{x-\frac{1}{\m}}^{x} 1_{\{t-\frac1\m < \varphi_{\eps,n}(z,y)\}} \int_{t-\frac{1}{\m}}^{t}1_{\{s<\varphi_{\eps,n}(z,y)\}}\partial_{t}u_n(s, z, y)\,\ud s\,\ud z\nonumber\\
    &=\lim_{\eps\to0}\m^2 \int_{x-\frac{1}{\m}}^{x}
    \int_{t-\frac{1}{\m}}^{t}1_{\{s<\varphi_{\eps,n}(z,y)\}}\partial_{t}u_n(s, z, y)\,\ud s\,\ud z,
    \end{aligned}
    \end{equation*}
where the second equality holds because $\{s<\varphi_{\eps,n}(z,y)\}\subset \{t-\frac1\m<\varphi_{\eps,n}(z,y)\}$ for $s\in(t-\frac1\m,t)$. Therefore, \eqref{eq:summary1} holds as claimed.

It is worth noticing that since 
$s<\varphi_{\eps,n}(z,y)\iff z<b_{\eps,n}(s,y)$, by construction, then by Fubini's theorem we can swap the order of integration in the final expression above and obtain also 
\begin{equation*}
\begin{aligned}
&\lim_{\eps\to0}\m^2 \int_{x-\frac{1}{\m}}^{x}
    \int_{t-\frac{1}{\m}}^{t}1_{\{s<\varphi_{\eps,n}(z,y)\}}\partial_{t}u_n(s, z, y)\,\ud s\,\ud z\\
    &=\lim_{\eps\to0}\m^2 \int_{t-\frac{1}{\m}}^{t}\int_{x-\frac{1}{\m}}^{x}
    1_{\{z<b_{\eps,n}(s,y)\}}\partial_{t}u_n(s, z, y)\,\ud z\,\ud s.
\end{aligned}
\end{equation*}

{\bf Step 2}. We now consider the second integral in \eqref{eq:giulia3}. Similarly to what we have done before, we rewrite the integral by considering separately the following cases:
\begin{equation*}
    x<b_n(t,y),\quad x-\frac{1}{\m} < b_n(t, y) \leq x,\quad b_n(t,y) \leq x-\frac{1}{\m}.
\end{equation*}
We have
\begin{equation}
\begin{aligned}
    &\m^2\int_{t-\frac{1}{\m}}^{t} \left(\partial_x u_n(s, x, y)-\partial_x u_n\left(s, x-\frac{1}{\m}, y\right)\right)\,\ud s\nonumber\\
    &=\m^2\int_{t-\frac{1}{\m}}^{t} 1_{\{x<b_n(s,y)\}}\left(\partial_x u_n(s, x, y)-\partial_x u_n\left(s, x-\frac{1}{\m}, y\right)\right)\,\ud s\label{eq:differencederivativeOne}\\
    &\quad+\m^2\int_{t-\frac{1}{\m}}^{t} 1_{\{x-\frac{1}{\m}<b_n(s,y)\leq x\}}\left(\partial_x u_n(s, x, y)-\partial_x u_n\left(s, x-\frac{1}{\m}, y\right)\right)\,\ud s\\
    &\quad+\m^2\int_{t-\frac{1}{\m}}^{t} 1_{\{b_n(s,y)\leq x-\frac{1}{\m}\}}\left(\partial_x u_n(s, x, y)-\partial_x u_n\left(s, x-\frac{1}{\m}, y\right)\right)\,\ud s.
\end{aligned}
\end{equation}
The final integral is zero because $x-1/\m\ge b_n(s,y)$ implies $(s,x-1/\m)\in\cS^{[n]}_y$ and $(s,x)\in\cS^{[n]}_y$.
The penultimate integral instead reduces to 
\begin{equation}\label{eq:differencederivativeTwoOne}
\begin{aligned}
    &-\m^2\int_{t-\frac{1}{\m}}^{t}1_{\{x-\frac{1}{\m}<b_n(s,y)\leq x\}}\partial_x u_n\left(s, x-\frac{1}{\m}, y\right)\,\ud s\\
     &=\m^2\int_{t-\frac{1}{\m}}^{t}1_{\{x-\frac{1}{\m}<b_n(s,y)\leq x\}}\Big(\partial_x u_n (s,b_n(s,y),y)-\partial_x u_n\left(s, x-\frac{1}{\m}, y\right)\Big)\,\ud s,
\end{aligned}
\end{equation}
because $u_n(s,x,y)=c_0$ and $\partial_x u_n(s,x,y)=0$ for $x-\frac{1}{\m} < b_n(s,y) \leq x$, by smooth-fit. Then we are left with 
\begin{equation}\label{eq:giulia2}
\begin{aligned}
&\m^2\int_{t-\frac{1}{\m}}^{t} \left(\partial_x u_n(s, x, y)-\partial_x u_n\left(s, x-\frac{1}{\m}, y\right)\right)\,\ud s\\
&=\m^2\int_{t-\frac{1}{\m}}^{t} 1_{\{x<b_n(s,y)\}}\left(\partial_x u_n(s, x, y)-\partial_x u_n\left(s, x-\frac{1}{\m}, y\right)\right)\,\ud s\\
&\quad+\m^2\int_{t-\frac{1}{\m}}^{t} 1_{\{x-\frac{1}{\m}<b_n(s,y)\leq x\}}\left(\partial_x u_n(s, b_n(s,y), y)-\partial_x u_n\left(s, x-\frac{1}{\m}, y\right)\right)\,\ud s.
\end{aligned}
\end{equation}
In the remainder of this step we are going to show that 
\begin{equation}\label{eq:summary2}
\begin{aligned}
&\m^2\int_{t-\frac{1}{\m}}^{t} \left(\partial_x u_n(s, x, y)-\partial_x u_n\left(s, x-\frac{1}{\m}, y\right)\right)\,\ud s\\
&= \lim_{\varepsilon\rightarrow 0}\int_{t-\frac{1}{\m}}^{t} \int_{x-\frac{1}{\m}}^{[b_{\varepsilon,n}(s,y) \wedge x] \vee \left(x-\frac{1}{\m}\right)} \partial_{xx}u_n(s, z, y)\,\ud z\,\ud s,
\end{aligned}
\end{equation}
where $b_{\varepsilon,n}(t,y)$ is defined in \eqref{eq:definitionbepsilon} and the second order derivative under the integral is well-defined because $z<b_{\eps,n}(s,y)\implies(s,z)\in\cC^{[n]}_y$.

It is clear that for $x-1/\m<b_n(s,y)$,
\begin{equation*}
\partial_x u_n(s, b_n(s,y), y)-\partial_x u_n\left(s, x-\frac{1}{\m}, y\right)= \lim_{\varepsilon\rightarrow0}    \int_{x-\frac{1}{\m}}^{b_{\varepsilon,n}(s,y) } \partial_{xx}u_n(s, z, y)\,\ud z.
\end{equation*}
Moreover, for all $s\in[t-\frac1\m,t]$, we have
\begin{equation}\label{eq:madda2}
    1_{\{x<b_n(s,y)\}}=\lim_{\varepsilon \rightarrow 0}1_{\{x<b_{\varepsilon,n}(s,y)\}}\quad \text{and}\quad 1_{\{x-\frac{1}{\m}<b_n(s,y)\leq x\}} = \lim_{\varepsilon\rightarrow 0}1_{\{x-\frac{1}{\m}<b_{\varepsilon,n}(s,y)\leq x\}}.
\end{equation}
Both limits follow by the pointwise, increasing convergence of $b_{\varepsilon, n}$ to $b_n$ in \eqref{eq:bnconv}. 
By plugging these expressions into \eqref{eq:giulia2}, we obtain: 
\begin{equation*}
\begin{aligned}
    &\m^2 \int_{t-\frac{1}{\m}}^{t}\left(\partial_x u_n(s, x, y)-\partial_x u_n\left(s, x-\frac{1}{\m}, y\right)\right)\,\ud s\\
    &=\lim_{\varepsilon\rightarrow0} \m^2\int_{t-\frac{1}{\m}}^{t} 1_{\{x<b_{\varepsilon,n}(s,y)\}}\int_{x-\frac{1}{\m}}^{x}\partial_{x x}u_n(s, z, y)\,\ud z\,\ud s\\
&\quad+\lim_{\varepsilon\rightarrow0}\m^2\int_{t-\frac{1}{\m}}^{t}1_{\{x-\frac{1}{\m}<b_{\varepsilon,n}(s,y)\leq x\}}\int_{x-\frac{1}{\m}}^{b_{\varepsilon,n}(s,y)}\partial_{x x}u_n(s, z, y)\,\ud z\,\ud s\\
    &=\lim_{\varepsilon\rightarrow0} \m^2\int_{t-\frac{1}{\m}}^{t} 1_{\{x<b_{\varepsilon,n}(s,y)\}}\int_{x-\frac{1}{\m}}^{x}1_{\{z<b_{\varepsilon,n}(s,y)\}}\partial_{x x}u_n(s, z, y)\,\ud z\,\ud s\nonumber\\
    &\quad+\lim_{\varepsilon\rightarrow0}\m^2\int_{t-\frac{1}{\m}}^{t}1_{\{x-\frac{1}{\m}<b_{\varepsilon,n}(s,y)\leq x\}}\int_{x-\frac{1}{\m}}^{x}1_{\{z<b_{\varepsilon,n}(s,y)\}}\partial_{x x}u_n(s, z, y)\,\ud z\,\ud s.
\end{aligned}
\end{equation*}
Now, recombining the indicator functions $1_{\{x<b_{\varepsilon,n}(s,y)\}}+1_{\{x-\frac{1}{\m}<b_{\varepsilon,n}(s,y)\leq x\}}=1_{\{x-\frac{1}{\m}<b_{\varepsilon,n}(s,y)\}}$ and noticing that, for $z\in(x-\frac1\m,x)$, 
$1_{\{x-\frac{1}{\m}<b_{\varepsilon,n}(s,y)\}}1_{\{z<b_{\varepsilon,n}(s,y)\}}=1_{\{x-\frac1\m <z<b_{\varepsilon,n}(s,y)\}}=1_{\{z<b_{\varepsilon,n}(s,y)\}}$, 
we obtain
\begin{equation*}
   \begin{aligned}
   \m^2\! \int_{t-\frac{1}{\m}}^{t}\!\!\left(\partial_x u_n(s, x, y)\!-\!\partial_x u_n\left(s, x\!-\!\frac{1}{\m}, y\right)\right)\,\ud s=\lim_{\varepsilon\rightarrow0}\m^2\!\int_{t-\frac{1}{\m}}^{t}\int_{x-\frac{1}{\m}}^{x}\!\!1_{\{z<b_{\varepsilon,n}(s,y)\}}\partial_{x x}u_n(s, z, y)\,\ud z\,\ud s.
    \end{aligned}
    \end{equation*}
Therefore, \eqref{eq:summary2} holds as claimed.
\vspace{+5pt}

{\bf Step 3}. Finally, we consider the third integral in \eqref{eq:giulia3}. By analogous arguments as those used in Step 2 and noticing that
\[
u_n(s, x, y)-u_n\left(s, x-\frac{1}{\m}, y\right)=0, \quad\text{for $b_n(s,y)\le x-\frac1\m$},
\]
we have
\begin{equation*}
\begin{aligned}
    &\m^2 \int_{t-\frac{1}{\m}}^{t}\left( u_n(s, x, y)-u_n\left(s, x-\frac{1}{\m}, y\right)\right)\,\ud s\\
    &=\m^2\int_{t-\frac{1}{\m}}^{t}1_{\{x<b_n(s,y)\}}\left(u_n(s, x, y)-u_n\left(s, x-\frac{1}{\m}, y\right)\right)\,\ud s\nonumber\\
    &\quad+\m^2\int_{t-\frac{1}{\m}}^{t}1_{\{x-\frac{1}{\m}<b_n(s,y)\leq x\}}\left(u_n(s, x, y)-u_n\left(s, x-\frac{1}{\m}, y\right)\right)\,\ud s\nonumber\\
    &=\lim_{\varepsilon\rightarrow0} \m^2\int_{t-\frac{1}{\m}}^{t} 1_{\{x<b_{\varepsilon,n}(s,y)\}}\int_{x-\frac{1}{\m}}^{x} \partial_{x }u_n(s, z, y)\,\ud z\,\ud s\nonumber\\
&\quad+\lim_{\varepsilon\rightarrow0}\m^2\int_{t-\frac{1}{\m}}^{t}1_{\{x-\frac{1}{\m}<b_{\varepsilon,n}(s,y)\leq x\}}\int_{x-\frac{1}{\m}}^{b_{\varepsilon,n}(s,y)}\partial_{x }u_n(s, z, y)\,\ud z\,\ud s\\
&=\lim_{\varepsilon\rightarrow0} \m^2 \int_{t-\frac{1}{\m}}^{t}
    1_{\{x-\frac{1}{\m}<b_{\varepsilon,n}(s,y)\}}\int_{x-\frac{1}{\m}}^{x}
    1_{\{z<b_{\varepsilon,n}(s,z)\}}\partial_{x}u_n(s, z, y)\,\ud z\,\ud s\nonumber\\ &=\lim_{\varepsilon\rightarrow0}\m^2\int_{t-\frac{1}{\m}}^{t}\int_{x-\frac{1}{\m}}^{x}1_{\{z<b_{\varepsilon,n}(s,y)\}}\partial_{x}u_n(s, z, y)\,\ud z\,\ud s.
   \end{aligned}
\end{equation*}

{\bf Step 4}. Now we combine the expressions from the previous three steps into \eqref{eq:giulia3}. For $(t,x,y) \in [0,T] \times \Sigma$ we get
\begin{equation}
\begin{aligned}\label{eq::ito1}
        &(\partial_t u^{\m}_n+\frac{\sigma^2(\cdot)}{2}\partial_{xx}u^{\m}_n+a(\cdot,m^{[n-1]}(\cdot))\partial_{x}u^{\m}_n-ru^{\m}_n)(t,x,y)\\
        &=\lim_{\varepsilon\rightarrow0}\m^2\int_{t-\frac{1}{\m}}^{t}\!\!\int_{x-\frac{1}{\m}}^{x}\!\!\!  1_{\{z<b_{\varepsilon,n}(s,y)\}}\!\left(\!\partial_t u_n\!+\!\frac{\sigma^2(x)}{2}\partial_{xx}u_n\!+\!a(x,m^{[n-1]}(t))\partial_x u_n\! -\!r u_n\!\right)\!(s, z, y)\,\ud z\,\ud s\\
        &\quad-r \m^2 \int_{t-\frac{1}{\m}}^{t}\int_{x-\frac{1}{\m}}^{x} 1_{\{z \geq b_n(s,y)\}}u_n(s, z, y)\,\ud z\,\ud s.
\end{aligned}
\end{equation}
       First we notice that 
        \[
      \m^2 \int_{t-\frac{1}{\m}}^{t}\int_{x-\frac{1}{\m}}^{x} 1_{\{z \geq b_n(s,y)\}}u_n(s, z, y)\,\ud z\,\ud s=\m^2 \int_{t-\frac{1}{\m}}^{t}\int_{x-\frac{1}{\m}}^{x} 1_{\{z \geq b_n(s,y)\}}c_0\,\ud z\,\ud s.  
        \]
Then we notice that the coefficients $\sigma^2(x)$ and $a(m^{[n-1]}(t),x)$ inside the first integral on the right-hand side of \eqref{eq::ito1} depend on $(t,x)$ whereas we would like them to depend on $(s,y)$ in order to exploit the PDE for $u_n$ (cf.\ \eqref{eq:weak_PDE}). Then, adding and subtracting terms we continue from the right-hand side of \eqref{eq::ito1} with
\begin{equation}\label{eq:pdeun}
\begin{aligned}
&\lim_{\varepsilon\rightarrow0}\m^2 \int_{t-\frac{1}{\m}}^{t}\int_{x-\frac{1}{\m}}^{x} 1_{\{z<b_{\varepsilon,n}(s,y)\}}\left(\partial_t u_n+\frac{\sigma^2(\,\cdot\,)}{2}\partial_{xx}u_n+a(\cdot, m^{[n-1]}(\,\cdot\,))\partial_x u_n -r u_n\right)(s, z, y)\,\ud z\,\ud s\\   &\quad+\lim_{\varepsilon\rightarrow0}\m^2
\int_{t-\frac{1}{\m}}^{t}\int_{x-\frac{1}{\m}}^{x} 1_{\{z<b_{\varepsilon,n}(s,y)\}}\frac{\sigma^2(x)-\sigma^2(z)}{2}\partial_{xx}u_n(s, z, y)\,\ud z\,\ud s\\    &\quad+\lim_{\varepsilon\rightarrow0}\m^2
\int_{x-\frac{1}{\m}}^{x}\int_{t-\frac{1}{\m}}^{t} 1_{\{z<b_{\varepsilon,n}(s,y)\}}[a(x,m^{[n-1]}(t))-a(z,m^{[n-1]}(s))]\partial_x u_n(s, z, y)\,\ud s\,\ud z\\
&\quad-r\m^2\int_{t-\frac{1}{\m}}^{t}\int_{x-\frac{1}{\m}}^{x} 1_{\{z \geq b_n(s,y)\}}c_0\,\ud z\,\ud s\\
&=-\lim_{\varepsilon\rightarrow0}\m^2 \int_{t-\frac{1}{\m}}^{t}\int_{x-\frac{1}{\m}}^{x} \left(1_{\{z<b_{\varepsilon,n}(s,y)\}}\partial_{y}f(z,y)+1_{\{z\geq b_{\varepsilon,n}(s,y)\}}r c_0\right) \,\ud z\,\ud s +\lim_{\varepsilon\rightarrow0}F_{\varepsilon,n}^{\m}(t,x),
        \end{aligned}
\end{equation}
where for the equality we used that $u_n$ solves \eqref{eq:weak_PDE} in $\{z<b_n(s,y)\}$ and we introduced:
\begin{equation*}
\begin{aligned}
    F_{\varepsilon,n}^{\m}(t,x)&:=\m^2\int_{t-\frac{1}{\m}}^{t}\int_{x-\frac{1}{\m}}^{x}1_{\{z<b_{\varepsilon,n}(s,y)\}}\frac{\sigma^2(x)-\sigma^2(z)}{2}\partial_{xx}u_n(s, z, y)\,\ud z\,\ud s\\
         &\quad+\m^2\int_{t-\frac{1}{\m}}^{t}\int_{x-\frac{1}{\m}}^{x} 1_{\{z<b_{\varepsilon,n}(s,y)\}}\left[a(x,m^{[n-1]}(t))-a(z,m^{[n-1]}(s))\right]\partial_x u_n(s, z, y)\,\ud z\,\ud s.
\end{aligned}
\end{equation*}
Of course $F^\m_{\eps,n}(t,x)$ also depends on $y$, but since the latter variable is fixed, we avoid a heavier notation.

To show that we can pass to the limit in $\eps$, first, and then in $\m$, it is convenient to rewrite the first integral above removing the second-order derivative from $u_n$, which we cannot control.  Integrating by parts and using the fact that $\sigma^2(\cdot)\in C^{1}(\mathbb{R})$ we obtain:
\begin{equation}\label{eq:madda3}
\begin{aligned}
        &\m^{2} \int_{t-\frac{1}{\m}}^{t}\int_{x-\frac{1}{\m}}^{x} 1_{\{z<b_{\eps,n}(s,y)\}}\frac{\sigma^2(x)-\sigma^2(z)}{2}\,\partial_{xx}u_n(s, z, y)\,\ud z\,\ud s\\
        &= \m^2 \int_{t-\frac{1}{\m}}^{t}\frac{\sigma^2(x)-\sigma^2(x\wedge b_{\eps,n}(s,y))}{2}\partial_x u_n(s, x \wedge b_{\eps,n}(s,y),y)\,\ud s\\
        &\quad-\m^{2} \int_{t-\frac{1}{\m}}^{t}\frac{\sigma^2(x)-\sigma^2\left(\left(x-\frac{1}{\m}\right)  \wedge
        b_{\eps,n}(s,y)\right)}{2}\partial_x u_n\left(s, \left(x-\frac{1}{\m}\right)  \wedge b_{\eps,n}(s,y), y\right)\,\ud s\\
        &\quad+\m^{2} \int_{t-\frac{1}{\m}}^{t}\int_{x-\frac{1}{\m}}^{x}1_{\{z<b_{\eps,n}(s,y)\}}\sigma(z)\partial_x\sigma(z)\partial_x u_n(s, z, y)\,\ud z\,\ud s.
\end{aligned}
\end{equation}
Then, plugging \eqref{eq:madda3} into the expression for $F^\m_{\eps,n}(t,x)$ we see that we can let $\eps\to 0$ and use continuity of $\sigma$, $\partial_x\sigma$, $\partial_x u_n$ and \eqref{eq:madda2} to obtain 
\begin{equation}\label{eq::effeennefirst}
\begin{aligned}
    &\lim_{\eps\to 0}F^{\m}_{\eps,n}(t,x)\\
    &= -\m^{2} \int_{t-\frac{1}{\m}}^{t}\frac{\sigma^2(x)-\sigma^2\left(\left(x-\frac{1}{\m}\right)  \wedge
        b_{n}(s,y)\right)}{2}\partial_x u_n\left(s, \left(x-\frac{1}{\m}\right) \wedge b_{n}(s,y), y\right)\,\ud s \\
    &\quad+\m^{2} \int_{t-\frac{1}{\m}}^{t}\int_{x-\frac{1}{\m}}^{x}1_{\{z<b_{n}(s,y)\}}\sigma(z)\partial_x\sigma(z)\partial_x u_n(s, z, y)\,\ud z\,\ud s\\
     &\quad+\m^2\int_{t-\frac{1}{\m}}^{t}\int_{x-\frac{1}{\m}}^{x}1_{\{z<b_n(s, y)\}}\left[a(x,m^{[n-1]}(t))-a(z,m^{[n-1]}(s))\right]\partial_x u_n(s, z, y)\,\ud z\,\ud s\\
     &\eqqcolon F^\m_{n}(t,x),
\end{aligned}
\end{equation}
where, in particular, the first integral in \eqref{eq:madda3} vanishes because
\[
\m^2 \int_{t-\frac{1}{\m}}^{t}\frac{\sigma^2(x)-\sigma^2(x\wedge b_{n}(s,y))}{2}\partial_x u_n(s, x \wedge b_{n}(s,y),y)\,\ud s=0,
\]
due to $\partial_x u_n(s,z,y)=0$ for $z\ge b_n(s,y)$. Notice that the first two integrals in the expression for $F^\m_n(t,x)$ recombine to yield
\begin{equation}\label{eq:star}
\begin{aligned}
    \m^{2} \int_{t-\frac{1}{\m}}^{t}\int_{x-\frac{1}{\m}}^{x} 1_{\{z<b_{n}(s,y)\}}\frac{\sigma^2(x)-\sigma^2(z)}{2}\,\partial_{xx}u_n(s, z, y)\,\ud z\,\ud s.
\end{aligned}
\end{equation}
It is worth noticing for later use that \eqref{eq:pdeun} then reads as
\begin{equation}\label{eq:pdeun2}
\begin{aligned}
&(\partial_t u^{\m}_n+\frac{\sigma^2(\cdot)}{2}\partial_{xx}u^{\m}_n+a(\cdot,m^{[n-1]}(\cdot))\partial_{x}u^{\m}_n-ru^{\m}_n)(t,x,y)\\
&=-\m^2 \int_{t-\frac{1}{\m}}^{t}\int_{x-\frac{1}{\m}}^{x} \left(1_{\{z<b_{n}(s,y)\}}\partial_{y}f(z,y)+1_{\{z\geq b_{n}(s,y)\}}r c_0\right) \,\ud z\,\ud s +F_{n}^{\m}(t,x).
\end{aligned}
\end{equation}

At this point, we show that for any compact $K\subset[0,T]\times\R$
\begin{equation}\label{eq:boundeffeepsilon}
    |F^{\m}_{n}(t,x)| \leq c_{K},\quad \forall \m\in\N,
\end{equation}
where $c_{K}>0$ is a constant depending only on the compact (and possibly on the fixed value of $y\in[0,1]$).   
Recalling Proposition \ref{prop:continuityspatialderivative} and Assumption \ref{itm:A1}-(ii) we set $M_{K}:=\sup_{(s,z)\in K}|\partial_x u_n(s,z,y)|$, $S_K:=\sup_{(s,z) \in K}|\sigma(z)|$, and $S_K':=\sup_{(s,z) \in K}|\partial_x\sigma(z)|$, where we slightly abuse notation by considering $\sigma$ and $\partial_x\sigma$ as functions of two variables. For the second integral in \eqref{eq::effeennefirst} we have:
\begin{equation*}
        \left|\m^{2} \int_{t-\frac{1}{\m}}^{t}\int_{x-\frac{1}{\m}}^{x}1_{\{z<b_n(s,y)\}}\sigma(z)\partial_x\sigma(z)\partial_x u_n(s, z, y)\,\ud z\,\ud s\right|\leq S_K S^{'}_{K} M_{K}:=C_K.
\end{equation*}
For the third integral in \eqref{eq::effeennefirst}, we use Assumption \ref{itm:A1}--(i) and Proposition \ref{prop:continuityspatialderivative} to conclude that
\begin{equation}
\begin{aligned}\label{eq:final1}
     &\left|  \m^{2}\int_{t-\frac{1}{\m}}^{t}\int_{x-\frac{1}{\m}}^{x}1_{\{z<b_n(s,y)\}}[a(x, m^{[n-1]}(t))-a(z, m^{[n-1]}(s))]\,\partial_x u_n(s, z, y)\,\ud z\,\ud s\right|\nonumber\\
     &\leq \m^{2} M_K\int_{t-\frac{1}{\m}}^{t}\int_{x-\frac{1}{\m}}^{x}( L(a) (|x-z|+|m^{[n-1]}(t)-m^{[n-1]}(s)|)\,\ud z\,\ud s\nonumber\\
     &\leq \frac{M_K}{\m}L(a)+2 M_K L(a):=C'_K,
\end{aligned}
\end{equation}
upon using that $|x-z|\leq 1/\m$ and $|m^{[n-1]}(\cdot)|\le 1$. Finally, for the first integral in the formula for $F^\m_n(t,x)$ we only need to consider the case $b_n(s,y)>x-\frac{1}{\m}$, because otherwise the integrand vanishes thanks to smooth-fit. Then, 
\begin{equation}
\begin{aligned}
& \left| \m^{2} \int_{t-\frac{1}{\m}}^{t}1_{\{b_{n}(s,y)>x-\frac{1}{\m}\}}   \frac{\sigma^2(x)-\sigma^2\left(x-\frac{1}{\m}\right)}{2}\partial_x u_n\left(s, x-\frac{1}{\m}, y\right)\,\ud s\right|\\
&\leq \m \frac{2S_K S_K^{'}}{2 \m} M_K = S_K S_K^{'} M_{K}:=C^{''}_{K},
\end{aligned}
\end{equation}
upon using that $\sigma^{2}$ is Lipschitz with constant bounded by $2 S_{K} S^{'}_{K}$. The calculations above show that the bound in \eqref{eq:boundeffeepsilon} holds true with $c_K:=C_K+C'_K+C^{''}_K$.

Next, we study the limit of $F^{\m}_{n}(t, x)$ as $\m \rightarrow \infty$. We start by noticing that limit is well-defined for $x\neq b_n(t,y)$: for $x>b_n(t,y)$, taking $\m$ sufficiently large we have $x-\frac1\m>b_n(s,y)$ for all $s\in[t-\frac1\m,t]$ by continuity and therefore 
$
\lim_{\m\to\infty} F^\m_{n}(t,x)=0
$;
instead, for  $x<b_n(t,y)$ it is convenient to undo the integration by parts in 
\eqref{eq::effeennefirst} and, using \eqref{eq:star}, rewrite 
\begin{equation*}
\begin{aligned}
F^\m_{n}(t,x)&=\m^2\int_{t-\frac1\m}^t\int_{x-\frac1\m}^x\frac{\sigma^2(x)-\sigma^2(z)}{2}\partial_{xx}u_n(s,z,y)1_{\{z<b_n(s,y)\}}\ud z\ud s\\
&\quad+\m^2\int_{t-\frac{1}{\m}}^{t}\int_{x-\frac{1}{\m}}^{x}1_{\{z<b_n(s, y)\}}\left[a(x,m^{[n-1]}(t))-a(z,m^{[n-1]}(s))\right]\partial_x u_n(s, z, y)\,\ud z\,\ud s.
\end{aligned}
\end{equation*}
Since $\sigma$ is Lipschitz and $\partial_{xx}u_n\in L^p_{\ell oc}(\cC_y)$, for $p>1$, the first term in the formula above can be upper bounded with
\begin{equation*}
\begin{aligned}
&\m^2\int_{t-\frac1\m}^t\int_{x-\frac1\m}^x\Big|\frac{\sigma^2(x)-\sigma^2(z)}{2}\partial_{xx}u_n(s,z,y)1_{\{z<b_n(s,y)\}}\Big|\ud z\ud s\\
&\le S_K S'_K\frac{1}{\m} \m^2 \Big(\frac{1}{\m}\Big)^{2-\frac2p}\big\|\partial_{xx}u_n(\cdot,y)\big\|_{L^p([0,t]\times[x-1,x])},
\end{aligned}
\end{equation*}
where we used  $[0,t]\times(-\infty,x]\times\{y\}\subset\cC_y$ for $x<b_n(t,y)$, combined with H\"older inequality. For $p>2$, the term on the right-hand side above vanishes when $\m\to\infty$.
Instead, the second term does not vanish and it yields
\begin{equation*}
\lim_{\m\rightarrow\infty} F^{\m}_{n}(t, x) = [a(x, m^{[n-1]}(t))-a(x, m^{[n-1]}(t-))]\partial_x u_n(t, x, y),
\end{equation*}
by the fundamental theorem of calculus.
The latter expression is equal to zero for $t\in[0,T]\setminus J_m$, where $J_m$ is the countable collection of jump times of $m^{[n-1]}(t)$.
In conclusion we have shown that 
\begin{equation}\label{eq:limFm}
\lim_{\m\to\infty} F^\m_{n}(t,x)=0,\quad \text{for $(t,x)\in[0,T]\times\R$ such that $x\neq b_n(t,y)$ and $t\notin J_m$}.
\end{equation}

Next we are going to use the results above to derive a formula which is the analogue of Dynkin's formula.
Let $(K^{(\ell)})_{\ell \in \mathbb{N}}$ be a sequence of compact sets increasing to $\mathbb{R}$ and define
\begin{equation*}
    \tau^{\ell}_{n}(t, x) := \inf\{s \geq 0 : X_{t+s}^{[n];t,x} \notin K^{(\ell)}\} \wedge (T-t).
\end{equation*}
In addition, let $(u^{\m}_n)_{\m \in \mathbb{N}}$ be the approximating sequence defined in \eqref{eq:ApproximatingSequence}. To simplify notation we let $\cL$ denote the second order differential operator
$\cL:= \frac{\sigma^2}{2}\partial_{xx}+a\partial_x-r$.
For each $\ell \in \mathbb{N}$, we can apply It\^o's lemma to obtain
\begin{equation}\label{eq::StepOne1}
    \begin{aligned}
        \e^{-r (s\wedge\tau^{\ell}_n)}u^{\m}_n(t+s\wedge\tau^{\ell}_n, X_{t+s\wedge\tau^{\ell}_n}^{[n];t, x}, y)
        &= u^{\m}_n(t, x, y) + \int_{0}^{s\wedge\tau^{\ell}_n} \e^{-r u}\mathcal{L}u^{\m}_n(t+u, X_{t+u}^{[n];t, x}, y)\,\ud u\\
        &\quad+ \int_{0}^{s\wedge\tau^{\ell}_n} \e^{-ru}\partial_{x}u^{\m}_n(t+u, X_{t+u}^{[n];t, x}, y)\sigma(X_{t+u}^{[n];t, x})\ud W_{t+u},
    \end{aligned}
\end{equation}
for all $(t, x, y)\in[0,T]\times\Sigma$ and $s \in [0, T-t]$. In particular, using \eqref{eq:pdeun2}
\begin{equation}
    \begin{aligned}
        &\mathcal{L}u^{\m}_n(t+u, X_{t+u}^{[n];t, x}, y)=\left(\partial_t u^{\m}_n + \frac{\sigma(\,\cdot\,)}{2}\partial_{x x}u^{\m}_n + a(\,\cdot\,,m^{[n-1]}(\,\cdot\,))\partial_{x}u^{\m}_n - r u^{\m}_n\right)(t+u, X_{t+u}^{[n];t, x}, y)\\
        &=-\m^2 \int_{t+u-\frac{1}{\m}}^{t+u}\int_{X_{t+u}^{[n];t,x}-\frac{1}{\m}}^{X_{t+u}^{[n];t,x}} \left(1_{\{z<b_n(v,y)\}}\partial_{y}f(z,y)+1_{\{z\geq b_n(v,y)\}}r c_0\right) \,\ud z\,\ud v +F_{n}^{\m}\big(t+u,X_{t+u}^{[n];t,x}\big).
    \end{aligned}
\end{equation}
Therefore, \eqref{eq::StepOne1} becomes:
\begin{equation*}
\begin{split}
    &\e^{-r (s\wedge\tau^{\ell}_n)}u^{\m}_n(t+s\wedge\tau^{\ell}_n, X_{t+s\wedge\tau^{\ell}_n}^{[n];t,x}, y)\\
    &=u^{\m}_n(t, x, y)\\
    &\quad- \m^2\int_{0}^{s\wedge\tau^{\ell}_n} \e^{-r u} \int_{t+u-\frac{1}{\m}}^{t+u}\int_{X_{t+u}^{[n];t,x}-\frac{1}{\m}}^{X_{t+u}^{[n];t,x}} \left(1_{\{z<b_n(v,y)\}}\partial_{y}f(z,y)+1_{\{z\geq b_n(v,y)\}}r c_0\right) \,\ud z\,\ud v\,\ud u\\
    &\quad+ \int_{0}^{s\wedge\tau^{\ell}_n} \e^{-r u}F_{n}^{\m}\big(t+u,X_{t+u}^{[n];t,x}\big)\,\ud u+ \int_{0}^{s\wedge\tau^{\ell}_n} \e^{-ru}\partial_{x}u^{\m}_n(t+u, X_{t+u}^{[n];t, x}, y)\sigma(X_{t+u}^{[n];t, x})\ud W_{t+u}.
\end{split}
\end{equation*}

Now we take expectation on both sides. We use that $\partial_{x} u(\cdot,\cdot,y)$ is bounded on $[0,T]\times K^{(\ell)}$ and pick $s=T-t$. That yields
\begin{equation*}
\begin{aligned}
    u^{\m}_n(t, x, y) &= \E\left[\e^{-r \tau^{(\ell)}_{n}}u^{\m}_n(t+\tau^{(\ell)}_{n}, X_{t+\tau^{(\ell)}_{n}}^{t,x}, y)\right]\\
    &\quad + \m^2\E\left[\int_{0}^{\tau^\ell_n} \e^{-r u} \int_{t+u-\frac{1}{\m}}^{t+u}\int_{X_{t+u}^{[n];t,x}-\frac{1}{\m}}^{X_{t+u}^{[n];t,x}} \left(1_{\{z<b_n(v,y)\}}\partial_{y}f(z,y)+1_{\{z\geq b_n(v,y)\}}r c_0\right) \,\ud z\,\ud v\,\ud u\right]\\
    &\quad-\E\left[\int_0^{s\wedge\tau^\ell_n}\e^{-r u} F_{n}^{\m}(t+u,X^{[n];t,x}_{t+u})\,\ud u\right].
\end{aligned}
\end{equation*}
Since the process $X_{t+\cdot \wedge \tau^{(\ell)}_{n}}^{t,x}$ is bound to evolve in $K^{(\ell)}$, we can use dominated convergence to take limits as $\m\rightarrow\infty$ on both sides of the previous equation. Because $s\mapsto b_n(t+s,y)$ is of bounded variation and $\sigma$ is continuous with $\sigma(x)>0$, $x\in\R$, then it is well known that (cf., e.g., \cite[Lemma 4.3]{KP} or \cite[Lemma A.1]{DGV}, for a statement directly applicable to our setup)
\begin{equation}\label{eq:probab}
\int_0^{T-t}\P\big(X_{t+s}^{[n];t,x}=b_n(t+s,y)\big)\ud s=0.
\end{equation}
Thus, we can use \eqref{eq:boundeffeepsilon}, \eqref{eq:limFm} and dominated convergence theorem to get 
\begin{equation*}
\begin{aligned}
    u_n(t, x, y) &=\E\Big[\e^{-r \tau^{(\ell)}_{n}}u_n\big(t+\tau^{(\ell)}_{n}, X_{t+\tau^{(\ell)}_{n}}^{t,x}, y\big)\Big]\\
&\quad+\E\Big[\int_{0}^{\tau^{(\ell)}_{n}}\e^{-r u}\left(1_{\{X_{t+s}^{[n];t,x}<b_n(s,y)\}}\partial_{y}f(X_{t+s}^{[n];t,x}, y) + 1_{\{X_{t+s}^{[n];t,x} \geq b_n(s,y)\}}r c_0 \right)\,\ud s \Big].
   \end{aligned}
\end{equation*}
Letting $\ell \uparrow \infty$ and observing that $\tau^{(\ell)}_{n} \uparrow (T-t)$ we can use dominated convergence once again (cf.\ Assumption \ref{itm:A3}-(i)), continuity of $u_n$ and $u_n(T,x)=c_0$ to obtain
\begin{equation*}
\begin{aligned}
u_n(t, x, y) &=\e^{-r (T-t)}c_0\\
&\quad+\E\Big[\int_{0}^{T-t}\e^{-r s}\left(\partial_{y}f(X_{t+s}^{[n];t, x}, y) + \left(r c_0- \partial_{y}f(X_{t+s}^{[n];t, x}, y)\right)1_{\{X_{t+s}^{[n];t,x}\geq b_n(s,y)\}}\right)\,\ud s\Big].
\end{aligned}
\end{equation*}
This proves \eqref{eq:integral_rep}. Evaluating the above expression at $x=b_n(t,y)$ yields \eqref{eq:integral_eq0} and concludes the proof of Theorem \ref{prop:Integralequation}.
\end{proof}

It is shown in \cite[Lemma 3.7 and Corollary 3.14]{CDAGLAAP2021} that $X^*_{t+s}:= \lim_{n\to\infty} X^{[n]}_{t+s}$ is well-defined $\P$-a.s.\ and 
\begin{equation}\label{eq:Xtilde}
X^*_{t+s}=x+\int_0^s a\big( X^*_{t+u},m^*(t+u)\big)\ud u+\int_0^s\sigma(X^*_{t+u})\ud W_{t+u},
\end{equation}
where $m^*$ is defined as in Theorem \ref{teo:existenceSolMFG}.
A simple corollary of the last theorem follows from the convergence of $b_n$ to $b$ (cf.\ Corollary \ref{cor:unifconv}). This corollary also motivates our numerical implementation of the solution for the integral equation of $b$ as it will be clarified later. 
\begin{corollary}\label{cor:inteq}
Letting $n\to\infty$ in \eqref{eq:integral_rep} and in \eqref{eq:integral_eq0}, those formulae hold with $(u_n,b_n,X^{[n]})$ replaced by $(u,b,X^*)$.
\end{corollary}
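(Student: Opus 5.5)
The plan is to pass to the limit $n\to\infty$ term by term in \eqref{eq:integral_rep}, for fixed $(t,x,y)$. Then we deduce the equation for $b$ by evaluating at $x=b(t,y)$. We already know three things. First, $u_n\to u$ pointwise, with $u$ continuous, as recalled from \cite{CDAGLAAP2021}. Second, $X^{[n];t,x}_{t+s}\to X^{*;t,x}_{t+s}$ $\P$-a.s. for each $s$, with $X^*$ solving \eqref{eq:Xtilde}. Third, $b_n\uparrow b$ uniformly on compacts of $[0,T]\times(0,1]$ by Corollary \ref{cor:unifconv}. The left-hand side of \eqref{eq:integral_rep} and the constant $\e^{-r(T-t)}c_0$ are handled directly. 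For the term $\E[\int_0^{T-t}\e^{-rs}\partial_yf(X^{[n]}_{t+s},y)\ud s]$ we use continuity of $\partial_y f$ in $x$ together with a.s. convergence of the flow. We then need uniform integrability, and we obtain it from Assumption \ref{itm:A3}-(i). That assumption gives an $L^p$ bound with $p>1$ uniformly over measurable $m$, so in particular uniformly over $m^{[n-1]}$. If needed, we also use the monotone comparison $X^{[n]}\downarrow X^*$ (since $m^{[n-1]}\downarrow m^*$ and $a$ is non-decreasing in $m$), which sandwiches $\partial_yf(X^{[n]},y)$ between $\partial_yf(X^*,y)$ and $\partial_yf(X^{[1]},y)$ by monotonicity of $\partial_y f$ in $x$.

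The delicate term is the one with the indicator $1_{\{X^{[n]}_{t+s}\ge b_n(t+s,y)\}}$. Its integrand is dominated by $|rc_0|+|\partial_yf(X^{[n]}_{t+s},y)|$, which is uniformly integrable by the previous step. So it suffices to show that the indicator converges to $1_{\{X^*_{t+s}\ge b(t+s,y)\}}$ for $\ud s\otimes\ud\P$-a.e. $(s,\omega)$. Fix $y\in(0,1]$. Since $b_n(t+s,y)\to b(t+s,y)$ and $X^{[n]}_{t+s}\to X^*_{t+s}$, the indicator converges on the event $\{X^*_{t+s}\neq b(t+s,y)\}$. The remaining event has zero $\ud s\otimes\ud\P$ measure by the analogue of \eqref{eq:probab} for $X^*$. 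That analogue holds because $s\mapsto b(t+s,y)$ is monotone, hence of bounded variation, and continuous (Theorem \ref{prop:jointlycontinuityb}), while $X^*$ is a nondegenerate diffusion with the bounded measurable drift input $m^*$; the cited results \cite{KP,DGV} apply verbatim. We then conclude \eqref{eq:integral_rep} for $(u,b,X^*)$ by dominated (Vitali) convergence. The case $y=0$ can be treated separately or excluded, since the boundary may be $-\infty$ there.

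For the integral equation we cannot simply substitute $x=b_n(t,y)$ and let $n\to\infty$, because the starting point moves with $n$. The cleaner route is to use the limiting representation just proved and evaluate it at $x=b(t,y)$. At that point $(t,b(t,y),y)\in\cS$, because $\cS$ is closed and by \eqref{eq:SSn}, so $u(t,b(t,y),y)=c_0$. Rearranging gives the equation with $b$, and the terminal value $b(T,y)=\bar x(y)$ comes from Theorem \ref{prop:jointlycontinuityb}. Alternatively, we can pass to the limit in \eqref{eq:integral_eq0} directly by using continuity of the flow in the initial point, since $b_n(t,y)\to b(t,y)$.

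I expect the main obstacle to be the uniform integrability of $\partial_yf(X^{[n]}_{t+s},y)$ in $n$. The phrase ``for any Borel $m$'' in \ref{itm:A3}-(i) gives finiteness for each $m$ but not obviously a bound uniform in $n$. I would first try the monotone sandwich described above, which only needs \ref{itm:A3}-(i) for $m^{[0]}$ and $m^*$ together with the comparison theorem for SDEs. A secondary point is verifying that \eqref{eq:probab} holds for $X^*$ with the jump-containing input $m^*$; this should be routine.
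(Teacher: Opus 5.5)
Your proposal is correct and follows the route the paper only sketches in one line. You pass to the limit in \eqref{eq:integral_rep} using $u_n\to u$, $X^{[n]}\to X^*$ a.s., $b_n\to b$ (Corollary \ref{cor:unifconv}), dominated convergence, and the occupation-time null-set property \eqref{eq:probab} for $X^*$. You are right that \ref{itm:A3}-(i) alone gives no bound uniform in $n$, and your sandwich supplies the domination: $0\le\partial_yf(X^{[n]}_{t+s},y)\le\partial_yf(X^{[0]}_{t+s},y)$, since $\partial_yf\ge0$ is increasing in $x$ and $X^{[n]}$ decreases in $n$. Evaluating the limiting representation at the deterministic point $x=b(t,y)$, where $u=c_0$, is also a slightly cleaner way to get the integral equation than letting $n\to\infty$ in \eqref{eq:integral_eq0} with the moving starting point $b_n(t,y)$.
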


Before stating the uniqueness theorem, we introduce the following class of functions: for $y\in[0,1]$
\begin{eqnarray}
B_y := \{\beta\in C([0,T])\,:\,\beta(T) = \bar{x}(y)\quad\text{and}\quad \beta(t)\ge\bar{x}(y) \quad \forall t\in[0,T)\}.
\label{eq:nume:class_unique_sol}
\end{eqnarray}

\begin{theorem}\label{th:UniquenessTheoremIntegralEquation}
Suppose Assumptions \ref{itm:A1}--\ref{itm:A3} hold. Then, for all $y\in[0,1]$, the function $b_n(\cdot,y)$ is the unique solution in the class $B_y$ of the integral equation \eqref{eq:integral_eq0}. Analogously, the function $b(\cdot,y)$ is the unique solution in the class $B_y$ of \eqref{eq:integral_eq0} with $X^{[n]}$ replaced by $X^*$.
\end{theorem}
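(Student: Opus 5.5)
We follow the classical uniqueness scheme of Peskir for integral equations of optimal stopping boundaries, adapted to the present setting with $y$ fixed. By Theorem \ref{prop:jointlycontinuityb} and the inclusion $\cH\subseteq\cC^{[n]}$, $b_n(\cdot,y)\in B_y$, and by Theorem \ref{prop:Integralequation} it solves \eqref{eq:integral_eq0}. So let $\beta\in B_y$ be another solution and define, for $(t,x)\in[0,T]\times\R$,
\begin{equation*}
u^\beta(t,x):=\e^{-r(T-t)}c_0+\E_{t,x}\Big[\int_0^{T-t}\e^{-rs}\Big(\partial_yf(X^{[n]}_{t+s},y)+\big(rc_0-\partial_yf(X^{[n]}_{t+s},y)\big)1_{\{X^{[n]}_{t+s}\ge\beta(t+s)\}}\Big)\ud s\Big].
\end{equation*}
Because $\beta$ solves \eqref{eq:integral_eq0}, $u^\beta(t,\beta(t))=c_0$. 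By the Markov property, $\e^{-rs}u^\beta(t+s,X_{t+s})+\int_0^s\e^{-ru}(\ldots)\,\ud u$ is a martingale; this is the only ``It\^o-type'' tool we need, so no regularity of $u^\beta$ across $\beta$ is required. We also use that $\partial_yf(x,y)-rc_0\ge0$ for $x\ge\bar x(y)$, so for $\beta\ge\bar x(y)$ the integrand $rc_0-\partial_yf$ is nonpositive on $\{x\ge\beta\}$.

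The steps are as follows. First, show $u^\beta=c_0$ on $\{x\ge\beta(t)\}$: with $\sigma_\beta=\inf\{s:X_{t+s}\le\beta(t+s)\}\wedge(T-t)$ and the martingale property, $u^\beta(t,x)=\E[\e^{-r\sigma_\beta}u^\beta(t+\sigma_\beta,X_{t+\sigma_\beta})+\int_0^{\sigma_\beta}\e^{-rs}rc_0\ud s]$. Continuity of $\beta$ gives $X_{t+\sigma_\beta}=\beta(t+\sigma_\beta)$ on $\{\sigma_\beta<T-t\}$, so the terminal value is $c_0$ in all cases, and the right-hand side equals $c_0$. Second, show $u^\beta\ge u_n$ (with $u_n$ as in \eqref{eq:integral_rep}) on $\{x<\beta(t)\}$. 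Take $\tau_\beta=\inf\{s:X_{t+s}\ge\beta(t+s)\}\wedge(T-t)$; then the martingale identity gives $u^\beta(t,x)=\E[\int_0^{\tau_\beta}\e^{-rs}\partial_yf\,\ud s+c_0\e^{-r\tau_\beta}]\ge u_n(t,x)$ by \eqref{eq:un}. Together these give $u^\beta\ge u_n$ everywhere, since $u_n\le c_0$.

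Third, show $\beta\le b_n$. Suppose $\beta(t)>b_n(t)$ for some $t$, and take $x$ with $b_n(t)\le x<\beta(t)$... To be precise, start at $x=\beta(t)\ge b_n(t)$, where $u^\beta=u_n=c_0$. Let $\sigma=\inf\{s:X_{t+s}\le b_n(t+s,y)\}\wedge(T-t)$, and apply the martingale identity to both $u^\beta$ (its own representation) and $u_n$ (by \eqref{eq:integral_rep}). Subtracting, and using $u^\beta\ge u_n$ at time $\sigma$, gives
\[\E\Big[\int_0^\sigma\e^{-rs}(\partial_yf-rc_0)(X_{t+s},y)1_{\{X_{t+s}<\beta(t+s)\}}\ud s\Big]\le0.\]
The integrand is strictly positive, because $X>b_n\ge\bar x(y)$ there. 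Continuity of $\beta,b_n$ and the nondegenerate diffusion imply $\P(\sigma>0,\ X$ spends positive time below $\beta)>0$, which is a contradiction.

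Fourth, show $\beta\ge b_n$. Suppose $\beta(t)<b_n(t)$, and take $x<\beta(t)$ close to it with $\beta<b_n$ on a neighbourhood. Compare $u^\beta$ and $u_n$ at $(t,x)$ up to $\tau^*=\inf\{s:X_{t+s}\ge b_n(t+s)\}$, where $u_n=c_0=u^\beta$ after step three. Using the martingale identities and $u^\beta(t,x)\ge u_n(t,x)$, we get $\E[\int_0^{\tau^*}\e^{-rs}(rc_0-\partial_yf)1_{\{X\ge\beta\}}\ud s]\ge0$. On that set $X\ge\beta\ge\bar x$, so the integrand is $\le0$, and strictly negative with positive probability when $X$ enters $(\beta,b_n)$ before $\tau^*$. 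Hence $\beta=b_n$.

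The main obstacle is the strict-inequality part of steps three and four. We must show the relevant sets carry positive expected occupation time, and handle the case $\partial_yf-rc_0=0$ at $x=\bar x(y)$; this is fine since $\partial_{xy}f>0$ makes the inequality strict away from $\bar x$. We must also verify continuity of $u^\beta$ and the hitting time conventions at $T$, using \eqref{eq:probab}.

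For $b$, the same argument applies verbatim, with $X^*$, $m^*$, $u$ and the representation of Corollary \ref{cor:inteq}, and with continuity of $b$ given by Theorem \ref{prop:jointlycontinuityb}.
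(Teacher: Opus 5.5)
Your proposal is correct and follows essentially the same four-step Peskir argument as the paper, with your Steps 1--3 matching the paper's almost verbatim (same stopping times, same martingale identities, same occupation-time contradiction in Step 3). The only difference is in Step 4. The paper starts from $x\in(\beta(t),b_n(t))$, so $u^\beta(t,x)=c_0>u_n(t,x)$ supplies the strict inequality directly and no second occupation-time argument is needed. Your choice $x<\beta(t)$ forces you to repeat that positive-probability argument; this is valid, just slightly longer.
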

\begin{proof}
The proof follows a well-established procedure which was originally introduced in \cite{P2005MF}. Here we provide a summary of the main arguments and we prove the claim only for $b_n$, because the proof for $b$ is analogous. Let $\widehat\cH:=[0,T]\times\cH$ with $\mathcal{H}$ defined in \eqref{eq:setH}. 
Then $\widehat{\mathcal{H}}\subset\mathcal{C}^{[n]}$ with
\[
\widehat{\cH}_y=\{(t,x)\in[0,T]\times\R:\partial_y f(x,y)-rc_0<0\}\subset\cC^{[n]}_y,\quad \forall y\in[0,1].
\]
The boundary of the $y$-section $\widehat{\mathcal{H}}_y$ reads $\mathrm{cl}(\widehat{\cH}_y)\cap\mathrm{cl}(\widehat{\cH}_y^c)=[0,T]\times \bar{x}(y)$. 

Let us assume that there exists $\gamma:[0,T]\times[0,1]\rightarrow\mathbb{R}$ such that $\gamma(\cdot,y) \in B_y$ and it solves the integral equation
\begin{equation*}
\begin{aligned}
c_0 &= c_0\e^{-r(T-t)} + \E_{t,\gamma(t,y)}\Big[\int_0^{T-t}\e^{-rs}\partial_yf(X^{[n]}_{t+s},y)\,\ud s\Big]\\
&\quad+\E_{t,\gamma(t,y)}\Big[\int_0^{T-t}\e^{-rs}\left(rc_0-\partial_yf(X^{[n]}_{t+s},y)\right)\mathsf{1}_{\{X^{[n]}_{t+s}\geq \gamma(t+s,y)\}}\,\ud s\Big],
\end{aligned}
\end{equation*}
for each $(t,y)\in[0,T)\times[0,1]$. Since $n\in\N$ is fixed throughout, we simplify our notation and write $X=X^{[n]}$. We set 
\begin{equation}
\begin{aligned}
U^{\gamma}(t,x,y) &= c_0\e^{-r(T-t)} + \E_{t,x}\Big[\int_0^{T-t}\e^{-rs}\partial_yf(X_{t+s},y)\,\ud s\Big]\\
&\quad +\E_{t,x}\Big[\int_0^{T-t}\e^{-rs}\big(rc_0-\partial_yf(X_{t+s},y)\big)\mathsf{1}_{\{X_{t+s}\geq \gamma(t+s,y)\}}\,\ud s\Big],
\end{aligned}
\end{equation}
for all $(t,x,y)\in[0,T]\times\Sigma$, then trivially $U^{\gamma}(T,x,y)=c_0$ and $U^{\gamma}(t,\gamma(t,y),y)=c_0$ by definition of $\gamma$. By continuity of the stochastic flow $(t,s,x)\mapsto X^{t,x}_{t+s}(\omega)$ and of the function $\partial_y f$, combined with \eqref{eq:probab}, it is easy to show continuity of $(t,x)\mapsto U^\gamma(t,x,y)$ for each $y\in[0,1]$. 
Moreover, by the strong Markov property of $X$ it is not hard to prove that the following processes are continuous martingales for each $y\in[0,1]$:
\begin{equation}\label{eq:Umart}
\begin{aligned}
s\mapsto \e^{-rs} &U^{\gamma}(t+s,X_{t+s},y)+\int_0^{s}\e^{-ru}\partial_yf(X_{t+u},y)\,\ud u\\
&\quad+\int_0^{s}\e^{-ru}\big(rc_0-\partial_yf(X_{t+u},y)\big)\mathsf{1}_{\{X_{t+u}\geq \gamma(t+u,y)\}}\,\ud u
\end{aligned}
\end{equation}
and
\begin{equation}\label{eq:umart}
\begin{aligned}
s\mapsto \e^{-rs} &u_n(t+s,X_{t+s},y)+\int_0^{s}\e^{-ru}\partial_yf(X_{t+u},y)\,\ud u\\
&\quad+\int_0^{s}\e^{-ru}\big(rc_0-\partial_yf(X_{t+u},y)\big)\mathsf{1}_{\{X_{t+u}\geq b_n(t+u,y)\}}\,\ud u.
\end{aligned}
\end{equation}
Next, we follow the four steps in the proof of \cite{P2005MF}.
\medskip

{\em Step 1:} Here we show that $U^{\gamma}(t,x,y)=c_0$ for all $x\geq\gamma(t,y)$. The result is trivial when $x=\gamma(t,y)$ or when $t=T$. 
Let us now consider $t<T$ and $x>\gamma(t,y)$. Define
\begin{equation}
\tau_{\gamma}\coloneqq\inf\{s\geq 0\,:\,X_{t+s}\leq \gamma(t+s,y)\}\wedge(T-t).
\nonumber
\end{equation}
Then by the martingale property \eqref{eq:Umart}
\begin{equation}\label{eq:Ug0}
U^{\gamma}(t,x,y)=\E_{t,x}\Big[\e^{-r\tau_{\gamma}}U^{\gamma}(t+\tau_{\gamma},X_{t+\tau_{\gamma}},y)\Big]+\E_{t,x}\Big[ c_0\big(1-\e^{-r\tau_{\gamma}}\big)\Big].
\end{equation}
By continuity of trajectories and of the function $U^\gamma(\cdot,y)$ we have $U^{\gamma}(t+\tau_{\gamma},X_{t+\tau_{\gamma}},y)=c_0$. Plugging the latter into \eqref{eq:Ug0} we conclude this step.
\medskip

{\em Step 2:} Here we show that $U^{\gamma}(t,x,y)\geq u_n(t,x,y)$ for all $(t,x,y)\in[0,T]\times\Sigma$. The inequality is trivial for $x\geq\gamma(t,y)$ and for $t=T$, thanks to the previous step. 
Let us consider $t<T$ and $x<\gamma(t,y)$. Define
\begin{equation}
\zeta^{\gamma}\coloneqq\inf\{s\geq 0\,:\,X_{t+s}\geq \gamma(t+s,y)\}\wedge(T-t).
\end{equation}
By the martingale property \eqref{eq:Umart}
\begin{equation*}
\begin{aligned}
U^{\gamma}(t,x,y)&= \E_{t,x}\Big[\e^{-r\zeta^{\gamma}}U^{\gamma}(t+\zeta^{\gamma},X_{t+\zeta^{\gamma}},y)
+\int_0^{\zeta^{\gamma}}\e^{-ru}\partial_yf(X_{t+u},y)\,\ud u\Big]\\
&=\E_{t,x}\Big[\e^{-r\zeta^{\gamma}}c_0+\int_0^{\zeta^{\gamma}}\e^{-ru}\partial_yf(X_{t+u},y)\,\ud u\Big]\\
&\geq u_n(t,x,y),
\end{aligned}
\end{equation*}
where in the second equality we used   $U^{\gamma}(t+\zeta^{\gamma},X_{t+\zeta^{\gamma}},y)=c_0$ and the last inequality is a consequence of the definition of $u_n$.
\medskip

{\em Step 3:} Here we show that $\gamma(t,y)\leq b_n(t,y)$ for all $(t,y)\in[0,T]\times[0,1]$. By definition of the class $B_y$ it is clear that $\gamma(T,y)= b_n(T,y)=\bar x(y)$. Arguing by contradiction let us assume that there exists $t\in[0,T)$ such that $\gamma(t,y)>b_n(t,y)$ and fix $x\geq \gamma(t,y)$. Define
\begin{equation}
\tau_{b,n}\coloneqq\inf\{s\geq 0\,:\,X_{t+s}\leq b_n(t+s,y)\}\wedge(T-t).
\nonumber
\end{equation}
By the martingale properties \eqref{eq:Umart} and \eqref{eq:umart} and using that $(U^\gamma-u_n)(t,x,y)=0$ for $x\ge \gamma(t,y)>b_n(t,y)$, we have
\begin{equation*}
\begin{aligned}
0&=(U^\gamma-u_n)(t,x,y)\\
&=\E_{t,x}\Big[\e^{-r\tau_{b,n}} \big(U^{\gamma}-u_n\big)(t+\tau_{b,n},X_{t+\tau_{b,n}},y)\!-\!\!\int_0^{\tau_{b,n}}\!\!\!\!\e^{-ru}\big(rc_0-\partial_yf(X_{t+u},y)\big)\mathsf{1}_{\{X_{t+u} < \gamma(t+u,y)\}}\,\ud u\Big]\\
&\ge -\E_{t,x}\Big[\int_0^{\tau_{b,n}}\!\!\!\!\e^{-ru}\big(rc_0-\partial_yf(X_{t+u},y)\big)\mathsf{1}_{\{X_{t+u} < \gamma(t+u,y)\}}\,\ud u\Big],
\end{aligned}
\end{equation*}
where the inequality holds because $U^{\gamma} \geq u_n$, by Step 2. Since $x \geq \gamma(t,y) > b_n(t,y)$ and $s\mapsto X_{t+s}-b_n(t+s,y)$ is continuous, $\P_{t,x}(\tau_{b,n}>0)=1$. Moreover, by continuity of $b_n(\cdot,y)$ and $\gamma(\cdot,y)$ and the assumption $\gamma(t,y)>b_n(t,y)$, there exists $\delta,\varepsilon>0$ such that $\gamma(t+s,y)-b_n(t+s,y)\geq \varepsilon$, for all $s \in [0,\delta]$. Thanks to the non-degeneracy of the diffusion coefficient, with strictly positive probability the process enters the open strip $\{(s,z)\,:\,b_n(t+s,y)< z < \gamma(t+s,y)\}$ before 
$\tau_{b,n}$ and it spends a nontrivial time interval in there. Hence, there must be $0<s_0<s_1$ such that 
\[
\P_{t,x}\Big(\big\{b_n(t+s)<X_{t+s}<\gamma(t+s,y),\forall s\in[s_0,s_1]\big\}\cap\big\{s_1<\tau_{b,n}\big\}\Big)>0.
\]
Because $rc_0-\partial_yf(x,y)<0$ when $x> b_n(t,y)$, we reach a contradiction: 
\[
0\le\E_{t,x}\Big[\int_0^{\tau_{b,n}}\!\!\!\!\e^{-ru}\big(rc_0-\partial_yf(X_{t+u},y)\big)\mathsf{1}_{\{X_{t+u} < \gamma(t+u,y)\}}\,\ud u\Big]<0,
\]
and it must be $\gamma(t,y)\le b_n(t,y)$.
\medskip

{\em Step 4:} Here we show that $\gamma(t,y)= b_n(t,y)$. We know from the previous step that $\gamma\le b_n$. Arguing by contradiction let us assume that there exists $t\in[0,T)$ such that $\gamma(t,y)<b_n(t,y)$ and let $ \gamma(t,y)<x<b_n(t,y)$. Define
\begin{equation}
\zeta_{b,n}\coloneqq\inf\{s\geq 0\,:\,X_{t+s}\geq b_n(t+s,y)\}\wedge(T-t).
\end{equation} 
By the martingale properties \eqref{eq:Umart} and \eqref{eq:umart} and using that $U^\gamma(t,x,y)=c_0>u_n(t,x,y)$ for $\gamma(t,y)<x<b_n(t,y)$, we have
\begin{equation*}
\begin{aligned}
0&<(U^\gamma-u_n)(t,x,y)\\
&=\E_{t,x}\Big[\e^{-r\zeta_{b,n}} \big(U^{\gamma}-u_n\big)(t+\zeta_{b,n},X_{t+\zeta_{b,n}},y)\!+\!\!\int_0^{\zeta_{b,n}}\!\!\!\!\e^{-ru}\big(rc_0-\partial_yf(X_{t+u},y)\big)\mathsf{1}_{\{X_{t+u} \ge \gamma(t+u,y)\}}\,\ud u\Big]\\
&= \E_{t,x}\Big[\int_0^{\zeta_{b,n}}\!\!\!\!\e^{-ru}\big(rc_0-\partial_yf(X_{t+u},y)\big)\mathsf{1}_{\{X_{t+u} \ge \gamma(t+u,y)\}}\,\ud u\Big],
\end{aligned}
\end{equation*}
where the final equality holds because $\big(U^{\gamma}-u_n\big)(t+\zeta_{b,n},X_{t+\zeta_{b,n}},y)=0$ by continuity of the trajectories and of the functions $U^\gamma$, $u_n$ and $b_n(\cdot,y)$. However, $rc_0-\partial_y f(x,y)\le 0$ when $x\ge \gamma(t,y)$, because $\gamma\in B_y$, hence we reach a contradiction. This concludes the proof.
\end{proof}
\begin{remark}
If the process $X^{[n]}$ with dynamics given by \eqref{eq:dynamicsn} admits a transition density denoted $p_n(t, x, t+s, z)\doteq \partial_{z}\P(X_{t+s}^{[n]; t,x} \leq z)$, the integral equation takes the more explicit form 
\begin{equation}
\label{eq:integral_eq}
\begin{cases}
c_0(1-\e^{-r (T-t)}) = \int_0^{T-t}\left(\int_{-\infty}^{\infty} \e^{-r  s} \partial_y f(z,y) p_n(t, b_n(t,y), t+s, z)\,\,\ud z\right)\,\ud s \\
\qquad \qquad \qquad \qquad + \int_{0}^{T-t}\left(\int_{b_n(t+s,y)}^{\infty} \e^{-r s}\left(r c_0 - \partial_y f(z,y)\right)p_n(t, b_n(t,y), t+s, z)\,\ud z\right) \,\ud s,\\
b_n(T-,y) = \bar{x}(y),
\end{cases}
\end{equation}
for all $(t,y)\in [0,T)\times[0,1]$. Analogously we can argue for $X^*$ with transition density $p(t,x,t+s,z)$. 

This allows numerical solution of the equation via Picard iterations.
\end{remark}

\section{Numerical Experiments}\label{sec:numericalexperiments}

In this section, we numerically solve, in a specific example, the \textrm{MFG} of finite-fuel capacity expansion. First we adopt a particular form for the functions $a(x,m)$, $\sigma(x)$ and $f(x,y)$ and simplify the expressions in \eqref{eq:integral_eq}. Then, we further specialize and make an explicit choice of $f$ to present some numerical results. 

We consider the case in which $f(x, y):=\e^{x}\,g(y)$ with $g \in C^2((0,1])\cap C([0,1])$, $g\ge 0$, strictly concave and strictly increasing. For a generic function $m(t)$ the dynamics of $X$ in \eqref{eq:XYdynamicsnew} reads \begin{equation*}
X_{t+s}^{t,x}  = x + \int_{0}^{s} m(t+u)\,\ud u + \sigma (W_{t+s}-W_{t}).
\end{equation*}
When $m=m^*$ we denote the optimal dynamics by $X^*$ and for $m=m^{[n-1]}$ we denote the dynamics by $X^{[n]}$.

For arbitrary $m(t)$ the law of the process is Gaussian with $X_{t+s}^{t,x} \overset{d}{\sim} \mathcal{N}(x+\int_{0}^{s}m(t+u)\,\ud u, \sigma^2 s)$, where $\mathcal{N}(\alpha,\gamma^2)$ denotes a Gaussian random variable with mean $\alpha$ and variance $\gamma^2$. From now on we work with the optimal dynamics $X^*$ and consider \eqref{eq:integral_eq} with $b(t,y)$ instead of $b_n(t,y)$ and with $p(t,x,t+s,z)$ instead of $p_n(t,x,t+s,z)$. The first step is to rewrite \eqref{eq:integral_eq} in a more explicit form using the properties of the Gaussian density. In particular, we have
\[
p(t,x,t+s,z)=\frac{1}{\sqrt{2\pi\sigma^2 s}}\exp\Big(-\frac{\big(z-x-\int_0^s m^*(t+u)\ud u\big)^2}{2\sigma^2 s}\Big).
\]
Letting $\Phi(z)$ be the cumulative standard normal, we have 
\begin{equation*}
\begin{aligned}
&\,r\,c_0\,\int_{b(t+s,y)}^{\infty} \e^{-r s}p(t, b(t,y), t+s, z) \,\ud z = r\,c_0\,\e^{-r s} \left[1 - \Phi(\beta(s))\right],
\end{aligned}
\end{equation*}
where 
\begin{equation}\label{eq::betas}
    \beta(s):=\frac{b(t+s,y)-b(t,y)-\int_{0}^{s}m^{*}(t+u)\,du}{\sigma\sqrt{s}}.
\end{equation}
The two integrals on the right-hand side of \eqref{eq:integral_eq} involving $\partial_y f(x,y)=\e^x g'(y)$ can be simplified by grouping them together and rewriting
\begin{equation*}
\begin{aligned}
&\int_{-\infty}^\infty\e^{-rs}\partial_y f(z,y)p\big(t,b(t,y),t+s,z\big)\ud z-\int_{b(t+s,y)}^\infty\e^{-rs}\partial_y f(z,y)p\big(t,b(t,y),t+s,z\big)\ud z\\
&=\int^{b(t+s,y)}_{-\infty}\e^{-rs}\partial_y f(z,y)p\big(t,b(t,y),t+s,z\big)\ud z\\
&=\e^{-rs}g'(y)\int^{b(t+s,y)}_{-\infty}\e^{z}p\big(t,b(t,y),t+s,z\big)\ud z.
\end{aligned}
\end{equation*}
For the remaining integral, standard algebraic steps lead to
\begin{equation*}
\begin{aligned}
&\int^{b(t+s,y)}_{-\infty} \e^z p(t, b(t,y), t+s, z) \,\ud z \\
&=\int^{b(t+s,y)}_{-\infty} \e^{z}\frac{1}{\sqrt{2 \pi \sigma^2 s}} \e^{-\frac{1}{2 \sigma^2 s}\big(z-(b(t,y)+\int_{0}^{s}m^{*}(t+u)\,\ud u)\big)^2}\,\ud z\\
&=\e^{b(t,y)} \exp\left(\frac{\sigma^2 s}{2} + \int_{0}^{s}m^{*}(t+u)\,\ud u\right)\int^{b(t+s,y)}_{-\infty}\frac{1}{\sqrt{2 \pi \sigma^2 s}}\e^{-\frac{1}{2 \sigma^2 s}\big(z-(b(t,y)+ \sigma^2 s + \int_{0}^{s}m^{*}(t+u)\,\ud u)\big)^2}\, \ud z\\
&=\e^{b(t,y)} \exp\left(\int_{0}^{s}m^{*}(t+u)\, \ud u + \frac{\sigma^2 s}{2} - r s\right)\Phi(\beta(s)-\sigma\sqrt{s}), 
\end{aligned}
\end{equation*}
with $\beta(s)$ defined as in \eqref{eq::betas}. 
Therefore, the integral equation in \eqref{eq:integral_eq} can be written as
\begin{equation}
\begin{aligned}
c_0(1-\e^{-r(T-t)}) &= r c_0 \int_{0}^{T-t} \e^{-r s} \left[1 - \Phi(\beta(s))\right] \,\ud s \\
&\quad+ g'(y) \e^{b(t,y)} \int_{0}^{T-t}\exp\left(\int_{0}^{s}m^{*}(t+u) \ud u + \frac{\sigma^2 s}{2} - r s\right)\Phi(\beta(s)-\sigma\sqrt{s})\ud s\\
& \eqqcolon  rc_0 I^{(1)}(t,b(\cdot,y);T,r,\sigma)+g'(y)\e^{b(t,y)}I^{(2)}(t,b(\cdot,y);T,r,\sigma).
\end{aligned}
\end{equation}
Rearranging terms and taking logarithms we obtain
\begin{equation}\label{eq:inteqb}
\begin{aligned}
b(t,y) =\log{c_0} &+\log{\left((1-\e^{-r(T-t)})- rI^{(1)}(t,b(\cdot,y);T,r,\sigma) \right)}\\
		 &-\log{g'(y)}-\log{I^{(2)}(t,b(\cdot,y);T,r,\sigma)},
\end{aligned}
\end{equation}
with $(1-\e^{-r(T-t)})- rI^{(1)}(t,b(\cdot,y);T,r,\sigma) >  0$. The same equation holds for the boundary $b_n(t,y)$ in the form
\begin{equation}\label{eq:inteqbn}
\begin{aligned}
b_n(t,y) =\log{c_0} &+\log{\left((1-\e^{-r(T-t)})- rI^{(1)}_n(t,b_n(\cdot,y);T,r,\sigma) \right)}\\
		 &-\log{g'(y)}-\log{I^{(2)}_n(t,b_n(\cdot,y);T,r,\sigma)},
\end{aligned}
\end{equation}
where the functions $I^{(1)}_n(\cdot)$ and $I^{(2)}_n(\cdot)$ are defined analogously to $I^{(1)}$ and $I^{(2)}$ but with $m^*$ replaced by $m^{[n-1]}(t)$ and $\beta$ replaced by $\beta_n(s)=(b_n(t+s,y)-b_n(t,y)-\int_0^sm^{[n-1]}(t+u)\ud u)/(\sigma \sqrt{s})$, $s>0$, and $\beta_n(0):=0$. Letting $n\to\infty$ it is easy to verify that \eqref{eq:inteqbn} converges to \eqref{eq:inteqb} thanks to Corollary \ref{cor:unifconv}. 

The difficulty with a direct numerical resolution of \eqref{eq:inteqb} lies in the presence of the mean-field interaction term $m^*(t)$, which depends in a complex way on the unknown boundary $b$ via the trajectory of optimally controlled dynamics. Instead, a numerical solution of \eqref{eq:inteqbn} for $b_n$ is an easier task, because the function $m^{[n-1]}(t)$ depends only on $b_{n-1}$. Then, our strategy is to solve \eqref{eq:inteqbn} numerically  for each $n=1,2,\ldots $ using a Picard iteration scheme, and then choose large $n$ to approximate the optimal boundary $b$. More precisely, we use two nested iterations. The index $n$ denotes what we call a \emph{game iteration} (outer learning loop) producing $(b_n, m^{[n]})$ from $m^{[n-1]}$. For each fixed $n$, we solve the integral equation for $b_n$ by a Picard iteration scheme indexed by $k$ (i.e., for each Picard iteration we obtain $b^{(k)}_n$). The theoretical description of the algorithm is provided in the next sections and the pseudo-code is given in \textbf{Algorithm} \ref{alg:sec5}. 

\subsection{Picard scheme for the integral equation}

We truncate the state space and consider a bounded domain $[0,T]\times\Sigma_p$ where 
$\Sigma_p=[x_m,x_M] \times [y_0,1]$ for suitably chosen $x_m<x_M$ and some $y_0>0$; we take $y_0>0$ because $g'(y)$ may diverge as $y \downarrow 0$. Then, we discretise time and space according to equally spaced partitions: given $\ell_1,\ell_2,\ell_3\in\N$ we let
$\Pi_{t}=\{0=t_0<t_1<\ldots<t_{\ell_1}=T\}$, $\Pi_{y}=\{0<y_0<y_1<\ldots<y_{\ell_2}=1\}$ and $\Pi_{x}=\{x_m=x_0<x_1<\ldots<x_{\ell_3}=x_M\}$ with mesh $\Delta_t=T/\ell_1$, $\Delta_y=(1-y_0)/\ell_2$ and $\Delta_x=(x_M-x_m)/\ell_3$. 

At the $0$-th step of the iterative scheme (i.e. for $b_0(t,y)$) we have  $m^{[-1]}\equiv 1$ and the quantities $I^{(1)}_0$ and $I^{(2)}_0$ in \eqref{eq:inteqbn} above can be considerably simplified.  In order to numerically solve \eqref{eq:inteqbn} for $b_0(t,y)$, we initialize the algorithm by setting $b^{(0)}_{0}(t_i, y_j)=\bar x(y_j)=\log(r c_0)-\log(g'(y_j))$, for all $i=0,1,\ldots, \ell_1$, $j = 0,1, \ldots, \ell_2$. Notice that the choice is motivated by the terminal condition $b_n(T-,y)=\bar x(y)$ in \eqref{eq:inteqbn}. Other choices are of course possible and they will in general affect the overall speed of convergence of the Picard scheme at the subsequent steps.

We denote by $b^{(k)}_{0}(t_i,y_j)$ the value of the boundary obtained from the $k$-th iteration of the Picard scheme. Then $b^{(k+1)}_{0}(t_i,y_j)$ is computed as 
\begin{equation}\label{eq:numerics0}
\begin{split}
b_{0}^{(k+1)}(t_i,y_j) &=\log{c_0} +\log{\left((1-\e^{-r(T-t_i)})- rI^{(1)}_0(t_i,b_{0}^{(k)}(\cdot,y_j);T,r,\sigma) \right)}\\
		 &\quad-\log{g'(y_j)}-\log{I^{(2)}_0(t_i,b_{0}^{(k)}(\cdot,y_j)};T,r,\sigma),
\end{split}
\end{equation}
for $i=0,1,\ldots, \ell_1$ and $j=0,1,\ldots,\ell_2$. All the integrals with respect to the time variable are computed by a standard quadrature method using the same grid $\Pi_t$ (so we only require values of the boundary $b_0^{(k)}(t_i+q\Delta t, y_j)$, with $q=0, \ldots \ell_1-i$).
The iteration terminates when we reach a sufficiently large $k_0$ so that
\begin{equation}\label{eq:tolerance}
\|b^{(k_0)}_0-b^{(k_0-1)}_0\|_2:=\Big(\sum_{i,j}\big|b^{(k_0)}_0(t_i,y_j)-b^{(k_0-1)}_0(t_i,y_j)\big|^2\Big)^\frac12<\eta,
\end{equation}
where $\eta>0$ is a small number that represents our tolerance on the approximation. Then, we take $b_0^{k_0}$ as our proxy for $b_0$ and we denote it by $b_0$ hereafter. 

In order to proceed to the numerical approximation for $b_1$ we first need to calculate  $\xi_t^{[0]*}=\sup_{0 \leq s \leq t}(c_0(s, X^{[0];x}_{s})-y)^{+}$, where $c_0(t,x)$ is the generalized, right-continuous inverse of $b_0(t,\cdot)$ (with respect to the $y$ variable, i.e., $c_0(t,x)=\inf\{y:b_0(t,y)>x\}$). The latter is calculated by numerical inversion using the Python module \texttt{pynverse}, which is specialized in computing the numerical inverse of any invertible continuous function. 
Then $Y_{t}^{[0]*}=y+\xi^{[0]*}$ and $m^{[0]}(t)$ is obtained as
\[
m^{[0]}(t)=\int_\Sigma\E_{x,y}\big[Y^{[0]*}_t\big]\nu(\ud x,\ud y)\approx  \int_{\Sigma_p}\E_{x,y}\big[Y^{[0]*}_t\big]\nu(\ud x,\ud y),
\] 
by Monte Carlo simulation. It is worth noticing that when computing the generalized inverse $c_0(t,x)$, the latter is set equal to $1$ if $x > \max_{y \in \Pi_y} b_{0}(t,y)$ and to $0$ if $x < \min_{y \in \Pi_y} b_0(t,y)$. In the numerical implementation $\nu$ is taken as the uniform discrete distribution on $\Pi_x\times\Pi_y$, because the product structure of such law helps reducing the Monte Carlo variance.

The procedure described above yields $m^{[0]}$, which we can now use to define the functions $I^{(1)}_1$ and $I^{(2)}_1$ that appear in the integral equation for $b_1$. At this point we can solve the analogue of \eqref{eq:numerics0} but with $b^{(k)}_0$, $b^{(k+1)}_0$, $I^{(1)}_0$ and $I^{(2)}_0$ therein replaced by $b^{(k)}_1$, $b^{(k+1)}_1$, $I^{(1)}_1$ and $I^{(2)}_1$. The Picard iteration terminates when $k_1$ is large enough to guarantee $\|b_1^{(k_1)}-b_1^{(k_1-1)}\|_2<\eta$ as in \eqref{eq:tolerance}. Iterating the whole procedure we obtain a sequence $(b^{(k_n)}_n)_{n\in\N}$ which we truncate when $n$ is large enough to guarantee $\|b^{(k_n)}_n-b^{(k_{n-1})}_{n-1}\|_2<\eta$.

We propose two diagnostics for the final equilibrium candidate. The numerically optimal surface is denoted by $b^{\star}$ and the corresponding driver mean is denoted by $m^{\dag}$. Then we consider the residual 
\begin{equation}\label{eq::residual}
    \mathcal{R}(t_i, y_j):=|\Phi(b^{\star};m^{\dag})(t_i,y_j)-b^{\star}(t_i, y_j)|, 
\end{equation}
for $i=0,1,\ldots,\ell_1$ and $j=0,1,\ldots,\ell_2$, where $\Phi$ denotes the right-hand side of \eqref{eq:inteqbn} evaluated using $b^\star$ and $m^\dag$.
We introduce two norms: 
\begin{equation}
\|\mathcal{R}\|_{\infty}:=\max_{i,j}\mathcal{R}(t_i, y_j),\quad\quad \|\mathcal{R}\|_{2}:=\left(\sum_{i,j}\mathcal{R}(t_i, y_j)^2\right)^{1/2}. 
\end{equation}
Along simulated optimal paths we compute 
\begin{equation}\label{eq::diagnostic_two}
    G_{t_i}:=Y_{t_i}^{\star}-c^\star({t_i},X_{t_i}^{\star}),\quad i=0,1,\ldots \ell_1,
\end{equation}
with $c^\star(t,\cdot)$ the generalized inverse of $b^\star(t,\cdot)$. Increments of the control on the discrete time-grid are denoted $\Delta \xi_{t_i}^{\star}$. The Skorokhod conditions amounts to $G_{t_i}\ge 0$ for $i=0,1,\ldots \ell_1$ and
\begin{equation}\label{eq::diagnostic_two_one}
    \Delta \xi_{t_i}^{\star}>0 \Rightarrow Y_{t_i}^{\star}=c(t_i,X_{t_i}^{\star}).
\end{equation}
Numerically, this is verified by checking that the maximum value of $|G_t|$ on the active set $\{\Delta \xi_t^{\star} > 0\}$ is at the machine precision level.

\subsection{A concrete example} In our numerical exercise, we choose $g(y)=\sqrt{y}$ and fix $c_0=0.5$, $r=0.01$, $T=1$, and $\sigma=1$. To discretise time and space, we set $\ell_1=75$, $\ell_2=50$, $x_m=-5$ and $x_M=0.5$ with $\ell_3=25$. The values for $x_m$ and $x_M$ have been chosen so as to guarantee that the simulated paths of the initial process $X^{[0]}$ stay inside the interval $[x_m,x_M]$ with high probability.

In order to avoid issues with the derivative of $g(y)$ at the origin we fix $y_0=10^{-3}$, and $\eta=10^{-3}$. The maximum number of Picard iterations has been fixed a priori to $5$ and we take $b_0^{(5)}(t,y)$ as optimal; we verify a posteriori that for all the considered $n$ it indeed holds that $\|b_{n}^{(5)}-b_{n}^{(4)}\|_2 < 10^{-3}$. We now present our results. 

Figure \ref{fig:boundary_0} (\textit{left panel}) shows the boundary $b^{(0)}_{0}(t,y)$ (in \textit{red}) and the boundary $b_0^{(5)}(t,y)$ (in \textit{blue}), whereas Figure \ref{fig:boundary_0} (\textit{right panel}) shows $c_0^{(5)}(t,x)$, obtained by numerical inversion of $b_0^{(5)}(t,y)$. Figure \ref{fig:boundary_5} (\textit{left panel}) shows the boundary $b_5^{(5)}(t,y)$ and Figure \ref{fig:boundary_5} (\textit{right panel}) shows $c_5^{(5)}(t,x)$, obtained by numerical inversion of $b_5^{(5)}(t,y)$. Although we stop at game iteration $n=5$ and, for each game iteration, we stop the Picard scheme at iteration $k=5$, we observe convergence of the optimal stopping surfaces already after the first very few iterations in both cases. 

Figure \ref{fig:convergence_0} displays the numerical convergence of the Picard scheme for the boundaries $b_0^{(k)}(t,y)$, $k\in \{1,\ldots,5\}$, where the numerical error at step $k$ is defined by the 2-norm $\lVert b_0^{(k)} - b_0^{(k-1)}\rVert_2$. Similarly, Figure \ref{fig:convergence_5} displays the numerical convergence of the Picard scheme for the boundaries $b_5^{(k)}(t,y)$,  $k\in \{1,\ldots,5\}$. Both plots show a rapid decrease of the numerical error within the first $3$ iterations. 
Figure \ref{fig:convergence_6} shows for each $n\in\{1,\ldots 5\}$ the convergence of $\|b^{(5)}_n-b^{(k)}_n\|_2$ for $k\in\{1,\ldots 5\}$.

Figure \ref{fig:game_error} displays the distance between the optimal boundaries of subsequent iterations for the construction of the solution to our MFG. That is, we plot $\lVert b_n^{(5)} - b_{n-1}^{(5)}\rVert_2$ for $n\in\{1,\ldots,5\}$. That plot should be considered along with Figure \ref{fig:m_convergence_t}, where we plot $t\mapsto m^{[n]}(t)$ for $n\in \{0,\ldots,5\}$.

For a representative initial condition $(X_0,Y_{0-})=(-5,0.2)$ and a single controlled trajectory on a refined time grid with $500$ time steps, Figure \ref{fig:optimal_control} shows the following quantities: (1) the optimal sample path $X_t^{\star}$; (2) the optimal capacity $Y_t^{\star}$ together with the moving base-capacity $c^\star(t,X_t^{\star})$; (3) the cumulative control $\xi_t^{\star}$ and the distance between the optimal capacity and the moving base-capacity $Y_t^{\star}-c^\star(t,X_t^{\star})$. This figure makes the Skorokhod reflection mechanism explicit. Indeed, the control acts only when the target attempts to exceed the current fuel level, and it acts by the minimal amount required to restore the constraint.

Finally, Figure \ref{fig:diagnostic} displays the results of the diagnostic tests introduced above. The \textit{left panel} shows $ \mathcal R(t_i,y_j)=|\Phi\bigl(b_5^{(5)};m^{[4]}\bigr)(t_i,y_j)-b_5^{(5)}(t_i,y_j)|$ on the $(t,y)$ grid; see \eqref{eq::residual}. The \textit{right panel} shows the empirical values of $|G_t|$ in \eqref{eq::diagnostic_two} at the time points where the control increment is positive, together with a tolerance line (this test uses an additional Monte Carlo batch of $96$ reflected paths on a refined grid with $700$ time steps). 
In our numerical simulations, we also obtain extremely good results from tests performed according to the metrics introduced above. In particular, we observe
$\|\mathcal{R}\|_{\infty}=2.69 \cdot 10^{-4}$, $\|\mathcal{R}\|_{2}=8.82 \cdot 10^{-5}$, $\max_{t:\Delta \xi_t>0}|G_t| = 10^{-9}$, and $\min_t G_t=10^{-12}$. 
That contributes to strong numerical evidence that the computed reflected control is indeed the optimal one associated with the final free boundary.

\begin{figure}[t!]
    \centering
    \includegraphics[height=3.2in]{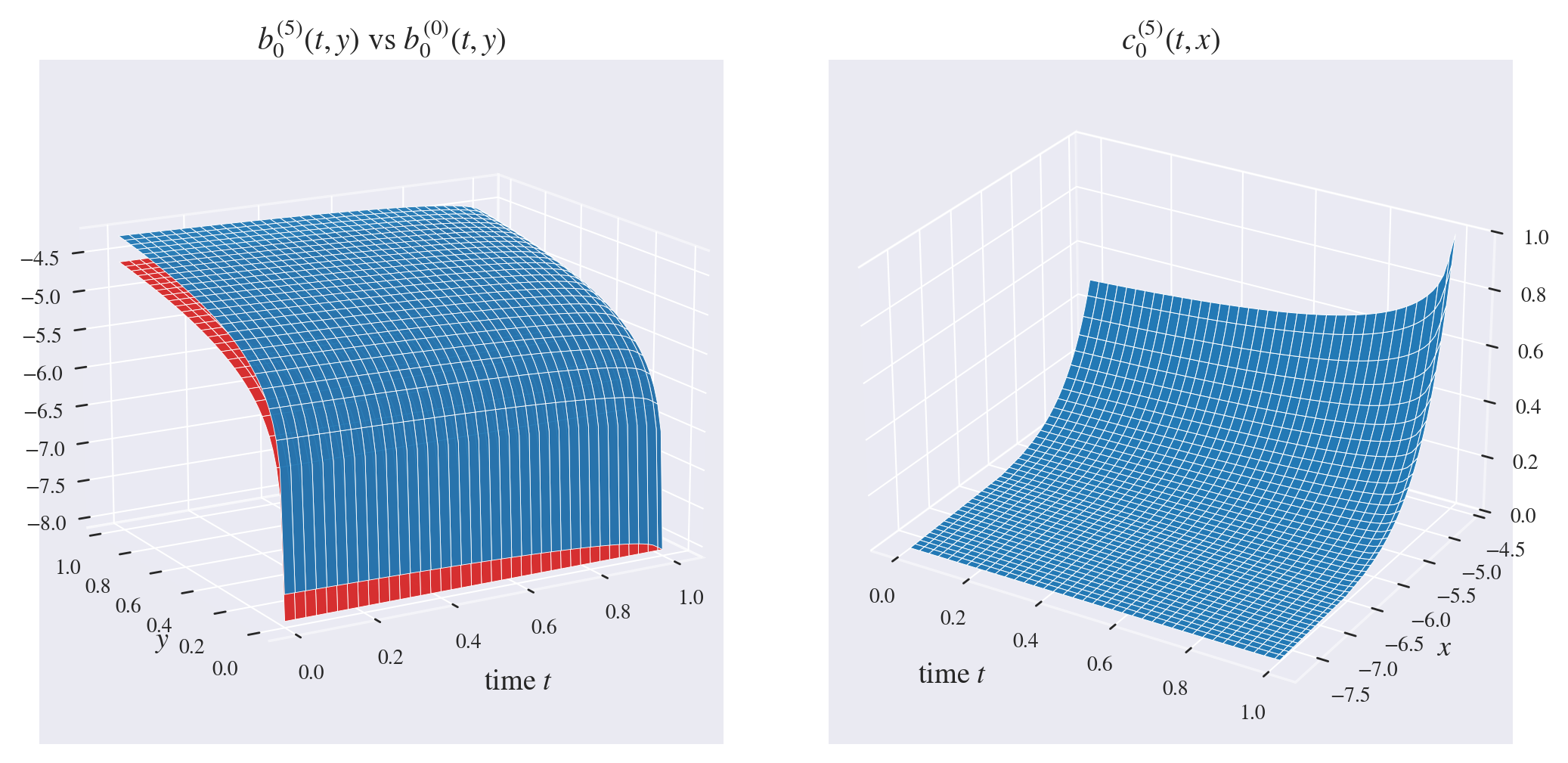}
\caption{\textit{Left panel:} The figure displays the boundary $b_0^{(0)}(t,y)$ (in \textit{red}) and the boundary $b_0^{(5)}(t,y)$ (in \textit{blue}). \textit{Right panel:} Optimal surface at game iteration $n=0$.}
\label{fig:boundary_0}
\end{figure}

\begin{figure}[t!]
    \centering
    \includegraphics[height=3.2in]{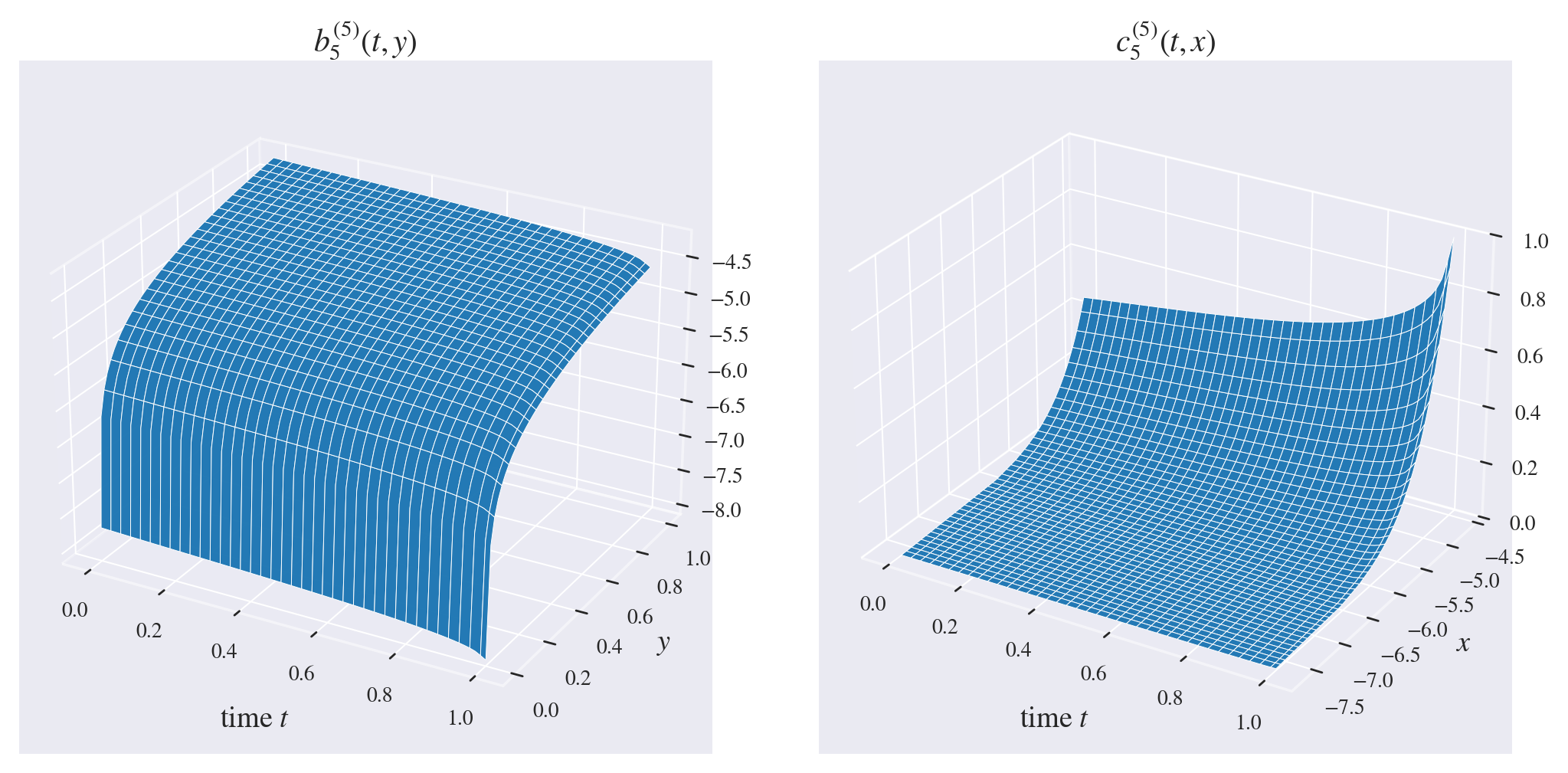}
\caption{\textit{Left panel:} The figure displays the boundary $b_5^{(5)}(t,y)$. \textit{Right panel:} The figure displays the generalized inverse $c_5^{(5)}(t,x)$.}
\label{fig:boundary_5}
\end{figure}

\begin{center}
\begin{figure}[t!]
\centering
\includegraphics[height=3.2in]{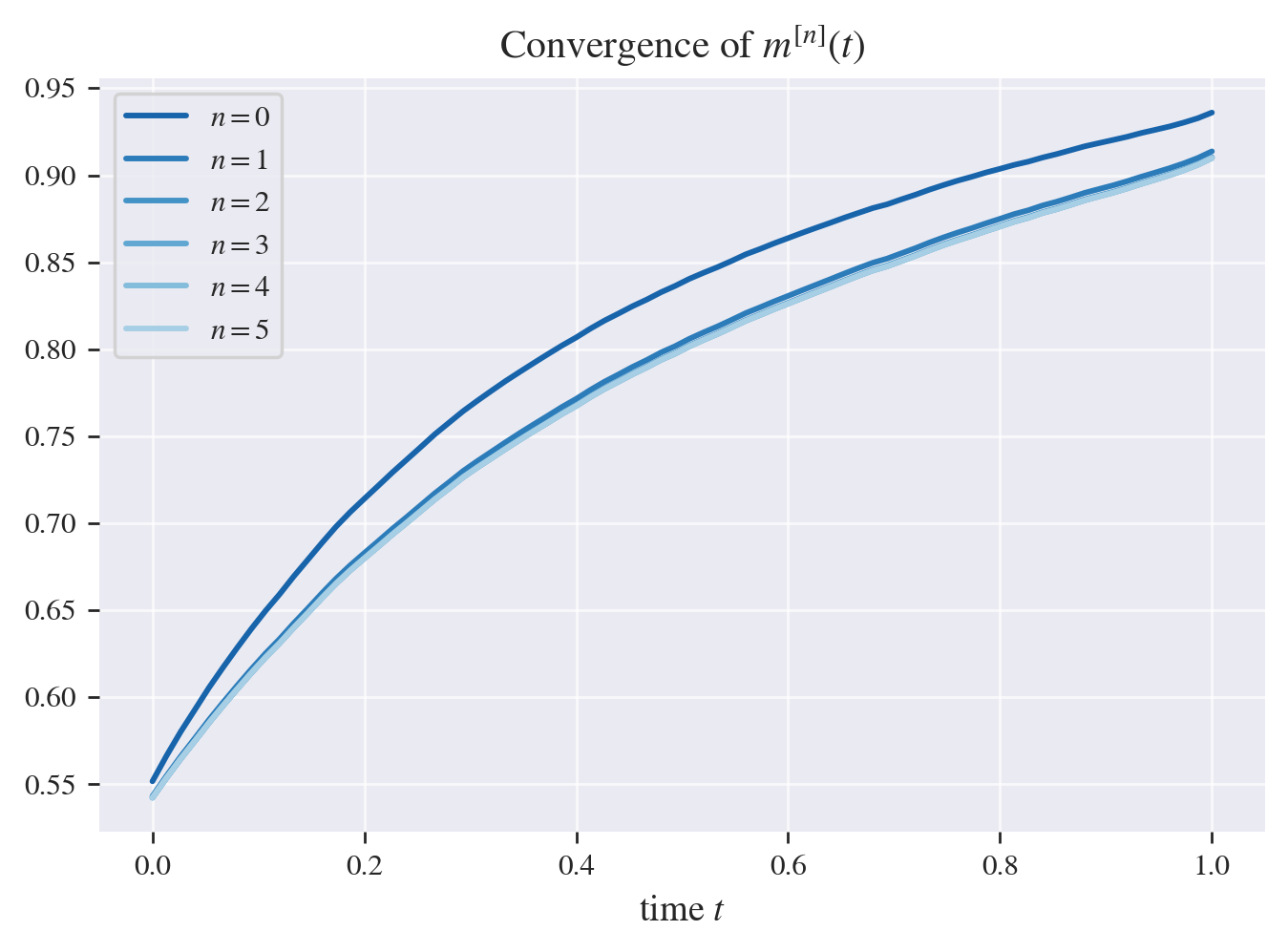}
\caption{The figure displays $m^{[n]}(t)$ for game iteration $n\in \{0,\ldots,5\}$.}
\label{fig:m_convergence_t}
\end{figure}
\end{center}

\begin{center}
\begin{figure}[t!]
\centering
\includegraphics[height=3.2in]{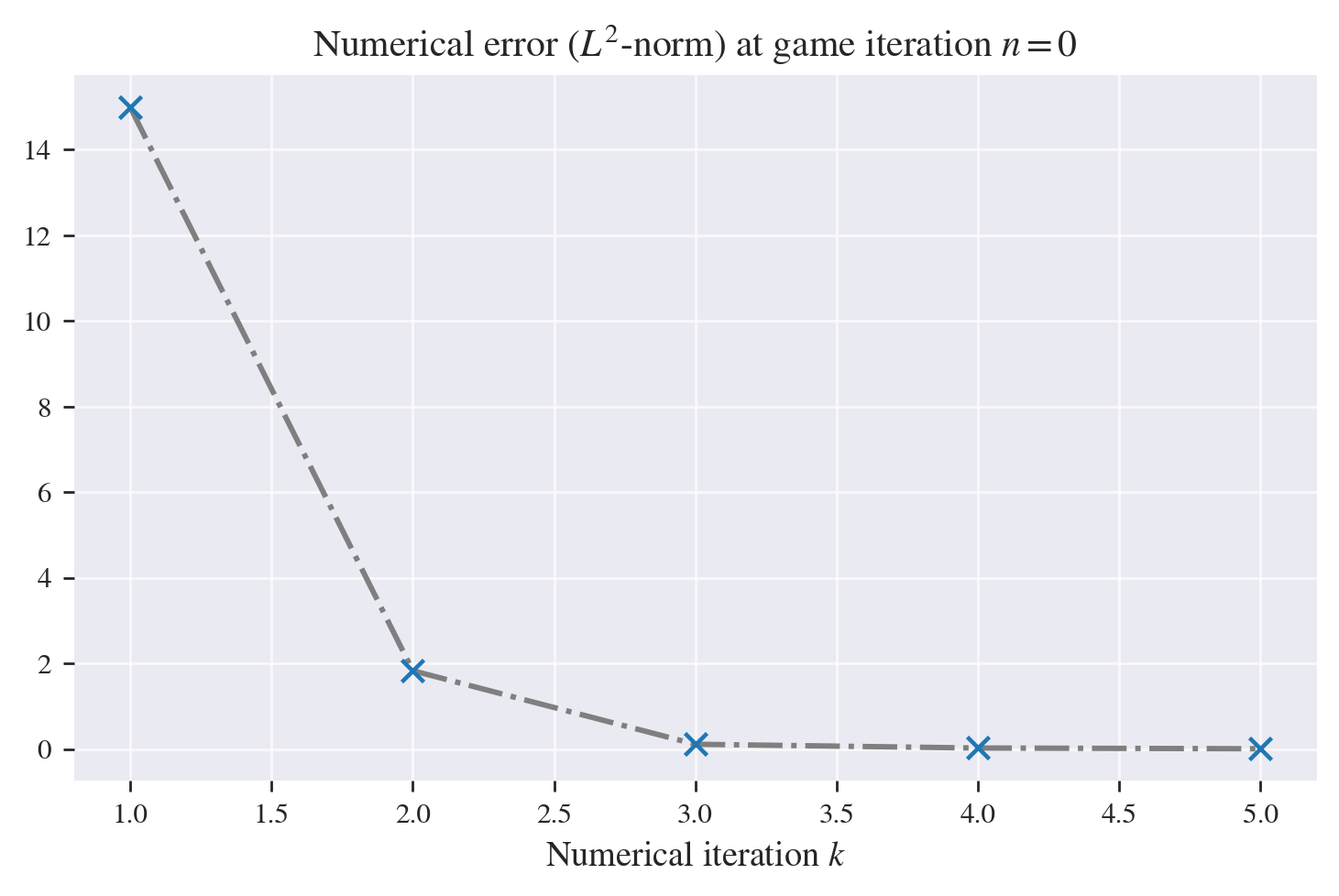}
\caption{The figure displays the numerical convergence of the boundaries $b_0^{(k)}(t,y)$ for $k\in \{1,\ldots,5\}$, where the numerical error at step $k$ is defined as the $2$-norm $\lVert b_0^{(k)} - b_0^{(k-1)}\rVert_2$.}
\label{fig:convergence_0}
\end{figure}
\end{center}

\begin{center}
\begin{figure}[t!]
\centering
\includegraphics[height=3.2in]{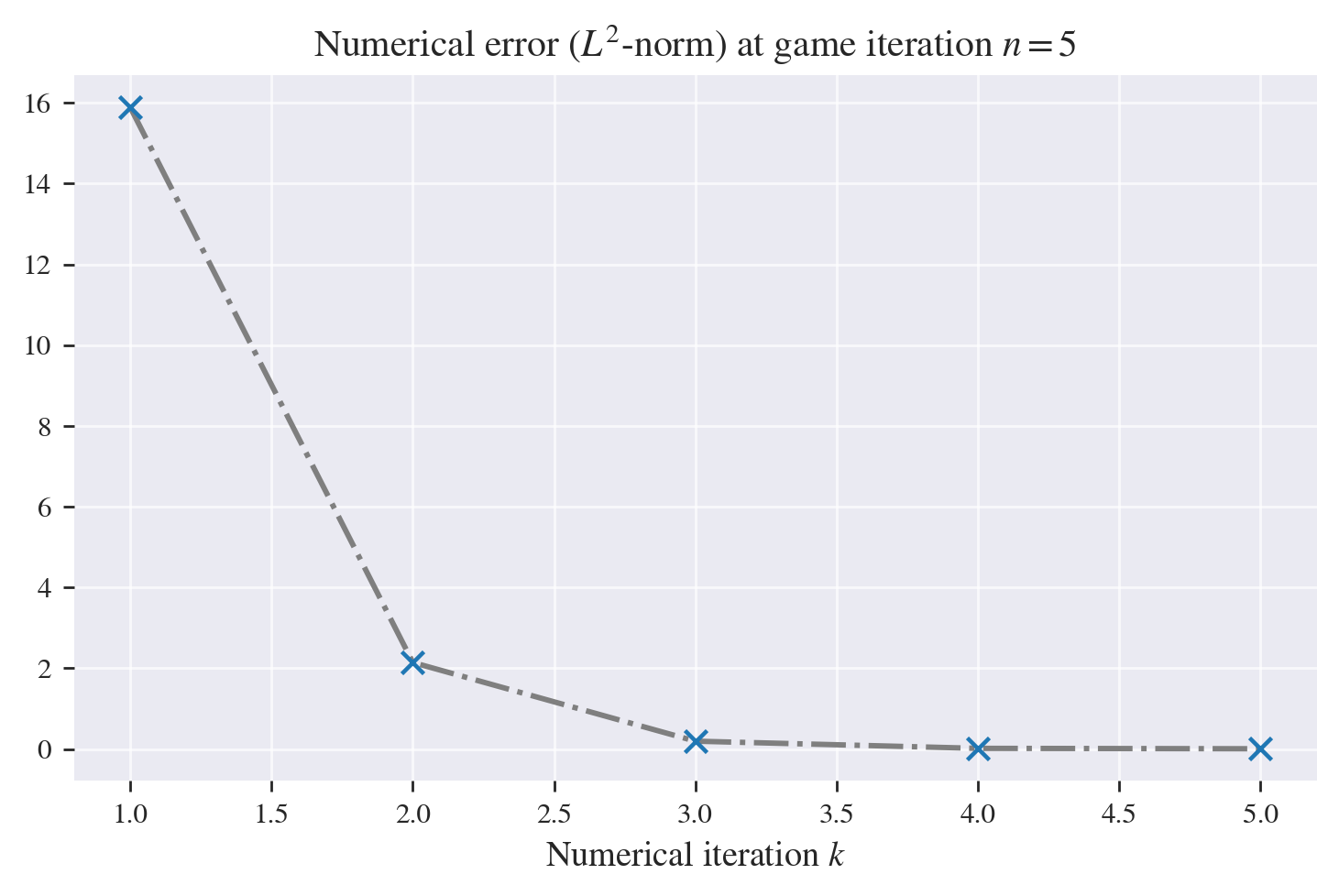}
\caption{The figure displays the numerical convergence of the boundaries $b_5^{(k)}(t,y)$ for $k\in \{1,\ldots,5\}$, where the numerical error at step $k$ is defined as the 2-norm $\lVert b_5^{(k)}- b_5^{(k-1)}\rVert_2$.}
\label{fig:convergence_5}
\end{figure}
\end{center}

\begin{center}
\begin{figure}[t!]
\centering
\includegraphics[height=3.2in]{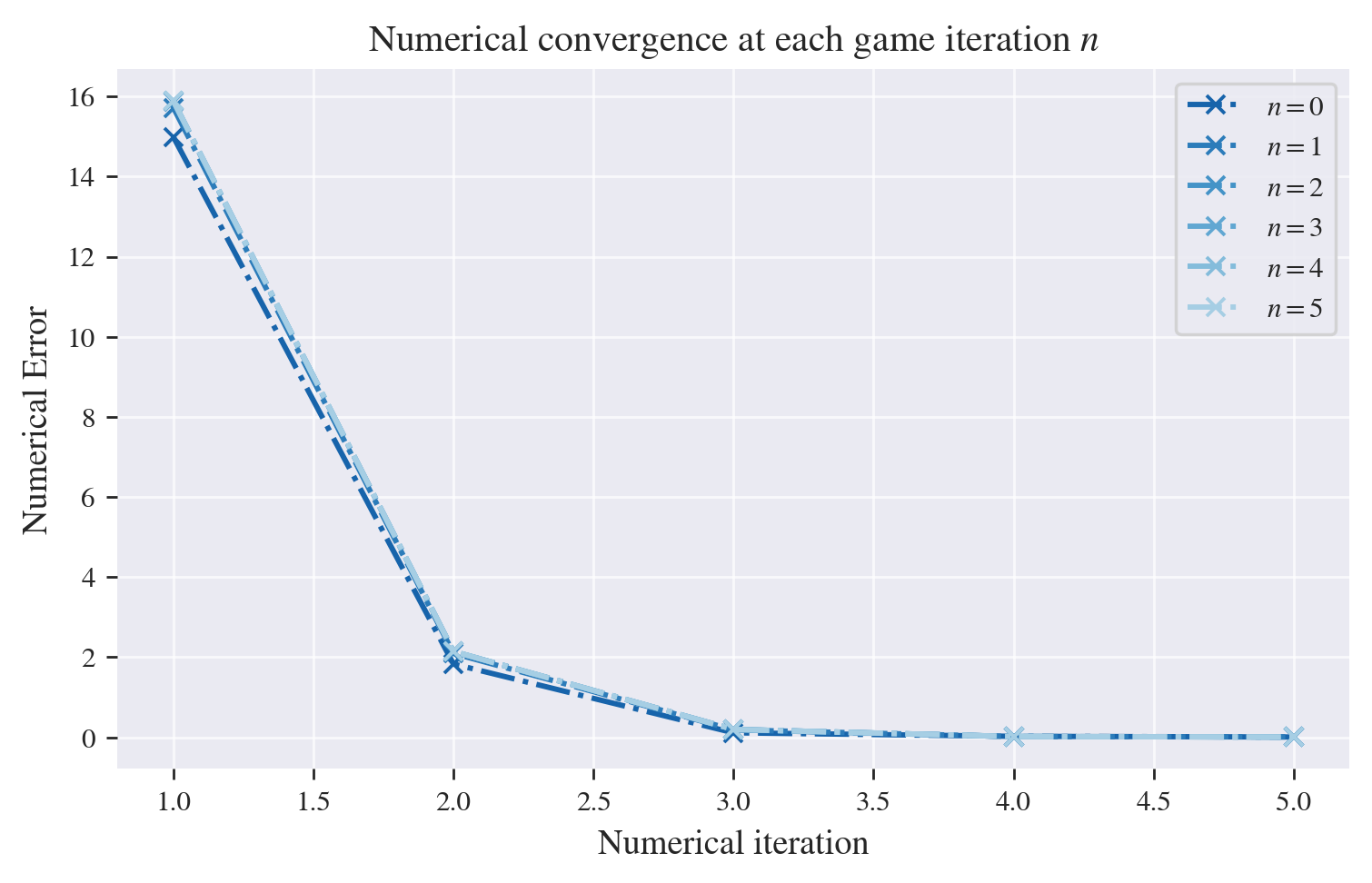}
\caption{The figure displays, for each $n\in\{1,\ldots 5\}$, the convergence of $\|b^{(5)}_n-b^{(k)}_n\|_2$ for $k\in\{1,\ldots 5\}$.}
\label{fig:convergence_6}
\end{figure}
\end{center}

\begin{center}
\begin{figure}[t!]
\centering
\includegraphics[height=3.2in]{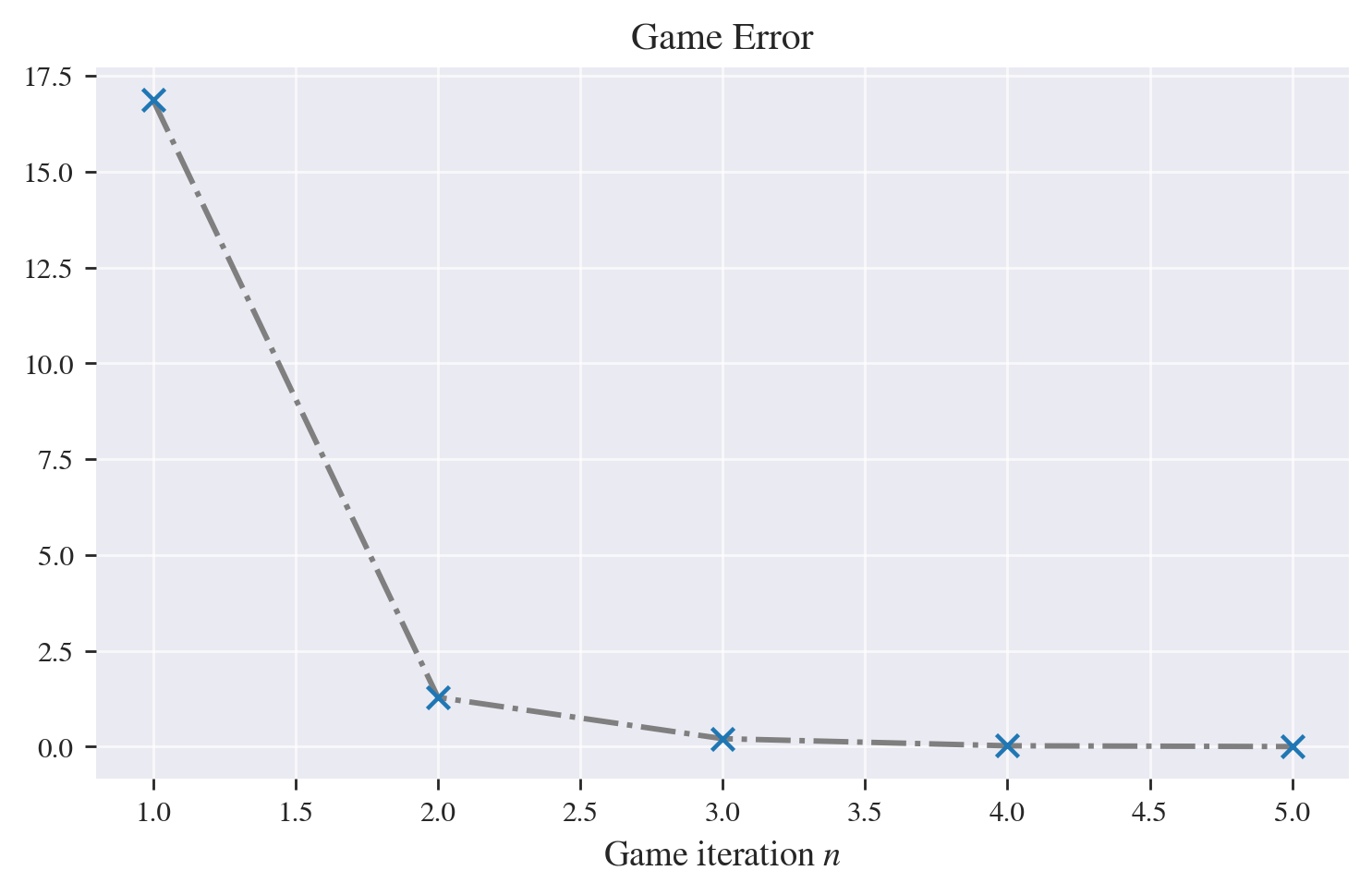}
\caption{The figure displays the distance in $2$-norm between the optimal boundaries of subsequent game iterations, i.e. $\lVert b_n^{(5)} - b_{n-1}^{(5)}\rVert_2$, $n \in \{1,\ldots,5\}$}
\label{fig:game_error}
\end{figure}
\end{center}

\begin{center}
\begin{figure}[t!]
\centering
\includegraphics[scale=0.60]{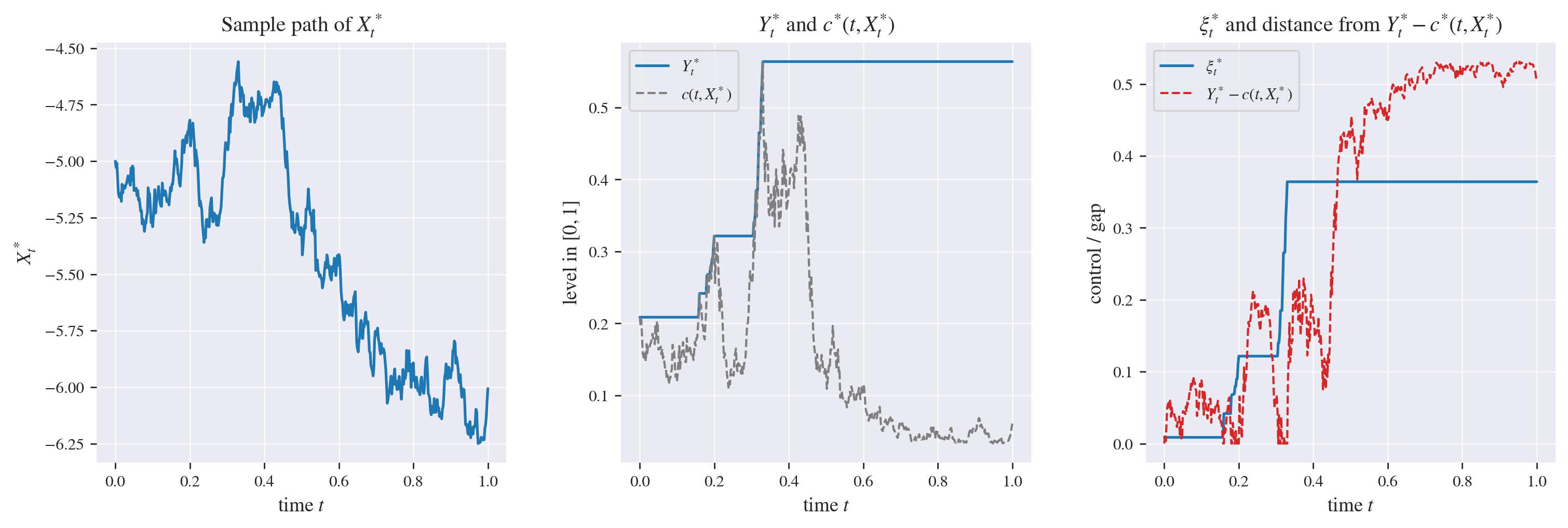}
\caption{\textit{Left panel:} The sample path $X_t^{\star}$. \textit{Center panel:} The reflected state $Y_t^{\star}$ together with the moving target $c^\star(t,X_t^{\star})$. \textit{Right panel:} The cumulative control $\xi_t^{\star}$ and the distance from the base capacity $Y_t^{\star}-c^\star(t,X_t^{\star})$.}
\label{fig:optimal_control}
\end{figure}
\end{center}

\begin{center}
\begin{figure}[t!]
\centering
\includegraphics[scale=0.75]{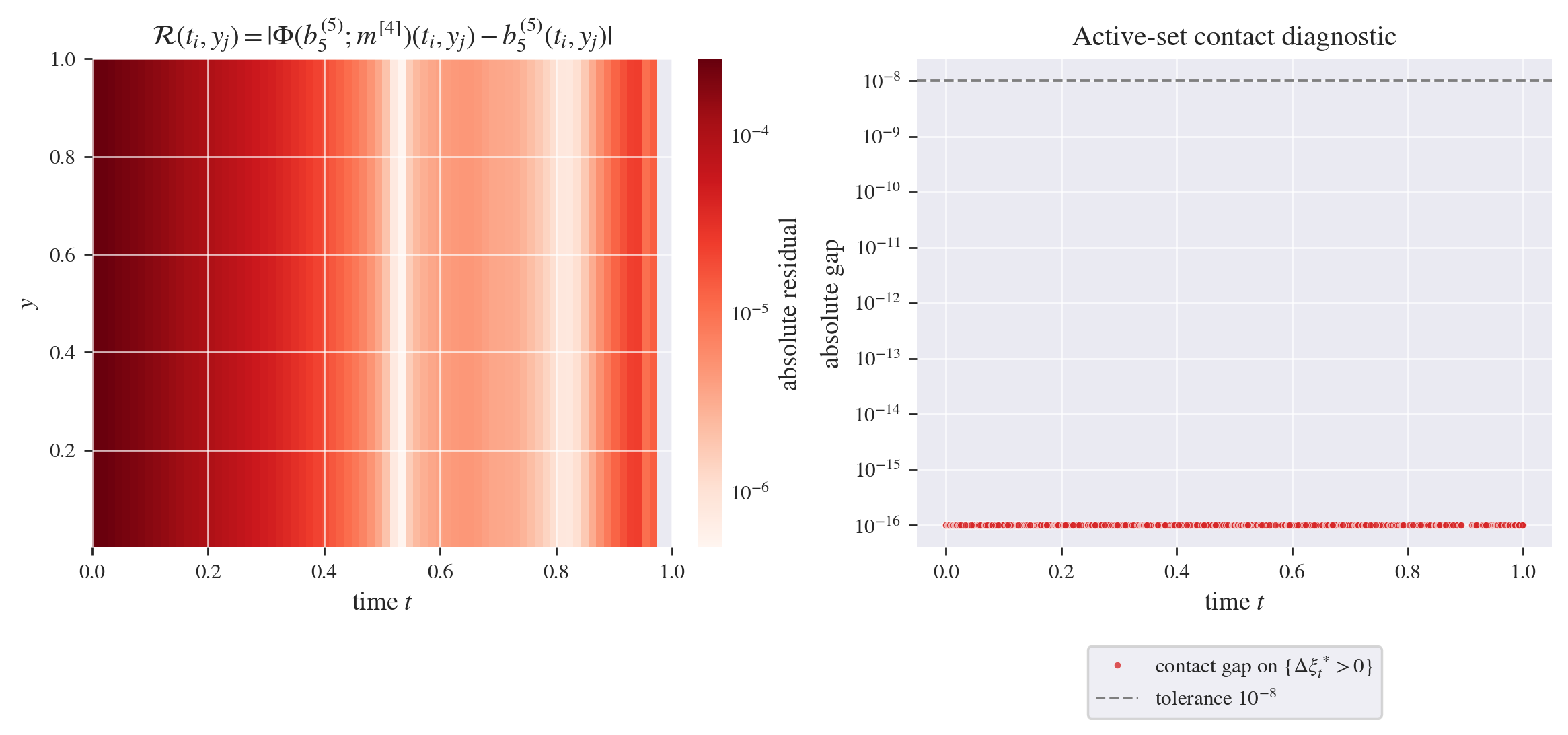}
\caption{\textit{Left panel:} The figure shows $\mathcal R(t_i,y_j)
    = |\Phi\bigl(b_5^{(5)};m^{[4]}\bigr)(t_i,y_j)-b_5^{(5)}(t_i,y_j)|$. \textit{Right panel:} Observed values of $|G_{t_i}|$ at the time points where the control increment is positive, together with a tolerance line.}
\label{fig:diagnostic}
\end{figure}
\end{center}

\clearpage

\newpage
\begin{small}
\begin{algorithm}
\caption{\small{Numerical scheme of Section 5 (game iteration and Picard solver)}}
\label{alg:sec5}
\begin{algorithmic}[1]
\REQUIRE 
Horizon $T$, parameters $(r,\sigma,c_0)$; payoff $g$ and derivative $g'$;
truncation $[x_m,x_M]$; grid sizes $(\ell_1,\ell_2,\ell_3)$;
tolerance $\eta$; max iterations $(N_{\max},K_{\max})$; Monte Carlo size $N_{\mathrm{MC}}$.
\ENSURE Approx.\ boundary $b_n(t_i,y_j)$, generalized inverse $c_n(t_i,x_k)$, mean field $m^{[n]}(t_i)$.
\STATE \textbf{Build grids on $[0,T]\times (0,1]\times[x_m,x_M]$:}
\STATE $\Delta t \gets T/\ell_1$;\quad $t_i \gets i\Delta t$ for $i=0,\dots,\ell_1$.
\STATE Choose $y_0>0$;\quad $\Delta y \gets (1-y_0)/\ell_2$;\quad $y_j \gets y_0+j\Delta y$ for $j=0,\dots,\ell_2$.
\State $\Delta x \gets (x_M-x_m)/\ell_3$;\quad $x_k \gets x_m+k\Delta x$ for $k=0,\dots,\ell_3$.
\STATE Terminal boundary (from $\partial_y f(\bar x(y),y)=rc_0$):
\STATE $\bar x(y_j)\gets \log(rc_0)-\log(g'(y_j))$.
\STATE \textbf{Initialize outer (game) loop:}
\STATE $m^{[-1]}(t_i)\gets 1$ for all $i$. ($n=0$)
\STATE $b_{-1}(t_i,y_j)\gets \bar x(y_j)$ for all $(i,j)$. ($n=0$)

\For{$n=0,1,\dots,N_{\max}$}
    \STATE \textbf{Inner loop: Picard iterations for the integral equation (54):}
    \STATE Initialize $b_n^{(0)}(t_i,y_j)\gets \bar x(y_j)$ for all $(i,j)$ ($n=0$) or with the optimal $b$ from the previous step $(n>0)$.
    \FOR{$k=0,1,\dots,K_{\max}-1$}

        \STATE Precompute cumulative integral of $m^{[n-1]}$ on $\Pi_t$:
        \STATE $M_0\gets 0$ and for $i=1,\dots,\ell_1$ set
        \STATE $M_i \gets M_{i-1} + \tfrac12\bigl(m^{[n-1]}(t_i)+m^{[n-1]}(t_{i-1})\bigr)\Delta t$\hfill $M_i\approx\int_0^{t_i} m^{[n-1]}(u)\,du$

        \FORALL{$(i,j)$}
            \STATE $I_1\gets 0$; \quad $I_2\gets 0$.
            \FOR{$q=0,1,\dots,\ell_1-i$}\hfill $s_q=q\Delta t\in[0,T-t_i]$
                \STATE $s\gets q\Delta t$.
                \STATE $\Delta M \gets M_{i+q}-M_i$\hfill $\Delta M\approx\int_0^{s} m^{[n-1]}(t_i+u)\,du$

                \IF{$s=0$}
                    \STATE $\beta \gets 0$.
                \ELSE
                    \STATE $\beta \gets
                    \dfrac{b_n^{(k)}(t_{i+q},y_j)-b_n^{(k)}(t_i,y_j)-\Delta M}{\sigma\sqrt{s}}$.
                \ENDIF

                \STATE $I_1 \gets I_1 + e^{-rs}\bigl(1-\Phi(\beta)\bigr)\,\Delta t$.
                \STATE $I_2 \gets I_2 + \exp\!\bigl(\Delta M+\tfrac12\sigma^2 s-rs\bigr)\,
                \Phi(\beta-\sigma\sqrt{s})\,\Delta t$.
            \ENDFOR

            \STATE $A \gets (1-e^{-r(T-t_i)}) - r I^{(1)}(t_i, b_n^{(k)}(\cdot,y_j),s;T, r, \sigma)$.\hfill Must be $A>0$
            \STATE Update (Picard) boundary:
            \STATE $b_n^{(k+1)}(t_i,y_j)\gets \log c_0+\log(A)-\log(g'(y_j))-\log(I^{(2)}(t_i, b_n^{(k)}(\cdot,y_j),s;T, r, \sigma)$. 
        \ENDFOR

        \STATE Compute Picard error (discrete version of (56)): $\mathrm{err}\gets \left(\sum_{i,j}\bigl|b_n^{(k+1)}(t_i,y_j)-b_n^{(k)}(t_i,y_j)\bigr|^2\right)^{1/2}$.
        \IF{$\mathrm{err}<\eta$}
            \STATE \textbf{break}
        \ENDIF    
    \ENDFOR

    \STATE Set $b_n(t_i,y_j)\gets b_n^{(k+1)}(t_i,y_j)$ for all $(i,j)$.
\algstore{myalg}
\end{algorithmic}
\end{algorithm}
\end{small}

\begin{small}
\begin{algorithm}[t]
\begin{algorithmic}[1]
\algrestore{myalg}
    \STATE \textbf{Compute generalized inverse $c_n(t,x)=\inf\{y:\,b_n(t,y)>x\}$ on $\Pi_t\times\Pi_x$ via the Python module \texttt{pynverse}}
       \STATE \textbf{Monte Carlo estimate of $m^{[n]}(t)=\mathbb E[Y_t^{[n]\ast}]$:}
    \STATE $m^{[n]}(t_i)\gets 0$ for all $i$.
    \FOR{$p=1,\dots,N_{\mathrm{MC}}$}
        \STATE Sample $(X_0,Y_{0-})$ uniformly from $\Pi_x\times\Pi_y$.
        \STATE $S\gets 0$.\hfill $S$ stores $\sup_{0\le s\le t}(c_n(s,X_s)-Y_{0-})_+$
        \FOR{$i=0,1,\dots,\ell_1$}
            \STATE $c \gets \textsc{Interp}\bigl(x\mapsto c_n(t_i,x),\,X_{t_i}\bigr)$.
            \STATE $S \gets \max\{S,\,(c-Y_{0-})_+\}$;\quad $Y_{t_i}\gets Y_{0-}+S$. 
            \STATE $m^{[n]}(t_i)\gets m^{[n]}(t_i)+Y_{t_i}$.
            \IF{$i<\ell_1$}
                \STATE Draw $Z\sim\mathcal N(0,1)$.
                \STATE $X_{t_{i+1}}\gets X_{t_i}+m^{[n-1]}(t_i)\Delta t+\sigma\sqrt{\Delta t}\,Z$.
            \ENDIF
        \ENDFOR
    \ENDFOR
    \STATE $m^{[n]}(t_i) \gets$ \text{Update the mean-field by Monte Carlo averaging.}
    \STATE \textbf{Outer stopping test (game iteration):}
    \STATE $\mathrm{err}_{\mathrm{game}}\gets \left(\sum_{i,j}\bigl|b_n(t_i,y_j)-b_{n-1}(t_i,y_j)\bigr|^2\right)^{1/2}$.
    \IF{$n\ge 1$ \textbf{and} $\mathrm{err}_{\mathrm{game}}<\eta$}
        \STATE \textbf{return} $(b_n,c_n,m^{[n]})$.
    \ENDIF

\EndFor

\STATE \textbf{return} $(b_n,c_n,m^{[n]})$.
\end{algorithmic}
\end{algorithm}
\end{small}

\newpage
\section{Declaration of generative AI and AI-assisted technologies in the manuscript preparation process.}
\noindent During the preparation of this work the authors used ChatGPT Pro to optimize some numerical routines of the \texttt{Python} code. After using this tool/service, the authors reviewed and edited the content as needed and take full responsibility for the content of the published article.

\end{document}